\documentclass{article}
\usepackage[T1]{fontenc}
\usepackage{amssymb,amsmath,amsthm,enumerate,xr}
\usepackage{algorithm}
\usepackage{algorithmic}
\usepackage{stmaryrd}
\usepackage{dsfont}
\usepackage{version}
\usepackage{subfig}
\usepackage{ifthen}
\usepackage{graphicx,color}
\usepackage{algorithm,algorithmic}
\newcommand{\1}{\ensuremath{\mathbf{1}}}

\theoremstyle{plain} \newtheorem{theorem}{Theorem}[section]
\theoremstyle{plain} \newtheorem{proposition}[theorem]{Proposition}
\theoremstyle{plain} 
\theoremstyle{plain} \newtheorem{lemma}[theorem]{Lemma}
\theoremstyle{remark} \newtheorem{remark}[theorem]{Remark}

\newcounter{hypA}
\newenvironment{hypA}{\refstepcounter{hypA}\begin{itemize}
\item[{\bf H\arabic{hypA}}]}{\end{itemize}}

\newcounter{hypS}
\newenvironment{hypS}{\refstepcounter{hypS}\begin{itemize}
\item[{\bf S\arabic{hypS}}]}{\end{itemize}}

\newcommand{\osc}{\mathrm{osc}}
\newcommand{\rme}{\mathrm{e}}
\newcommand{\eqdef}{\ensuremath{\stackrel{\mathrm{def}}{=}}}

\def\Xset{\mathbb{X}}
\def\Yset{\mathbb{Y}}
\def\Zset{\mathbb{Z}}
\def\sigmaX{\mathcal{X}}
\def\sigmaY{\mathcal{Y}}

\newcommand{\pscal}[2]{\left\langle #1, #2 \right\rangle}
\newcommand{\K}{\mathcal{K}}

\newcommand{\limE}[2]{\mathsf{S}(#1,#2)}
\newcommand{\Rset}{\mathbb{R}}
\newcommand{\Nset}{\mathbb{N}}

\newcommand{\eqsp}{\;}
\newcommand{\jacG}{\Gamma}
\newcommand{\param}{\theta}
\newcommand{\paramset}{\Theta}
\newcommand{\rmd}{\mathrm{d}}
\newcommand{\rmB}{\mathrm{B}}
\newcommand{\rmF}{\mathrm{F}}
\newcommand{\rmL}{\mathrm{L}}
\newcommand{\bfy}{\mathbf{y}}
\newcommand{\bfY}{\mathbf{Y}}

\newcommand{\Expparam}[2]{\mathbb{E}_{#2}\left[#1\right]}

\newcommand{\filter}[4]{\phi_{#3\vert #4}^{#1,#2}}

\newcommand{\PP}{\mathbb{P}}
\newcommand{\Staset}{\mathcal{L}}

\newcommand{\loglik}[2]{\ell_{#2}^{#1}}

\newcommand{\CE}[4][]
{
\ifthenelse{\equal{#1}{}}{\mathbb{E}^{#2}_{#3}\left[#4\right]}{\mathbb{E}^{#2}_{#3}\left[#4\middle | #1\right]}
}

\newcommand{\lpnorm}[2]{\ensuremath{\left\| #1 \right\|_{#2}}} 

\newcommand{\smoothfunc}[2]{\Phi_{#2}^{#1}}
\newcommand{\lyap}{\mathrm{W}}

\newcommand{\ps}[1]{#1\mathrm{-a.s.}}
\newcommand{\limEMmap}{\mathrm{G}}
\newcommand{\limEMmapt}{\mathrm{R}}
\newcommand{\mapS}{\bar{\mathrm{S}}}
\newcommand{\PPim}{\PP}
\newcommand{\Sset}{\mathcal{S}}
\newcommand{\sigmafield}[1]{\mathcal{B}(#1)}

\newcommand{\normTV}[1]{\left |\left | #1 \right| \right |_{\mathrm{TV}}}


\newcommand{\shift}{\vartheta}

\newcommand{\averageparam}[1][]%
{\ifthenelse{\equal{#1}{}}{\ensuremath{\widetilde{\theta}}}{\ensuremath{\widetilde{\theta}}_{#1}}
}

\newcommand{\mix}[1][]%
{\ifthenelse{\equal{#1}{a}}{\alpha}{\beta}
}

\begin{document}
  
  \title{Supplement paper to ``Online Expectation Maximization based algorithms
    for inference in Hidden Markov Models''}
 \author{Sylvain Le Corff\,\footnote{LTCI, CNRS and TELECOM ParisTech, 46 rue Barrault 75634 Paris Cedex 13, France. sylvain.lecorff@telecom-paristech.fr}\;\,\footnote{This
    work is partially supported by the French National Research Agency, under
    the programs ANR-08-BLAN-0218 BigMC and ANR-07-ROBO-0002.}\, and Gersende Fort\,\footnote{LTCI, CNRS and TELECOM ParisTech, 46 rue Barrault 75634 Paris Cedex 13, France. gersende.fort@telecom-paristech.fr}}

\maketitle

This is a supplementary material to the paper~\cite{lecorff:fort:2011}.

It contains technical discussions and/or results adapted from published papers. In Sections~\ref{BOEMsupp:sec:proof:aux} and
\ref{BOEMsupp:sec:forgetting:HMM}, we provide results - useful for the proofs of some 
theorems in~\cite{lecorff:fort:2011} - which are close to existing results in
the literature.
  
It also contains, in Section~\ref{BOEMsupp:sec:appli}, additional plots for the
numerical analyses in \cite[Section~$3$]{lecorff:fort:2011}.

To make this supplement paper as self-contained as possible, we decided to
rewrite in Section~\ref{BOEMsupp:sec:assum} the model and the main definitions
introduced in~\cite{lecorff:fort:2011}.

\section{Assumptions and Model}
\label{BOEMsupp:sec:assum}
Our model is defined as follows. Let $\paramset$ be a compact subset of $\Rset^{d_\theta}$. We are given a family of transition kernels
$\{M_{\param}\}_{\param\in\paramset}$, $M_{\param}:\Xset\times\sigmaX\to[0,1]$, a positive $\sigma$-finite
measure $\mu$ on $(\Yset,\sigmaY)$, and a family of transition densities with respect to $\mu$, $\{g_{\param}\}_{\param\in\paramset}$,  $g_{\param}:\Xset\times\Yset\to\mathbb{R}_+$. It is assumed that, for any $\param\in\paramset$ and any $x\in\Xset$, $M_{\param}(x,\cdot)$ has a density $m_{\param}(x,\cdot)$ with respect to a finite measure $\lambda$ on $(\Xset,\sigmaX)$.
In the setting of this paper, we consider  a single observation path $\bfY\eqdef\{Y_t\}_{t\in\Zset}$ defined on the probability space $\left(\Omega,\mathcal{F},\PPim\right)$ and taking values in $\Yset^{\Zset}$. The following assumptions are assumed to hold.

\begin{hypA} 
\label{BOEMsupp:assum:exp}
  \begin{enumerate}[(a)]
  \item \label{BOEMsupp:assum:exp:decomp}There exist continuous functions $\phi : \paramset \to \Rset$, $\psi : \paramset
    \to \Rset^d$ and $S: \Xset \times \Xset \times \Yset \to \Rset^d$
    s.t.
\begin{equation*}
  \label{BOEMsupp:eq:exponential:family}
  \log m_\param(x,x') + \log g_\param(x',y) = \phi(\param) +
  \pscal{S(x,x,',y)}{\psi(\param)} \eqsp,
\end{equation*}
where $\pscal{\cdot}{\cdot}$ denotes the scalar product on $\Rset^d$. 
\item \label{BOEMsupp:assum:exp:convex}There exists an open subset $\Sset$ of $\Rset^d$
  that contains the convex hull of $S(\Xset \times \Xset \times \Yset)$.
    \item \label{BOEMsupp:assum:exp:max}There exists a continuous function $\bar \param :
  \Sset \to \paramset$ s.t. for any $s \in \Sset$,
\[
\bar \param(s) = \mathrm{argmax}_{\param \in \paramset} \; \left\{ \phi(\param) +
  \pscal{s}{\psi(\param)} \right\} \eqsp.
\]
  \end{enumerate}
\end{hypA}
\begin{hypA}\label{BOEMsupp:assum:strong}
  There exist $\sigma_{-}$ and $\sigma_{+}$ s.t. for any
  $\left(x,x^{\prime}\right)\in\Xset^2$ and any $\param \in \paramset$, $0
  <\sigma_{-} \leq m_\param(x,x^{\prime})\leq \sigma_{+}$.  Set $\rho \eqdef 1 -
  (\sigma_-/\sigma_+)\eqsp.$
\end{hypA}

We now introduce assumptions on the observation process. For any sequence of r.v.  $Z\eqdef\{Z_{t}\}_{t\in\Zset}$ on
$(\Omega,\PPim,\mathcal{F})$, let
\begin{equation*}
\mathcal{F}_{k}^{Z} \eqdef \sigma\left(\{Z_{u}\}_{u\leq k}\right)\quad \mbox{and}\quad \mathcal{G}_{k}^{Z} \eqdef \sigma\left(\{Z_{u}\}_{u\geq k}\right)
\end{equation*}
be $\sigma$-fields associated to $Z$. We also define the mixing coefficients
by, see \cite{davidson:1994},
\begin{equation}
\label{BOEMsupp:eq:def:mixing}
\mix^{\bfY}(n)=\underset{u\in\Zset}{\sup}\,\underset{B\in\mathcal{G}_{u+n}^{\bfY}}{\sup}\,\CE[]{}{}{|\PP(B| \mathcal{F}_{u}^{\bfY}) - \PP(B)|}\eqsp, \forall\; n\geq0\eqsp.
\end{equation} 

\begin{hypA}\label{BOEMsupp:assum:moment:sup} 
\hspace{-0.15cm}-($p$)  $\CE[]{}{}{\sup_{x,x' \in \Xset^2} \, |S(x,x',\bfY_{0})|^{p}}<+\infty$.
\end{hypA}
\begin{hypA}\label{BOEMsupp:assum:obs}
\begin{enumerate}[(a)]
\item \label{BOEMsupp:assum:obs:mix}$\bfY$ is a $\beta$-mixing stationary sequence such that there exist $C\in[0,1)$ and $\mix\in(0,1)$ satisfying, for any $n\geq 0$,
  $\mix^{\bfY}(n) \leq C\mix^{n}$, where $\mix^{\bfY}$ is defined in
  \eqref{BOEMsupp:eq:def:mixing}.
  \item \label{BOEMsupp:assum:obs:b+:b-} $\CE[]{}{}{|\log b_-(\bfY_0)| +|\log b_+(\bfY_0)| }<+\infty$ where 
    \begin{eqnarray}
      b_-(y) & \eqdef  \inf_{\param \in \paramset} \int g_\param(x,y) \lambda(\rmd x)  \eqsp,\label{BOEMsupp:eq:b-}\\
  b_+(y) & \eqdef  \sup_{\param \in \paramset} \int g_\param(x,y) \lambda(\rmd x) \eqsp.\label{BOEMsupp:eq:b+}
    \end{eqnarray}
\end{enumerate}
\end{hypA}

\begin{hypA}\label{BOEMsupp:assum:size-block}
There exists $c>0$ and $a>1$ such that for all $n\ge 1$, $\tau_{n} = \lfloor cn^{a}\rfloor$.
\end{hypA}

Recall the following definition from \cite{lecorff:fort:2011}: for a
distribution $\chi$ on $(\Xset,\sigmafield{\Xset})$, positive integers
$T,\tau$ and $\param \in \paramset$, set
\begin{equation}
  \label{BOEMsupp:eq:rewrite:barS}
  \bar S_{\tau}^{\chi,T}(\param, \bfY) \eqdef
\frac{1}{\tau} \sum_{t=T+1}^{T+\tau}\smoothfunc{\chi,T}{\param,t,T+\tau}\left(
  S, \bfY\right)\eqsp,
\end{equation}
where $S$ is the function given by
H\ref{BOEMsupp:assum:exp}\eqref{BOEMsupp:assum:exp:decomp} and
\begin{multline}
\label{BOEMsupp:eq:define-Phi:theta}
\smoothfunc{\chi,r}{\param,s,t}(S,\bfy)  \\
\eqdef \frac{\int \chi(\rmd x_r) \{ \prod_{i=r}^{t-1} m_\param(x_i,
  x_{i+1})g_\param(x_{i+1},y_{i+1}) \} \, S(x_{s-1},x_{s},y_{s}) \, 
  \lambda(\rmd x_{r+1:t})}{\int \chi(\rmd x_r) \{\prod_{i=r}^{t-1} m_\param(x_i,
  x_{i+1})g_\param(x_{i+1},y_{i+1}) \}\, \lambda(\rmd x_{r+1:t})}\eqsp.
\end{multline}
We also write $S_{n-1}\eqdef \bar S_{\tau_n}^{\chi_{n-1},T_{n-1}}(\param_{n-1}, \bfY)$ the intermediate quantity computed by the BOEM algorithm in block $n$ and $\widetilde S_{n-1}$ the associated Monte Carlo approximation.

\begin{hypA}\hspace{-0.1cm}-($p$)\label{BOEMsupp:assum:SMCapprox}
There exists $b\ge (a+1)/2a$ (where $a$ is defined in H\ref{BOEMsupp:assum:size-block}) such that, for any $n\ge 0$,
 \[
  \lpnorm{S_{n} - \widetilde S_{n}}{p} = O(\tau_{n+1}^{-b})\eqsp,
  \]
  where $ \widetilde S_{n}$ is the Monte Carlo approximation of $S_{n}$.
\end{hypA}

Define for any $\param \in \paramset$,
\begin{equation}
\label{BOEMsupp:eq:mapS}
\mapS(\param) \eqdef \CE[]{}{}{\limE{\param}{\bfY}}\eqsp.
\end{equation}
\begin{equation}
  \label{BOEMsupp:eq:EM:algorithm}
\limEMmapt(\param) \eqdef  \bar \param \left( \mapS(\param)\right) \eqsp.
\end{equation}
\begin{equation}
 \label{BOEMsupp:eq:mapping:limitingEM}
 \limEMmap(s) \eqdef \mapS(\bar\param(s))
 \eqsp, \qquad \forall s\in\Sset \eqsp,
\end{equation}
where $\bar \param$ is given by
H\ref{BOEMsupp:assum:exp}\eqref{BOEMsupp:assum:exp:max}.

\begin{hypA}\label{BOEMsupp:assum:stable:fixpoint}
  \begin{enumerate}[(a)]
  \item $\mapS$ and $\bar\param$ are twice continuously differentiable on $\paramset$ and $\Sset$.
  \item There exists $0<\gamma<1$ s.t. the spectral radius of $\nabla_{s}(\mapS\circ\bar\param)_{s=\mapS(\param_{\star})}$ is lower than  $\gamma$.
  \end{enumerate}
 \end{hypA} 
Set
\[
T_n \eqdef \sum_{i=1}^n \tau_i \eqsp, \qquad T_0 \eqdef 0 \eqsp.
\]

\section{Detailed proofs of \cite{lecorff:fort:2011}}
\label{BOEMsupp:sec:proof:aux}
\subsection{Proof of \cite[Theorem~$4.4$]{lecorff:fort:2011}}
\label{BOEMsupp:sec:th:bonem:conv}
\begin{proof}
By \cite[Proposition~A.$1$]{lecorff:fort:2011}, it is sufficient to prove that
\begin{equation}
\label{BOEMsupp:eq:conv1}
\left|\lyap\circ\limEMmapt(\param_{n})-\lyap\circ\bar\param (\widetilde S_{n})\right|\underset{n\to +\infty}{\longrightarrow}0\eqsp,\quad\ps{\PPim}\eqsp.
\end{equation}
By Theorem~$4.1$, the function $\mapS$ given by \eqref{BOEMsupp:eq:mapS} is
continuous on $\paramset$ and then $\mapS(\paramset)\eqdef\{s\in\Sset; \exists
\param\in\paramset, s = \mapS(\param)\}$ is compact and, for any $\delta>0$, we can define the compact subset $\mapS(\paramset,\delta) \eqdef
\left\{s\in\Rset^{d}; \rmd(s,\mapS(\paramset))\leq \delta\right\}$ of $\Sset$, where
$\rmd(s,\mapS(\paramset)) \eqdef \inf_{s'\in\mapS(\paramset)}\left|s-s'\right|$. Let
$\delta>0$ (small enough) and $\varepsilon>0$.  Since $\lyap\circ\bar\param$ is
continuous (see H\ref{BOEMsupp:assum:exp}\eqref{BOEMsupp:assum:exp:max} and
\cite[Proposition~$4.2$]{lecorff:fort:2011}) and $\mapS(\paramset,\delta)$ is compact,
$\lyap\circ\bar\param$ is uniformly continuous on $\mapS(\paramset,\delta)$ and there
exists $\eta>0$ s.t.,
\begin{equation}
\label{eq:onestep:conv}
\forall x,y\in\mapS(\paramset,\delta)\eqsp,\quad |x-y|\leq \eta\Rightarrow |\lyap\circ\bar \param(x)-\lyap\circ\bar \param(y)|\leq \varepsilon\eqsp.
\end{equation}
Set $\alpha \eqdef \delta\wedge \eta$ and $\Delta S_n \eqdef
|\mapS(\param_{n})-\widetilde S_{n}|$. We write,
\begin{align*}
  \PPim\left\{\left|\lyap\circ\bar\param (\mapS(\param_{n}))-\lyap\circ\bar\param (\widetilde S_{n})\right|\geq \varepsilon\right\}&\\
  &\hspace{-4cm}=\PPim\left\{\left|\lyap\circ\bar\param (\mapS(\param_{n}))-\lyap\circ\bar\param (\widetilde S_{n})\right|\geq \varepsilon; \Delta S_n> \delta\right\}\\
  &\hspace{-1.5cm}+\PPim\left\{\left|\lyap\circ\bar\param (\mapS(\param_{n}))-\lyap\circ\bar\param (\widetilde S_{n})\right|\geq \varepsilon; \Delta S_n \leq \delta\right\}\\
  &\hspace{-4cm}\leq \PPim\left\{\Delta S_n > \delta\right\} + \PPim\left\{ \Delta S_n >
    \eta\right\} \leq 2\PPim\left\{ \Delta S_n > \alpha\right\}\eqsp.
\end{align*}
By the Markov inequality and \cite[Theorem~$4.1$]{lecorff:fort:2011}, for all $p\in (2,\bar p)$, there exists a constant $C$ s.t.
\begin{equation*}
  \PPim\left\{\left|\lyap\circ\bar\param (\mapS(\param_{n}))-\lyap\circ\bar\param (\widetilde S_{n})\right|\geq \varepsilon\right\}\leq \frac{2}{\alpha^{p}}\CE[]{}{}{|\mapS(\param_{n})-\widetilde S_{n}|^{p}} \leq C\tau_{n+1}^{-p/2}\eqsp.
\end{equation*}
\eqref{BOEMsupp:eq:conv1} follows from H\ref{BOEMsupp:assum:size-block} and the Borel-Cantelli lemma (since $p>2$ and $a>1$). 
\end{proof}
Proposition~\ref{BOEMsupp:prop:bonEM:stat} shows that we can address equivalently the convergence of the statistics $\{ \widetilde S_{n}\}_{n\geq 0}$ to some fixed point of $\limEMmap$ and the convergence of the sequence $\{\param_{n}\}_{n\geq 0}$ to some fixed point of $\limEMmapt$.

\begin{proposition}
\label{BOEMsupp:prop:bonEM:stat}
Assume H\ref{BOEMsupp:assum:exp}-\ref{BOEMsupp:assum:strong}, H\ref{BOEMsupp:assum:moment:sup}-($\bar
p$), H\ref{BOEMsupp:assum:obs}\eqref{BOEMsupp:assum:obs:mix}, H\ref{BOEMsupp:assum:size-block} and H\ref{BOEMsupp:assum:SMCapprox}-($\bar p$) for
some $\bar p>2$.
\begin{enumerate}[(i)]
\item \label{BOEMsupp:prop:bonEM:stat:item1} Let $\param_{\star}\in\Staset$.  Set
  $s_\star \eqdef \mapS(\param_\star) = \limEMmap(s_\star)$. Then, $\ps{\PPim}$,
\[  \underset{n\to +\infty}{\lim} \left|\widetilde
  S_{n}-
  s_{\star}\right|\1_{\lim_{n}\param_{n}=\param_{\star}} = 0 \eqsp.
\]
\item \label{BOEMsupp:prop:bonEM:stat:item2} Let $s_\star \in \mathcal{S}$ s.t.
  $\limEMmap(s_\star) = s_\star$.  Set $\param_\star \eqdef \bar \param(s_\star) = \limEMmapt(\param_\star)$. Then $\ps{\PPim}$,
\[
\underset{n\to +\infty}{\lim} \left|\param_n -
  \param_{\star}\right|\1_{\lim_{n} \widetilde
  S_{n} =s_{\star}} = 0 \eqsp.
\]
\end{enumerate}
\end{proposition}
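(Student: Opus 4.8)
The plan is to reduce both items to two ingredients: the defining recursion of the BOEM algorithm, $\param_{n+1}=\bar\param(\widetilde S_{n})$, and the fact that the per-block error $\widetilde S_{n}-\mapS(\param_{n})$ vanishes $\PPim$-a.s. Once these are in hand, item (i) follows by composing with the continuous map $\mapS$, and item (ii) by composing with the continuous map $\bar\param$.

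First I would establish that $\widetilde S_{n}-\mapS(\param_{n})\to 0$ $\PPim$-a.s. By \cite[Theorem~4.1]{lecorff:fort:2011}, combined with H\ref{BOEMsupp:assum:SMCapprox}-($\bar p$) to control the Monte Carlo contribution (which is of smaller order, since $b\ge (a+1)/(2a)>1/2$ for $a>1$), for every $p\in(2,\bar p)$ there is a finite constant $C$, independent of $n$, such that $\CE[]{}{}{|\widetilde S_{n}-\mapS(\param_{n})|^{p}}\le C\,\tau_{n+1}^{-p/2}$. Since $\tau_{n}=\lfloor cn^{a}\rfloor$ with $a>1$ and $p>2$, the series $\sum_{n}\tau_{n+1}^{-p/2}$ converges, so Markov's inequality and the Borel--Cantelli lemma give $\widetilde S_{n}-\mapS(\param_{n})\to 0$ on a $\PPim$-full event $\Omega_{0}$.

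For item (i): on $\Omega_{0}\cap\{\lim_{n}\param_{n}=\param_{\star}\}$, continuity of $\mapS$ on the compact set $\paramset$ (again \cite[Theorem~4.1]{lecorff:fort:2011}) yields $\mapS(\param_{n})\to\mapS(\param_{\star})=s_{\star}$, whence $\widetilde S_{n}\to s_{\star}$; on the complement of $\{\lim_{n}\param_{n}=\param_{\star}\}$ the indicator is identically zero. This proves $|\widetilde S_{n}-s_{\star}|\1_{\lim_{n}\param_{n}=\param_{\star}}\to 0$ $\PPim$-a.s. For item (ii), which is a pure continuity argument not even requiring the second paragraph: on $\{\lim_{n}\widetilde S_{n}=s_{\star}\}$ the iterates $\widetilde S_{n}$ eventually lie in any prescribed neighbourhood of $s_{\star}\in\Sset$, and $\bar\param$ is continuous on $\Sset$ (see H\ref{BOEMsupp:assum:exp}\eqref{BOEMsupp:assum:exp:max} and \cite[Proposition~4.2]{lecorff:fort:2011}); applying this to $\param_{n+1}=\bar\param(\widetilde S_{n})$ gives $\param_{n+1}\to\bar\param(s_{\star})=\param_{\star}$, i.e.\ $|\param_{n}-\param_{\star}|\1_{\lim_{n}\widetilde S_{n}=s_{\star}}\to 0$ $\PPim$-a.s.

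The only substantive step is the second paragraph, i.e.\ obtaining an $L^{p}$ bound on $\widetilde S_{n}-\mapS(\param_{n})$ that is summable in $n$; I expect this to be the main obstacle, but it is precisely what \cite[Theorem~4.1]{lecorff:fort:2011} delivers. The delicate point there is that its constant must be uniform in $n$ and uniform over the random iterates $\param_{n}\in\paramset$, which is where compactness of $\paramset$ and the uniform minorization/mixing bound H\ref{BOEMsupp:assum:strong} enter (through the exponential forgetting of the filter). Everything else is continuity of $\mapS$ and $\bar\param$ together with Borel--Cantelli, and the identifications $\mapS(\param_{\star})=s_{\star}=\limEMmap(s_{\star})$ and $\bar\param(s_{\star})=\param_{\star}=\limEMmapt(\param_{\star})$ that are built into the statement.
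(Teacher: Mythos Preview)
Your proposal is correct and follows essentially the same route as the paper: establish $\widetilde S_{n}-\mapS(\param_{n})\to 0$ $\PPim$-a.s.\ from the $L^{p}$ bound of \cite[Theorem~4.1]{lecorff:fort:2011} together with H\ref{BOEMsupp:assum:size-block} and Borel--Cantelli, then use continuity of $\mapS$ for item~(i) and continuity of $\bar\param$ applied to the BOEM update $\param_{n+1}=\bar\param(\widetilde S_{n})$ for item~(ii). The paper's proof is terser (it cites Theorem~4.1 directly for the a.s.\ limit, the Markov/Borel--Cantelli argument having already appeared in the preceding proof of \cite[Theorem~4.4]{lecorff:fort:2011}), but the logic is identical.
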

 \begin{proof}
  Let $\mapS$ be given by \eqref{BOEMsupp:eq:mapS}. By \cite[Theorem~$4.1$]{lecorff:fort:2011} and H\ref{BOEMsupp:assum:size-block}, 
\[
\lim_n \left(\widetilde S_{n}-\mapS(\param_n) \right) =0\quad\ps{\PPim}
  \]
  By \cite[Theorem~$4.1$]{lecorff:fort:2011},
$\mapS$ is continuous. Hence, 
\[
\lim_n \left|\widetilde
  S_{n} - \mapS(\param_\star) \right|
\1_{\lim_n \param_n = \param_\star} =0\quad\ps{\PPim}
\] and the proof of
\eqref{BOEMsupp:prop:bonEM:stat:item1} follows. Since $\bar \param$ is continuous,
\eqref{BOEMsupp:prop:bonEM:stat:item2} follows.
 \end{proof}
 
\subsection{Proof of \cite[Proposition~$6.2$]{lecorff:fort:2011}}
\label{BOEMsupp:sec:prop:mu:rho:rate}
We start with rewriting some definitions and assumptions introduced in
\cite{lecorff:fort:2011}.  Define the sequences $\mu_n$ and $\rho_n$, $n \geq 0$
by $\mu_{0} = 0$, $\rho_{0} = \widetilde S_{0}-s_{\star}$ and
\begin{equation}
  \mu_{n} \eqdef \jacG\mu_{n-1} + e_n \eqsp, \qquad \rho_{n} \eqdef
  \widetilde S_{n}-s_{\star} - \mu_{n} \label{BOEMsupp:eq:def:mu} \eqsp,  \qquad n\geq 1 \eqsp,
\end{equation}
where, $\Gamma \eqdef \nabla \limEMmap (s_{\star})$,
\begin{equation}
\label{BOEMsupp:eq:define:epsilon}
e_{n}\eqdef \widetilde S_{n}-\mapS(\param_n)\eqsp, \qquad  n\geq 1 \eqsp,
\end{equation} 
and $\mapS$ is given by (\ref{BOEMsupp:eq:mapS}).

\begin{proof}
Let $p\in (2,\bar p)$. By (\ref{BOEMsupp:eq:def:mu}), for all $n\geq 1$, $\mu_{n}
= \sum_{k=0}^{n-1}\jacG^{k}e_{n-k}$.  By H\ref{BOEMsupp:assum:stable:fixpoint} and
the Minkowski inequality, for all $n\geq 1$, $ \lpnorm{\mu_{n}}{p} \leq
\sum_{k=0}^{n-1}\gamma^{k}\lpnorm{e_{n-k}}{p}$.  By \cite[Theorem~$4.1$]{lecorff:fort:2011}, there exists a
constant $C$ s.t. for any $n\geq 1$,
\[
\lpnorm{\mu_{n}}{p} \leq C\sum_{k=0}^{n-1}\gamma^{k}
\sqrt{\frac{1}{\tau_{n+1-k}}}\eqsp.
\]
By \cite[Result 178, p. 39]{polya:1976} and H\ref{BOEMsupp:assum:size-block}, this yields $\sqrt{\tau_n} \mu_n
= O_{\rmL_{p}}(1)$. 

By H\ref{BOEMsupp:assum:stable:fixpoint}, using a Taylor expansion with integral
form of the remainder term,
\begin{align*}
  \limEMmap(\widetilde S_{n-1})- \limEMmap(s_{\star})&
  -\jacG\left(\widetilde S_{n-1}-s_{\star}\right)
  \\
  &=\sum_{i,j = 1}^{d}\left(\widetilde S_{n-1,i}-s_{\star,i}\right)\left(\widetilde S_{n-1,j}-s_{\star,j}\right)R_{n-1}(i,j)\\
  &=\sum_{i,j =
    1}^{d}(\mu_{n-1,i}+\rho_{n-1,i})(\mu_{n-1,j}+\rho_{n-1,j})R_{n-1}(i,j)\eqsp,
\end{align*}
where $x_{n,i}$ denotes the $i$-th component of $x_{n}\in\Rset^{d}$ and
\[
R_{n}(i,j)\eqdef \int_{0}^{1}(1-t)\frac{\partial^{2} \limEMmap}{\partial
  s_{i}\partial s_{j}}\left(s_{\star}+t(\widetilde S_{n}-s_{\star})\right)\rmd t\eqsp,
\qquad n \in \Nset, 1\leq i,j\leq d \eqsp.
\]
Observe that under H\ref{BOEMsupp:assum:stable:fixpoint}, $\limsup_n |R_n| \1_{\lim_n
  \param_n = \param_\star} < \infty$ w.p.1.  Define for $n \geq 1$ and $k \leq
n$,
\begin{align}
\label{BOEMsupp:eq:def:H-r}
H_{n} &\eqdef \sum_{i=1}^{d}(2\mu_{n,i}+\rho_{n,i})R_{n}(i,\cdot)\eqsp,\quad & r_{n} \eqdef \sum_{i,j=1}^{d}R_{n}(i,j)\mu_{n,i}\mu_{n,j}\eqsp, \\
\psi(n,k) & \eqdef (\Gamma+H_{n})\cdots(\Gamma+H_{k})\eqsp,
\end{align}
with the convention $\psi(n,n+1) \eqdef \mathrm{Id}$.  By \eqref{BOEMsupp:eq:def:mu},
\begin{equation}
\label{BOEMsupp:eq:rho-dec}
\rho_{n} = \psi(n-1,0)\rho_{0} +  \sum_{k=0}^{n-1}\psi(n-1,k+1)r_{k}\eqsp.
\end{equation}
Since $ \sqrt{\tau_n} \mu_n = O_{\rmL_{p}}(1)$, H\ref{BOEMsupp:assum:size-block} and $p>2$ imply that $\mu_{n}\underset{n\to +\infty}{\longrightarrow} 0$, $\ps{\PPim}$ Then, by \eqref{BOEMsupp:eq:def:mu},
$\rho_{n}\1_{\lim_n
  \param_n = \param_\star}\underset{n\to +\infty}{\longrightarrow}
0$, $\ps{\PPim}$ and by \eqref{BOEMsupp:eq:def:H-r} $\underset{n\to
  +\infty}{\lim}\left|H_{n}\right|\1_{\lim_n
  \param_n = \param_\star} = 0$, $\ps{\PPim}$  Let $\widetilde{\gamma}\in (\gamma,1)$, where $\gamma$ is
given by H\ref{BOEMsupp:assum:stable:fixpoint}. Since $\underset{n\to
  +\infty}{\lim}\left|H_{n}\right|\1_{\lim_n
  \param_n = \param_\star} = 0$, there
exists a $\ps{\PPim}$ finite random variable $Z_{1}$ s.t., for all $0\leq
k \leq n-1$,
\begin{equation}
\label{eq:hurwitz}
\left|\psi(n-1,k)\right|\1_{\lim_n
  \param_n = \param_\star}\leq \widetilde{\gamma}^{n-k}Z_{1}\1_{\lim_n
  \param_n = \param_\star}\eqsp.
\end{equation}
Therefore, $\left|\psi(n-1,0)\rho_{0}\right|\1_{\lim_n
  \param_n = \param_\star}\leq
\widetilde{\gamma}^{n} Z_1 \,\left|\rho_{0}\right|\quad \ps{\PPim}$, and, by
H\ref{BOEMsupp:assum:moment:sup}-($\bar p$),
\eqref{BOEMsupp:eq:rewrite:barS}, \eqref{BOEMsupp:eq:define-Phi:theta}, $\CE[]{}{}{|\rho_{0}|^{\bar p}}<+\infty$ which implies that
$\rho_{0}<+\infty$ $\ps{\PPim}$ Since $\widetilde{\gamma}<1$, the first term
in the RHS of (\ref{BOEMsupp:eq:rho-dec}) is $\tau_n^{-1} O_{\rmL_{p}}(1)
O_{\mathrm{a.s}}(1)$.

We now consider the second term in the RHS of \eqref{BOEMsupp:eq:rho-dec}. From equation \eqref{eq:hurwitz}, 
\[
\left|\sum_{k=0}^{n-1}\psi(n-1,k+1)r_{k}\right|\1_{\lim_n
  \param_n = \param_\star}\leq Z_{1}\;\sum_{k=0}^{n-1}\widetilde{\gamma}^{n-k-1}\left|r_{k}\right|\1_{\lim_{n}S_{n}=s_{\star}}\eqsp,\quad\ps{\PPim}
\]
By \eqref{BOEMsupp:eq:def:H-r} and H\ref{BOEMsupp:assum:stable:fixpoint}, there exists a
$\ps{\PPim}$ finite random variable $Z_{2}$ s.t.
\[
\left|r_{k}\right|\1_{\lim_n
  \param_n = \param_\star}\leq
Z_{2}\sum_{i,j=1}^{d}\mu_{k,i}\mu_{k,j}\eqsp,\quad\ps{\PPim}  
\]
In addition, since $
\sqrt{\tau_n} \mu_n = O_{\rmL_{p}}(1)$, there exists a constant $C$ s.t. 
\[
\lpnorm{\sum_{k=0}^{n-1}\widetilde{\gamma}^{n-k-1}\sum_{i,j=1}^{d}\mu_{k,i}\mu_{k,j}}{p/2}\leq
C\;\sum_{k=0}^{n-1}\frac{\widetilde{\gamma}^{n-k-1}}{\tau_{k}}\eqsp.
\]
Applying again \cite[Result 178, p. 39]{polya:1976} yields that the second term in the RHS of
(\ref{BOEMsupp:eq:rho-dec}) is $\tau_n^{-1} O_{\mathrm{a.s}}(1) O_{\rmL_{p/2}}(1)$.
\end{proof}

\section{General results on HMM}
\label{BOEMsupp:sec:forgetting:HMM}
In this section, we derive results on the forgetting properties of HMM
(Section~\ref{BOEMsupp:sec:borw:back}), on their applications to bivariate smoothing distributions (Section~\ref{BOEMsupp:sec:bivariate:smoothing}), 
on the asymptotic behavior of the normalized log-likelihood
(Section~\ref{BOEMsupp:sec:limiting:loglikeli}) and on the normalized score
(Section~\ref{BOEMsupp:sec:normalized:score}).

For any sequence $\bfy\in\Yset^{\Zset}$
and any function $h: \Xset^{2}\times\Yset \to \Rset$, denote by $h_{s}$ the
function on $\Xset^{2} \to \Rset$ given by
\begin{equation}
\label{eq:definition:fonction:hs}
h_{s} (x,x^{\prime}) \eqdef  h(x,x^{\prime},y_{s})\eqsp.
\end{equation}

 \subsection{Forward and Backward forgetting}
\label{BOEMsupp:sec:borw:back} 
In this section, the dependence on $\param$ is dropped from the notation for better clarity. For any $s \in \Zset$ and any $A\in\sigmaX$, define
\begin{equation}\label{eq:kertnel:L}
L_{s}(x,A) \eqdef  \int m(x,x^{\prime})g(x^{\prime},y_{s+1})\1_{A}(x^{\prime})\lambda(\rmd x^{\prime})\eqsp,
\end{equation}
and, for any $s \leq t$ denote by $L_{s:t}$ the composition of the
kernels defined by 
\[  
L_{s:s} \eqdef L_{s} \eqsp, \qquad 
L_{s:u+1}(x,A) \eqdef \int L_{s:u}(x,\rmd
x^{\prime})L_{u+1}(x^{\prime},A)\eqsp.
\]
By convention, $L_{s:s-1}$ is the identity kernel:
$L_{s:s-1}(x,A)=\delta_x(A)$.  For any $\bfy\in\Yset^{\mathbb{Z}}$, any probability distribution $\chi$ on
$\left(\Xset,\sigmaX\right)$ and for any integers such that $r\leq s < t$, let us define two
Markov kernels on $(\Xset,\sigmaX)$ by
\begin{align}
  \rmF_{s,t}(x,A) & \eqdef  \frac{\int L_{s}(x,\rmd x_{s+1})1_A(x_{s+1})L_{s+1:t-1}(x_{s+1},\Xset)}{L_{s:t-1}(x,\Xset)} \eqsp,\label{BOEMsupp:eq:forward-kernel}\\
  \rmB_{s}^{\chi,r}(x,A) & \eqdef \frac{\int \filter{\chi}{r}{s}{r:s}(\rmd
    x_s)1_A(x_s)m(x_{s},x)}{\int \filter{\chi}{r}{s}{r:s}(\rmd
    x_s)m(x_{s},x)}\label{eq:backward-kernel}\eqsp,
\end{align}
where 
\[
\filter{\chi}{r}{s}{r:s}(A) \eqdef \frac{\int \chi(\rmd x_r)L_{r:s-1}(x_r,\rmd
  x_s)1_A(x_s)}{\int \chi(\rmd x_r)L_{r:s-1}(x_r,\Xset)}\eqsp.
\]
Finally, the Dobrushin coefficient of a Markov kernel
$F:\left(\Xset,\sigmaX\right) \longrightarrow [0,1]$ is defined by:
\begin{equation*}
\delta(F) \eqdef \frac{1}{2}\underset{\left(x,x^{\prime}\right)\in\Xset^2}{\sup}\left|\left|F(x,\cdot) - F(x^{\prime},\cdot)\right|\right|_{\mathrm{TV}}\eqsp.
\end{equation*}

\begin{lemma}\label{BOEMsupp:prop:smoothing-kernels}
  Assume that there exist positive numbers $\sigma_-, \sigma_+$ such that
  $\sigma_- \leq m(x,x') \leq \sigma_+$ for any $x,x' \in \Xset$. Then for any
  $\bfy \in \Yset^\Zset$, $\delta(\rmF_{s,t}) \leq \rho$ and
  $\delta(\rmB_{s}^{\chi,r}) \leq \rho$ where $\rho \eqdef \sigma_-/\sigma_+$.
\end{lemma}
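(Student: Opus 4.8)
The plan is to prove the two bounds by a uniform minorization argument, which is the standard route to bounding Dobrushin coefficients for kernels built from a transition density that is bounded above and below. Recall that for any Markov kernel $F$ on $(\Xset,\sigmaX)$ one has $\delta(F) = 1 - \inf_{x,x'} \sup_{\text{partitions}} \sum_i \min(F(x,A_i),F(x',A_i))$, so it suffices to exhibit, for each kernel, a fixed nonnegative measure $\nu$ with $\nu(\Xset) \geq 1-\rho$ such that $F(x,\cdot) \geq \nu(\cdot)$ uniformly in the starting point $x$. Producing such a $\nu$ is where the hypothesis $\sigma_- \leq m(x,x') \leq \sigma_+$ enters.

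First I would treat the forward kernel $\rmF_{s,t}$. Writing out \eqref{BOEMsupp:eq:forward-kernel} and using $L_s(x,\rmd x_{s+1}) = m(x,x_{s+1})g(x_{s+1},y_{s+1})\lambda(\rmd x_{s+1})$, the numerator is $\int_A m(x,x_{s+1}) g(x_{s+1},y_{s+1}) L_{s+1:t-1}(x_{s+1},\Xset)\,\lambda(\rmd x_{s+1})$ and the denominator is the same integral over $A = \Xset$. Bounding $m(x,x_{s+1}) \geq \sigma_-$ in the numerator and $m(x,x_{s+1}) \leq \sigma_+$ in the denominator gives
\[
\rmF_{s,t}(x,A) \;\geq\; \frac{\sigma_-}{\sigma_+}\cdot\frac{\int_A g(x_{s+1},y_{s+1}) L_{s+1:t-1}(x_{s+1},\Xset)\,\lambda(\rmd x_{s+1})}{\int_\Xset g(x_{s+1},y_{s+1}) L_{s+1:t-1}(x_{s+1},\Xset)\,\lambda(\rmd x_{s+1})}\;=\;(1-\rho)\,\nu_{s,t}(A),
\]
where $\nu_{s,t}$ is a probability measure not depending on $x$ and $\rho = 1-\sigma_-/\sigma_+$. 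Hence $\delta(\rmF_{s,t}) \leq \rho$. The backward kernel $\rmB_s^{\chi,r}$ is handled identically: from \eqref{eq:backward-kernel}, bound $m(x_s,x) \geq \sigma_-$ in the numerator and $m(x_s,x) \leq \sigma_+$ in the denominator to get $\rmB_s^{\chi,r}(x,A) \geq (1-\rho)\,\phi_{s|r}^{\chi,r}(A)$, which is again a $(1-\rho)$-minorization by an $x$-independent probability measure.

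The only genuine point requiring care is the reduction from a uniform minorization $F(x,\cdot) \geq (1-\rho)\nu(\cdot)$ to the bound $\delta(F) \leq \rho$; this is classical (see e.g. the coupling characterization of the Dobrushin coefficient), and I would just cite it, noting that $\sup_{x,x'}\|F(x,\cdot)-F(x',\cdot)\|_{\mathrm{TV}} \leq 2\big(1 - \inf_x F(x,\Xset \text{ restricted to the Hahn set}) \big)$ collapses to $2\rho$ once a common minorant of total mass $1-\rho$ exists. A secondary technical nuisance is checking that all the denominators appearing are strictly positive and finite so the kernels are well defined — this follows from $\sigma_- > 0$ together with the finiteness of $\lambda$ and the integrability built into the construction, and I would dispatch it in one line. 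The main obstacle, such as it is, is purely bookkeeping: making sure the $g$-factors and the tail kernel $L_{s+1:t-1}(\cdot,\Xset)$ cancel cleanly between numerator and denominator so that what remains really is a probability measure independent of $x$.
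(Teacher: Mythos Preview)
Your argument is correct and is essentially the same as the paper's: both proofs establish the uniform minorization $\rmF_{s,t}(x,\cdot)\geq(\sigma_-/\sigma_+)\,\nu_{s,t}(\cdot)$ and $\rmB_s^{\chi,r}(x,\cdot)\geq(\sigma_-/\sigma_+)\,\filter{\chi}{r}{s}{r:s}(\cdot)$ by bounding $m$ below in the numerator and above in the denominator, then invoke the standard fact that a $(1-\rho)$-minorization gives $\delta(F)\leq\rho$. The paper simply cites \cite[Lemma 4.3.13]{cappe:moulines:ryden:2005} for that last step, whereas you sketch it; either is fine. (Note also that the lemma's displayed definition $\rho\eqdef\sigma_-/\sigma_+$ is a typo for $\rho=1-\sigma_-/\sigma_+$, consistent with H\ref{BOEMsupp:assum:strong} and with the minorization actually proved; you are using the correct value.)
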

\begin{proof}
  Let $r$, $s$, $t$ be such that $r\leq s < t$. Under the stated assumptions, 
\begin{multline*}
  \int L_{s}(x_s,\rmd x_{s+1})1_A(x_{s+1})L_{s+1:t-1}(x_{s+1},\Xset) \\
  \geq\sigma_-\int g(x_{s+1},y_{s+1})1_A(x_{s+1})L_{s+1:t-1}(x_{s+1},\Xset)\lambda(\rmd x_{s+1})
\end{multline*}
and
\[
L_{s:t-1}(x_{s},\Xset)\leq\sigma_+\int
g(x_{s+1},y_{s+1})L_{s+1:t-1}(x_{s+1},\Xset)\lambda(\rmd x_{s+1})\eqsp.
\]
This yields to
\begin{equation*}
\rmF_{s,t}(x_s,A) \geq \frac{\sigma_-}{\sigma_+}\frac{\int g(x_{s+1},y_{s+1})L_{s+1:t-1}(x_{s+1},\Xset)\1_A(x_{s+1})\lambda(\rmd x_{s+1})}{\int g(x_{s+1},y_{s+1})L_{s+1:t-1}(x_{s+1},\Xset)\lambda(\rmd x_{s+1})}\eqsp.
\end{equation*}
Similarly, the assumption implies
\begin{equation*}
\rmB_{s}^{\chi,r}(x_{s+1},A) \geq \frac{\sigma_-}{\sigma_+}\filter{\chi}{r}{s}{r:s}(A)\eqsp,
\end{equation*}
which gives the upper bound for the Dobrushin coefficients, see
\cite[Lemma 4.3.13]{cappe:moulines:ryden:2005}.
\end{proof}

\begin{lemma}\label{BOEMsupp:lem:back-forget}
  Assume that there exist positive numbers $\sigma_-, \sigma_+$ such that
  $\sigma_- \leq m(x,x') \leq \sigma_+$ for any $x,x' \in \Xset$. Let $\bfy \in
  \Yset^\Zset$. 
  \begin{enumerate}[(i)]
  \item \label{BOEMsupp:eq:forget-chi} for any bounded function $h$, any probability distributions $\chi$ and
    $\widetilde{\chi}$ and any integers $r\leq s \leq t$
\begin{multline}
  \left |\frac{\int  \chi(\rmd x_r)L_{r:s-1}(x_r,\rmd x_s)h(x_s)L_{s:t-1}(x_s,\Xset)}{\int  \chi(\rmd x_r)L_{r:t-1}(x_r,\Xset)}\right.\\
  \hspace{1cm}\left.- \frac{\int \widetilde{\chi}(\rmd
      x_r)L_{r:s-1}(x_r,\rmd x_s)h(x_s)L_{s:t-1}(x_s,\Xset)}{\int
      \widetilde{\chi}(\rmd x_r)L_{r:t-1}(x_r,\Xset)} \right |
  \leq \rho^{s-r}\osc(h)\eqsp,
\end{multline}
\item \label{BOEMsupp:eq:forget-f} for any bounded function $h$, for any non-negative functions $f$ and
  $\widetilde{f}$ and any integers $r\leq s \leq t$
\begin{multline}
  \left | \frac{\int \chi(\rmd x_s)h(x_s)L_{s:t-1}(x_s,\rmd x_t)f(x_t)}{\int \chi(\rmd x_s)L_{s:t-1}(x_s,\rmd x_t)f(x_t)}\right. \\
  \hspace{3cm}\left.- \frac{\int \chi(\rmd x_s)h(x_s)L_{s:t-1}(x_s,\rmd
      x_t)\widetilde{f}(x_t)}{\int \chi(\rmd x_s)L_{s:t-1}(x_s,\rmd
      x_t)\widetilde{f}(x_t)} \right | \leq \rho^{t-s}\osc(h)\eqsp.
\end{multline}  \end{enumerate}
\end{lemma}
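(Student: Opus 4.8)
The strategy is to recognize both quantities as differences of expectations of $h$ under two Markov kernels built by composing the elementary forward/backward kernels, and then to invoke the contraction property of Dobrushin coefficients together with the bound $\delta(\rmF_{s,t})\le\rho$, $\delta(\rmB_{s}^{\chi,r})\le\rho$ from Lemma \ref{BOEMsupp:prop:smoothing-kernels}. Recall the standard facts: for any Markov kernel $F$ and any bounded $h$, $\osc(Fh)\le\delta(F)\,\osc(h)$; for a composition, $\delta(F_1\cdots F_k)\le\prod_i\delta(F_i)$; and for two probability measures $\chi,\widetilde\chi$, $|\chi h-\widetilde\chi h|\le\osc(h)$ (in fact $\le\normTV{\chi-\widetilde\chi}\,\osc(h)$, but the cruder bound suffices here).

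For part \eqref{BOEMsupp:eq:forget-chi}: first I would rewrite
\[
\frac{\int  \chi(\rmd x_r)L_{r:s-1}(x_r,\rmd x_s)h(x_s)L_{s:t-1}(x_s,\Xset)}{\int  \chi(\rmd x_r)L_{r:t-1}(x_r,\Xset)}
= \int \filter{\chi}{r}{r}{r:r}(\rmd x_r)\,\big(\rmF_{r,t}\cdots\rmF_{s-1,t}\big)(x_r,\rmd x_s)\,h(x_s),
\]
i.e. as the filtering-type distribution at time $r$ pushed forward by the product of forward smoothing kernels $\rmF_{u,t}$ for $u=r,\dots,s-1$; the key algebraic identity is that $\rmF_{u,t}$ transports the ratio $L_{u:t-1}(\cdot,\Xset)$-weighted mass at time $u$ to that at time $u+1$, which unwinds by telescoping the definition \eqref{BOEMsupp:eq:forward-kernel}. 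Writing $\Psi\eqdef\rmF_{r,t}\cdots\rmF_{s-1,t}$, both terms in \eqref{BOEMsupp:eq:forget-chi} equal $\int\nu(\rmd x_r)\Psi(x_r,\rmd x_s)h(x_s)$ for two probability measures $\nu$ (resp. $\widetilde\nu$) obtained by normalizing $\chi$ (resp. $\widetilde\chi$); then $|\nu\Psi h-\widetilde\nu\Psi h|\le\osc(\Psi h)\le\delta(\Psi)\osc(h)\le\prod_{u=r}^{s-1}\delta(\rmF_{u,t})\osc(h)\le\rho^{s-r}\osc(h)$.

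For part \eqref{BOEMsupp:eq:forget-f}: the roles of initial distribution and final weighting are swapped, so here I would argue dually using the backward kernels $\rmB_{u}^{\chi,s}$. The ratio $\int\chi(\rmd x_s)h(x_s)L_{s:t-1}(x_s,\rmd x_t)f(x_t)\big/\int\chi(\rmd x_s)L_{s:t-1}(x_s,\rmd x_t)f(x_t)$ is the expectation of $h(x_s)$ under the conditional law of $x_s$ given $x_t$ distributed according to the normalized $f$-reweighted endpoint measure; by Bayes/time-reversal this law factors as $\mathrm{(endpoint\ law\ depending\ on\ }f\mathrm{)}$ composed with the backward transitions $\rmB_{t-1}^{\chi,s}\cdots\rmB_{s}^{\chi,s}$ going from time $t$ down to time $s$. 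Only the endpoint law depends on $f$ versus $\widetilde f$; the backward chain of $t-s$ kernels does not. Hence the difference is again of the form $|\nu\Psi' h-\widetilde\nu\Psi' h|$ with $\Psi'$ a product of $t-s$ backward kernels, giving the bound $\delta(\Psi')\osc(h)\le\rho^{t-s}\osc(h)$ by Lemma \ref{BOEMsupp:prop:smoothing-kernels}.

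The main obstacle is purely bookkeeping: verifying the two factorization identities — that the forward ratio unwinds as a product of the $\rmF_{u,t}$'s acting on the time-$r$ filter, and that the $f$-reweighted smoothing functional unwinds as the $f$-dependent endpoint law composed with a product of backward kernels that is itself independent of $f$. Once these identities are in hand, the contraction estimate is immediate from Lemma \ref{BOEMsupp:prop:smoothing-kernels} and the submultiplicativity of the Dobrushin coefficient. I do not expect any analytic subtlety beyond checking that all the normalizing integrals are finite and positive, which follows from $0<\sigma_-\le m\le\sigma_+$ together with the (implicit) integrability of the $g(\cdot,y_s)$ against $\lambda$.
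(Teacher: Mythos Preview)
Your proposal is correct and matches the paper's argument: part~\eqref{BOEMsupp:eq:forget-chi} is exactly the forward-kernel contraction (the paper simply cites \cite[Proposition~4.3.23]{cappe:moulines:ryden:2005}), and part~\eqref{BOEMsupp:eq:forget-f} is the same backward-kernel factorization the paper spells out, ending with $|(\eta-\widetilde\eta)\rmB_{t-1:s}^{\chi,s}h|\le\rho^{t-s}\osc(h)$. The only slip is in your displayed identity for~(i): the starting measure should be $\nu_r(\rmd x_r)\propto\chi(\rmd x_r)L_{r:t-1}(x_r,\Xset)$ rather than $\filter{\chi}{r}{r}{r:r}=\chi$ itself (the forward kernels telescope only after this reweighting), but since your Dobrushin bound uses nothing about $\nu_r,\widetilde\nu_r$ beyond their being probability measures, the argument goes through unchanged.
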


\begin{proof}[\textit{Proof of \eqref{BOEMsupp:eq:forget-chi}}]
See \cite[Proposition 4.3.23]{cappe:moulines:ryden:2005}.

\textit{Proof of \eqref{BOEMsupp:eq:forget-f}}  
When $s=t$, then \eqref{BOEMsupp:eq:forget-f} is equal to
\[
\left | \frac{\int \chi(\rmd x_t)h(x_t)f(x_t)}{\int \chi(\rmd x_t)f(x_t)}-
  \frac{\int \chi(\rmd x_t)h(x_t)\widetilde{f}(x_t)}{\int \chi(\rmd
    x_t)\widetilde{f}(x_t)} \right | \eqsp.
\]
This is of the form $\left(\eta - \widetilde{\eta}\right)h$ where $\eta$ and
$\widetilde{\eta}$ are probability distributions on
$\left(\Xset,\sigmaX\right)$. Then,
 \[\left|\left(\eta - \widetilde{\eta}\right)h\right|\leq \frac{1}{2}\normTV{\eta - \widetilde{\eta}}\osc\left(h\right) \leq \osc(h)\eqsp.\]

 Let $s < t$. By definition of the backward smoothing kernel, see
 \eqref{eq:backward-kernel},
\begin{equation*}
\rmB_{s}^{\chi,s}(x_{s+1},A) = \frac{\int \chi(\rmd x_s)1_A(x_s)m(x_s,x_{s+1})}{\int \chi(\rmd x_s)m(x_s,x_{s+1})}\eqsp.
\end{equation*}
Therefore,
\begin{multline*}
  \int \chi(\rmd x_s)h(x_s)L_{s:t-1}(x_s,\rmd x_t)f(x_t)\\
  = \int \chi(\rmd x_s)L_{s}(x_s,\rmd
  x_{s+1})\rmB_{s}^{\chi,s}h(x_{s+1})L_{s+1:t-1}(x_{s+1},\rmd x_t)f(x_t)\eqsp.
\end{multline*}
By repeated application of the backward smoothing kernel we have
\begin{multline*}
\int \chi(\rmd x_s)h(x_s)L_{s:t-1}(x_s,\rmd x_t)f(x_t)\\
= \int \chi(\rmd x_s)L_{s:t-1}(x_s,\rmd x_{t})\rmB_{t-1:s}^{\chi,s}h(x_{t})f(x_t)\eqsp,
\end{multline*}
where we denote by $\rmB_{t-1:s}^{\chi,s}$ the composition of the kernels
defined by induction for $s \leq u$
\[
\rmB_{s:s}^{\chi,s} \eqdef \rmB_{s}^{\chi,s} \eqsp, \qquad
\rmB_{u:s}^{\chi,s}(x,A) \eqdef \int \rmB_{u}^{\chi,s}(x,\rmd
x^{\prime})\rmB_{u-1:s}^{\chi,s}(x^{\prime},A)\eqsp.
\]
Finally, by definition of $\filter{\chi}{s}{t}{s:t}$
\begin{multline*}
  \left | \frac{\int \chi(\rmd x_s)h(x_s)L_{s:t-1}(x_s,\rmd x_t)f(x_t)}{\int \chi(\rmd x_s)L_{s:t-1}(x_s,\rmd x_t)f(x_t)}- \frac{\int \chi(\rmd x_s)h(x_s)L_{s:t-1}(x_s,\rmd x_t)\widetilde{f}(x_t)}{\int \chi(\rmd x_s)L_{s:t-1}(x_s,\rmd x_t)\widetilde{f}(x_t)} \right | \\
  = \left |
    \frac{\filter{\chi}{s}{t}{s:t}\left[\left(\rmB_{t-1:s}^{\chi,s}h\right)f\right]}{\filter{\chi}{s}{t}{s:t}\left[f\right]}
    -
    \frac{\filter{\chi}{s}{t}{s:t}\left[\left(\rmB_{t-1:s}^{\chi,s}h\right)\widetilde{f}\right]}{\filter{\chi}{s}{t}{s:t}\left[\widetilde{f}\right]}\right
  |\eqsp.
 \end{multline*}
 This is of the form $\left(\eta -
   \widetilde{\eta}\right)\rmB_{t-1:s}^{\chi,s}h$ where $\eta$ and
 $\widetilde{\eta}$ are probability distributions on
 $\left(\Xset,\sigmaX\right)$. The proof of the second statement is
 completed upon noting that
\begin{align*}
  \left|\eta\rmB_{t-1:s}^{\chi,s}h - \widetilde{\eta}\rmB_{t-1:s}^{\chi,s}h\right|&\leq \frac{1}{2}\normTV{\eta - \widetilde{\eta}}\osc\left(\rmB_{t-1:s}^{\chi,s}h\right)\\
  &\leq \frac{1}{2}\normTV{\mu - \widetilde{\mu}}\delta\left(\rmB_{t-1:s}^{\chi,s}\right)\osc(h) \leq \rho^{t-s}\osc(h)\eqsp,\\
\end{align*}
where we used Lemma~\ref{BOEMsupp:prop:smoothing-kernels} in the last inequality.
\end{proof}

\subsection{Bivariate smoothing distribution}
\label{BOEMsupp:sec:bivariate:smoothing}
\begin{proposition}\label{BOEMsupp:prop:expforget}
  Assume H\ref{BOEMsupp:assum:strong}. Let $\chi$, $\widetilde{\chi}$ be two
  distributions on $\left(\Xset,\sigmaX\right)$.  For any measurable
  function $h: \Xset^{2}\times \Yset \to \Rset^d$ and any $\bfy \in
  \Yset^\Zset$ such that $\sup_{x,x'}|h(x,x^{\prime},y_s)| < +\infty$ for any $s
  \in \Zset$
\begin{enumerate}[(i)]
\item \label{item:forget-nu-pv} For any $r < s\leq t$ and any $\ell_{1},
  \ell_2\geq 1$,
\begin{equation}\label{BOEMsupp:eq:forget-nu-pv}
\underset{\param\in\paramset}{\sup}\left | \smoothfunc{\widetilde{\chi},r}{\param,s,t}\left(h,\bfy\right) - \smoothfunc{\chi,r-\ell_{1}}{\param,s,t+\ell_{2}}\left(h,\bfy\right) \right | \leq \left (\rho^{s-1-r} + \rho^{t-s}\right )\osc(h_{s})\eqsp.
\end{equation}
\item \label{item:forget-limit} For any $\param\in\paramset$, there exists a
  function $\bfy \mapsto \Phi_{\param}(h,\bfy)$ s.t. for any distribution
  $\chi$ on $(\Xset,\sigmaX)$ and any $r<s\leq t$
\begin{equation}\label{sup:eq:forget-limit}
\underset{\param\in\paramset}{\sup}\left|\smoothfunc{\chi,r}{\param,s,t}\left(h,\bfy\right) - \smoothfunc{}{\param}\left(h,\shift^s\bfy\right)\right|\leq \left (\rho^{s-1-r} + \rho^{t-s}\right )\osc(h_{s})\eqsp.
\end{equation}
\end{enumerate}
\end{proposition}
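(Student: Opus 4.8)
The plan is to prove part~\eqref{item:forget-nu-pv} first by interpreting the smoothing functional $\smoothfunc{\chi,r}{\param,s,t}(h,\bfy)$ in terms of the kernels $L_s$, $\rmF_{s,t}$, $\rmB_s^{\chi,r}$ introduced above, and then to obtain part~\eqref{item:forget-limit} as a corollary by a Cauchy-sequence argument. The key observation is that, by the definition \eqref{BOEMsupp:eq:define-Phi:theta} of $\smoothfunc{\chi,r}{\param,s,t}$, the functional can be written (after inserting $h_s$ at positions $s-1,s$) as a double average: an expectation over $x_s$ under the filtering distribution $\filter{\chi}{r}{s}{r:s}$ tilted forward by $L_{s:t-1}(\cdot,\Xset)$ — this is governed by the forward kernel $\rmF_{s,t}$ and forgets the ``future horizon'' $t$ at geometric rate — and an expectation over $x_{s-1}$ obtained by pushing $h_s$ through the backward kernel $\rmB_{s-1}^{\chi,r}$, which forgets the ``past horizon'' $r$ and the initial law $\chi$ at geometric rate. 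Concretely, I would write
\[
\smoothfunc{\chi,r}{\param,s,t}(h,\bfy) = \frac{\int \filter{\chi}{r}{s}{r:s}(\rmd x_s)\, L_{s:t-1}(x_s,\Xset)\, \bigl(\rmB_{s-1}^{\chi,r} h_s(\cdot,x_s)\bigr)(x_s)}{\int \filter{\chi}{r}{s}{r:s}(\rmd x_s)\, L_{s:t-1}(x_s,\Xset)}\eqsp,
\]
so that changing $r \to r-\ell_1$ only changes the measure $\filter{\chi}{r}{s}{r:s}$ and the backward kernel's initialization, while changing $t \to t+\ell_2$ only changes the forward tilting weight $L_{s:t-1}(\cdot,\Xset)$.

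For the actual estimate in~\eqref{item:forget-nu-pv} I would split the difference into two terms via a triangle inequality, changing $r$ in the first step and $t$ in the second. For the $t$-change, both quantities are of the form ``probability measure $\filter{\chi}{r}{s}{r:s}$ tilted by a nonnegative weight, applied to a fixed bounded test function'' — this is exactly the situation of Lemma~\ref{BOEMsupp:lem:back-forget}\eqref{BOEMsupp:eq:forget-f} with the roles of $f,\widetilde f$ played by $L_{s:t-1}(\cdot,\Xset)$ and $L_{s:t+\ell_2-1}(\cdot,\Xset)$, yielding a bound $\rho^{t-s}\osc(\text{test function})$, where the test function $\rmB_{s-1}^{\chi,r}h_s$ has oscillation at most $\osc(h_s)$ because a Markov kernel does not increase oscillation. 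For the $r$-change, I would use Lemma~\ref{BOEMsupp:lem:back-forget}\eqref{BOEMsupp:eq:forget-chi} with the two initial distributions $\chi$ and $\widetilde\chi$ (here really $\widetilde\chi$ and $\chi$ shifted back by $\ell_1$), applied at the appropriate indices so that the forgetting exponent comes out as $\rho^{s-1-r}$ — the shift by $s-1$ rather than $s$ reflecting that the factor $h_s$ depends on $x_{s-1}$, so one ``loses'' one step of contraction. Summing the two bounds gives $(\rho^{s-1-r}+\rho^{t-s})\osc(h_s)$, uniformly in $\param$ since $\rho$ does not depend on $\param$ under H\ref{BOEMsupp:assum:strong}.

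For part~\eqref{item:forget-limit}: fix $\param$ and $\bfy$, and consider the sequence indexed by $(r,t)$ with $r\to-\infty$, $t\to+\infty$. By~\eqref{item:forget-nu-pv} (applied with $\widetilde\chi=\chi$), the family $\{\smoothfunc{\chi,r}{\param,s,t}(h,\bfy)\}$ is Cauchy as $r\to-\infty$, $t\to+\infty$, so it has a limit; moreover~\eqref{item:forget-nu-pv} shows this limit is the same for every choice of $\chi$ — this defines $\Phi_\param(h,\bfy)$, and by translation of indices one checks it depends on $\bfy$ only through the shifted path, giving the notation $\smoothfunc{}{\param}(h,\shift^s\bfy)$. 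Letting $\ell_1,\ell_2\to\infty$ in~\eqref{item:forget-nu-pv} then yields~\eqref{sup:eq:forget-limit}, with the supremum over $\param$ preserved because the bound $(\rho^{s-1-r}+\rho^{t-s})\osc(h_s)$ is $\param$-free. I expect the main obstacle to be bookkeeping: making the reduction to the exact hypotheses of Lemma~\ref{BOEMsupp:lem:back-forget} rigorous — in particular getting the index shift in the backward part to produce precisely $\rho^{s-1-r}$ and not $\rho^{s-r}$, and carefully handling the degenerate case $s=t$ (where the forward-tilting weight is the constant function $1$ and only the backward/initial-law forgetting contributes).
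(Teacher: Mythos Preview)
Your plan is essentially the paper's own: a triangle inequality splitting into an $r$-change and a $t$-change, each handled by one of the two parts of Lemma~\ref{BOEMsupp:lem:back-forget}, followed by a Cauchy-sequence argument for part~\eqref{item:forget-limit} using the shift identity $\smoothfunc{\chi,r}{\param,s,t}(h,\bfy)=\smoothfunc{\chi,r-s}{\param,0,t-s}(h,\shift^s\bfy)$.

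One point of caution: the single backward-kernel representation you write works cleanly only for the $t$-change. For the $r$-change it is problematic, because your test function $x_s\mapsto \rmB_{s-1}^{\chi,r}h_s(\cdot,x_s)(x_s)$ itself depends on $(\chi,r)$, so replacing $(\widetilde\chi,r)$ by $(\chi,r-\ell_1)$ changes both the measure \emph{and} the integrand, and Lemma~\ref{BOEMsupp:lem:back-forget}\eqref{BOEMsupp:eq:forget-chi} no longer applies directly. The paper handles this by using the dual representation for that term: it defines $\bar h_{s,t}(x_{s-1})\eqdef\int \rmF_{s-1,t}(x_{s-1},\rmd x_s)\,h_s(x_{s-1},x_s)$ via the \emph{forward} kernel, which depends on $t$ but not on $(\chi,r)$, and then applies Lemma~\ref{BOEMsupp:lem:back-forget}\eqref{BOEMsupp:eq:forget-chi} at index $s-1$ to get precisely the exponent $\rho^{s-1-r}$. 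So your intuition about the index shift is correct, but it is the forward kernel, not the backward one, that makes the $r$-change fit the lemma. With that adjustment your argument goes through and coincides with the paper's.
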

\begin{remark}\label{BOEMsupp:rem:lgn}
\begin{enumerate}[(a)]
\item If $\chi=\widetilde{\chi}$, $\ell_{1}=0$ and $\ell_{2}\geq 1$,
  \eqref{BOEMsupp:eq:forget-nu-pv} becomes
\[
\underset{\param\in\paramset}{\sup}\left | \smoothfunc{\chi,r}{\param,s,t}\left(h,\bfy\right) - \smoothfunc{\chi,r}{\param,s,t+\ell_{2}}\left(h,\bfy\right) \right | \leq \rho^{t-s}\osc(h_{s})\eqsp.
\]
\item if $\ell_{2} = 0$ and $\ell_{1}\geq 1$, \eqref{BOEMsupp:eq:forget-nu-pv} becomes
\[
\underset{\param\in\paramset}{\sup}\left | \smoothfunc{\widetilde{\chi},r}{\param,s,t}\left(h,\bfy\right) - \smoothfunc{\chi,r-\ell_{1}}{\param,s,t}\left(h,\bfy\right) \right | \leq \rho^{s-1-r}\osc(h_{s})\eqsp.
\]
\end{enumerate}
\end{remark}
\begin{proof} 
  \textit{(i)} Let $r$, $s$, $t$ such that $r < s\leq t$, $\ell_{1},\ell_2 \geq
  1$, and $\param\in\paramset$. Define the distribution
  $\chi_{\param,r-\ell_{1}:r}$ on $(\Xset,\sigmaX)$ by
\begin{equation*}
\chi_{\param,r-\ell_{1}:r}(A) \eqdef \frac{\int\chi(\rmd x_{r-\ell_{1}})L_{\param,r-\ell_{1}:r-1}(x_{r-\ell_{1}},\rmd x_{r})1_{A}(x_r)}{\int\chi(\rmd x_{r-\ell_{1}})L_{\param,r-\ell_{1}:r-1}(x_{r-\ell_{1}},\Xset)}\eqsp, \quad \forall A\in\sigmaX\eqsp.
\end{equation*}
We write $\left |
  \smoothfunc{\widetilde{\chi},r}{\param,s,t}\left(h,\bfy\right) -
  \smoothfunc{\chi,r-\ell_{1}}{\param,s,t+\ell_{2}}\left(h,\bfy\right) \right |
\leq \widetilde{T}_1 + \widetilde{T}_2$ where, by using \eqref{BOEMsupp:eq:define-Phi:theta},
\begin{multline*}
  \widetilde{T}_1 \eqdef  \left | \frac{\int \widetilde{\chi}(\rmd x_r)L_{\param,r:s-2}(x_r,\rmd x_{s-1})h_{s}(x_{s-1},x_s)L_{\param,s-1}(x_{s-1},\rmd x_s)L_{\param,s:t-1}(x_s,\Xset)}{\int \widetilde{\chi}_{r}(\rmd x_r)L_{\param,r:t-1}(x_r,\Xset)}\right.\\
  \left. - \frac{\int \chi_{\param,r-\ell_{1}:r}(\rmd x_r)L_{\param,r:s-2}(x_r,\rmd
      x_{s-1})h_{s}(x_{s-1},x_s)L_{\param,s-1}(x_{s-1},\rmd
      x_s)L_{\param,s:t-1}(x_s,\Xset)}{\int \chi_{\param,r-\ell_{1}:r}(\rmd
      x_r)L_{\param,r:t-1}(x_r,\Xset)}\right |\eqsp,
\end{multline*}
and
\begin{multline*}
  \widetilde{T}_2 \eqdef \left | \frac{\int \chi_{\param,r-\ell_{1}:r}(\rmd
      x_r)L_{\param,r:s-2}(x_r,\rmd
      x_{s-1})h_{s}(x_{s-1},x_s)L_{\param,s-1}(x_{s-1},\rmd
      x_s)L_{\param,s:t-1}(x_s,\Xset)}{\int \chi_{\param,r-\ell_{1}:r}(\rmd
      x_r)L_{\param,r:t-1}(x_r,\Xset)}\right.\\
  \left. -\frac{\int \chi_{\param,r-\ell_{1}:r}(\rmd
      x_r)L_{\param,r:s-2}(x_r,\rmd
      x_{s-1})h_{s}(x_{s-1},x_s)L_{\param,s-1}(x_{s-1},\rmd
      x_s)L_{\param,s:t+\ell_2-1}(x_s,\Xset)}{\int
      \chi_{\param,r-\ell_{1}:r}(\rmd x_r)L_{\param,r:t+\ell_{2}-1}(x_r,\Xset)}\right|\eqsp.
\end{multline*}
Set $\bar h_{s,t} : x \mapsto \int F_{\param,s-1,t}(x,\rmd x_s)h_{s}(x,x_s)$
where $F_{\param,s-1,t}$ is the forward smoothing kernel (see \eqref{BOEMsupp:eq:forward-kernel}). Then,
\begin{multline*}
  \widetilde{T}_1=\left|\frac{\int \widetilde{\chi}(\rmd x_r)L_{\param,r:s-2}(x_r,\rmd x_{s-1})\bar{h}_{s,t}(x_{s-1})L_{\param,s-1:t-1}(x_{s-1},\Xset)}{\int \widetilde{\chi}_{r}(\rmd x_r)L_{\param,r:t-1}(x_r,\Xset)}\right.\\
  -\left. \frac{\int \chi_{\param,r-\ell_{1}:r}(\rmd x_r)L_{\param,r:s-2}(x_r,\rmd
      x_{s-1})\bar{h}_{s,t}(x_{s-1})L_{\param,s-1:t-1}(x_{s-1},\Xset)}{\int \chi_{\param,r-\ell_{1}:r}(\rmd
      x_r)L_{\param,r:t-1}(x_r,\Xset)}\right|\eqsp.
\end{multline*}
By Lemma~\ref{BOEMsupp:lem:back-forget}\eqref{BOEMsupp:eq:forget-chi},
\begin{equation*}
\widetilde{T}_1 \leq \rho^{s-1-r}\osc(\bar{h}_{s,t})\leq 2\rho^{s-1-r}\sup_{x\in\Xset}|\bar h_{s,t}(x)| \leq 2\rho^{s-1-r}\sup_{(x,x^{\prime})\in\Xset^{2}}|h_{s}(x,x^{\prime})|\eqsp.
\end{equation*}
Set $\widetilde{h}_{s}: x \mapsto \int
\rmB_{\param,s-1}^{\chi_{\param,r-\ell_{1}:s-1},s-1}(x,\rmd
x_{s-1})h_{s}(x_{s-1},x)\eqsp,$ where
$\rmB_{\param,s-1}^{\chi_{\param,r-\ell_{1}:s-1},s-1}$ is the backward
smoothing kernel (see \eqref{eq:backward-kernel}).  Then,
\begin{multline*}
  \widetilde{T}_2 = \left|\frac{\int \chi_{\param,r-\ell_{1}:s}(\rmd x_{s})\widetilde{h}_s(x_{s})L_{\param,s:t-1}(x_{s},\rmd x_t)L_{\param,t:t+\ell_{2}-1}(x_t,\Xset)}{\int \chi_{\param,r-\ell_{1}:s}(\rmd x_{s})L_{\param,s:t-1}(x_{s},\rmd x_t)L_{\param,t:t+\ell_{2}-1}(x_t,\Xset)}  \right.\\
    - \left. \frac{\int \chi_{\param,r-\ell_{1}:s}(\rmd
      x_{s})\widetilde{h}_s(x_{s})L_{\param,s:t-1}(x_{s},\Xset)}{\int
      \chi_{\param,r-\ell_{1}:s}(\rmd
      x_{s})L_{\param,s:t-1}(x_{s},\Xset)}\right|\eqsp.
\end{multline*}
Then, by Lemma~\ref{BOEMsupp:lem:back-forget}\eqref{BOEMsupp:eq:forget-f},
\begin{equation*}
\widetilde{T}_2 \leq \rho^{t-s}\osc(\widetilde{h}_s)\leq 2\rho^{t-s}\sup_{x\in\Xset}|\widetilde{h}_{s}(x)|\leq 2\rho^{t-s}\sup_{(x,x^{\prime})\in\Xset^{2}}|h_{s}(x,x^{\prime})|\eqsp.
\end{equation*}
The proof is concluded upon noting that, for any constant $c$, 
\[
\osc(h) = 2\underset{c\in\Rset}{\inf}\left\{\sup_{(x,x^{\prime})\in\Xset^{2}}|h_{s}(x,x^{\prime})-c|\right\}\eqsp.
\]

\textit{(ii)} By \eqref{BOEMsupp:eq:forget-nu-pv}, for any increasing sequence of non
negative integers $(r_{\ell})_{\ell\geq 0}$, $(t_{\ell})_{\ell\geq 0}$ s.t.
$\lim r_\ell = \lim t_\ell = +\infty$, the sequence
$\{\smoothfunc{\chi,-r_{\ell}}{\param,0,t_{\ell}}\left(h,\bfy\right)\}_{\ell
  \geq 0}$ is a Cauchy sequence uniformly in $\param$ and $\chi$. Then, there exists
a limit $\smoothfunc{}{\param}\left(h,\bfy\right)$ s.t.
\begin{equation}
  \label{eq:prop:forget:claim2:tool1}
  \lim_{\ell \to +\infty}    \sup_{\chi} \underset{\param\in\paramset}{\sup}\left|\smoothfunc{\chi,-r_{\ell}}{\param,0,t_{\ell}}\left(h,\bfy\right) - \smoothfunc{}{\param}\left(h,\bfy\right)\right| =0 \eqsp.
\end{equation}
We write, for any $r< s\leq t$ and any $\ell\geq 1$
\begin{multline*}
  \left| \smoothfunc{\chi,r}{\param,s,t}\left(h,\bfy\right) -
    \smoothfunc{}{\param}\left(h,\shift^s\bfy\right)
  \right|  \\
  \leq \left| \Phi^{\chi,r}_{\param,s,t}\left(h,\bfy\right) -
    \smoothfunc{\chi,r-\ell}{\param,s,t+\ell}\left(h,\bfy\right) \right| +
  \left|\smoothfunc{\chi,r-\ell}{\param,s,t+\ell}\left(h,\bfy\right) -
    \smoothfunc{}{\param}\left(h,\shift^s\bfy\right) \right| \eqsp.
\end{multline*}
Since $\smoothfunc{\chi,r-\ell}{\param,s,t+\ell}\left(h,\bfy\right)
=\smoothfunc{\chi,r-\ell-s}{\param,0,t+\ell-s}\left(h,\shift^s\bfy\right)$,
Proposition~\ref{BOEMsupp:prop:expforget}\eqref{item:forget-nu-pv} yields
\begin{multline*}
  \left|\smoothfunc{\chi,r}{\param,s,t}\left(h,\bfy\right) - \smoothfunc{}{\param}\left(h,\shift^s\bfy\right)\right| \leq \left (\rho^{s-r-1} + \rho^{t-s}\right )\osc(h_{s})  \\
  +
  \left|\smoothfunc{\chi,r-\ell-s}{\param,0,t+\ell-s}\left(h,\shift^s\bfy\right)
    - \smoothfunc{}{\param}\left(h,\shift^s\bfy\right) \right| \eqsp.
\end{multline*}
The proof is concluded by \eqref{eq:prop:forget:claim2:tool1}.
\end{proof}

\subsection{Limiting normalized log-likelihood}
\label{BOEMsupp:sec:limiting:loglikeli} 
This section contains results adapted from \cite{douc:moulines:ryden:2004} which are stated here for better clarity.
Define for any $r\leq s$,
\begin{equation}
  \label{BOEMsupp:eq:delta-loglikelihood}
   \delta_{\param,s}^{\chi,r}(\bfy) \eqdef \ell_{\param,s+1}^{\chi,r}(\bfy) - \ell_{\param,s}^{\chi,r}(\bfy) \eqsp,
\end{equation}
where $\ell_{\param,s+1}^{\chi,r}(\bfy)$ is defined by
\begin{equation}
\label{BOEMsupp:eq:def:loglikelihood}
\loglik{\chi,r}{\param,s+1}(\bfY) \eqdef \log\int \chi(\rmd x_r)
  \prod_{u=r+1}^{s+1} m_\param(x_{u-1},x_u) g_\param(x_u, \bfY_u) \ \lambda(\rmd x_{r+1:s+1})\eqsp.
\end{equation}
For any $T>0$ and any probability distribution $\chi$ on
$(\Xset,\sigmaX)$, we thus have
\begin{equation}
  \label{BOEMsupp:eq:loglikelihood-delta}
  \ell^{\chi,0}_{\param,T}(\bfy) = \sum_{s=0}^{T-1} \left( \ell^{\chi,0}_{\param,s+1}(\bfy) -
  \ell^{\chi,0}_{\param,s}(\bfy) \right) = \sum_{s=0}^{T-1}\delta_{\param,s}^{\chi,0}(\bfy) \eqsp.
\end{equation}
It is established in Lemma~\ref{BOEMsupp:lem:forget-log-lik} that for any $\param \in
\paramset$, $\bfy \in \Yset^\Zset$, $s \geq 0$ and any initial distribution
$\chi$, the sequence $\{\delta_{\param,s}^{\chi,s-r}(\bfy) \}_{r \geq 0}$ is a
Cauchy sequence and its limit does not depend upon $\chi$. Regularity
conditions on this limit are given in Lemmas~\ref{BOEMsupp:lem:bound-log-lik} and
\ref{BOEMsupp:lem:cont-log-lik}. Finally, Theorem~\ref{BOEMsupp:th:likelihood} shows that for any
$\param$, $\lim_T T^{-1} \ell_{\param,T}^{\chi,0}(\bfY)$ exists w.p.1. and this
limit is a (deterministic) continuous function in $\param$.
\begin{lemma}\label{BOEMsupp:lem:forget-log-lik}
  Assume H\ref{BOEMsupp:assum:strong}.
    \begin{enumerate}[(i)]
  \item \label{BOEMsupp:lem:forget-log-lik:claim1} For any $\ell,r,s \geq 0$, any
    initial distributions $\chi, \chi'$ on $\Xset$ and any $\bfy \in
    \Yset^\Zset$
\begin{equation*}
\underset{\param\in\paramset}{\sup}\left|\delta_{\param,s}^{\chi,s-r}(\bfy)-\delta_{\param,s}^{\chi',s-r-\ell}(\bfy)\right|\leq \frac{2 }{1-\rho}\rho^{r }\eqsp.
\end{equation*}
\item \label{BOEMsupp:lem:forget-log-lik:claim2} For any $\param \in \paramset$, there
  exists a function $\bfy \mapsto \delta_{\param}(\bfy)$ such that for any
  initial distribution $\chi$, any $\bfy \in \Yset^\Zset$ and any $r,s\geq 0$,
\[
\underset{\param\in\paramset}{\sup}\left|\delta_{\param,s}^{\chi,s-r}(\bfy)-\delta_{\param}(\shift^s\circ\bfy
  )\right|\leq \frac{2}{1-\rho}\rho^{r}\eqsp.
\]
  \end{enumerate}
\end{lemma}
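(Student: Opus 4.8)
The plan is to reduce everything to the forgetting estimates already established for the bivariate smoothing functionals, since $\delta_{\param,s}^{\chi,r}(\bfy)$ is essentially a log-ratio of successive likelihoods and its increments can be rewritten in terms of filtering/predictive distributions. First I would observe that, writing the one-step likelihood increment explicitly,
\[
\delta_{\param,s}^{\chi,r}(\bfy) = \log \int \filter{\chi}{r}{s}{r:s}(\rmd x_s)\, m_\param(x_s, x_{s+1}) g_\param(x_{s+1}, y_{s+1})\, \lambda(\rmd x_{s+1})\eqsp,
\]
so that $\delta_{\param,s}^{\chi,s-r}$ depends on $\chi$ only through the predictive distribution at time $s$ obtained by running the chain forward from time $s-r$ (or $s-r-\ell$). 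The difference of two such quantities with different initializations is the logarithm of a ratio of two integrals of the same nonnegative kernel $x_s \mapsto \int m_\param(x_s,x_{s+1}) g_\param(x_{s+1},y_{s+1})\lambda(\rmd x_{s+1})$ against two different probability measures on $\Xset$.

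The key step is then to bound $|\log(a/b)|$ by $|a-b|/(a\wedge b)$ and to control both the numerator difference and the lower bound uniformly in $\param$. For the numerator, I would apply Lemma~\ref{BOEMsupp:lem:back-forget}\eqref{BOEMsupp:eq:forget-chi} (equivalently, the Dobrushin-contraction bound $\delta(\rmF_{s,t})\leq\rho$ from Lemma~\ref{BOEMsupp:prop:smoothing-kernels}): the two predictive distributions at time $s$, started at times $s-r$ and $s-r-\ell$ respectively, differ in total variation by at most $\rho^{r}$ after $r$ forward steps of the Markov kernel $L$; hence the integrals of any fixed bounded kernel against them differ by at most $\rho^r$ times the oscillation of that kernel. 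Under H\ref{BOEMsupp:assum:strong}, the kernel $x_s\mapsto \int m_\param(x_s,x_{s+1})g_\param(x_{s+1},y_{s+1})\lambda(\rmd x_{s+1})$ has values in $[\sigma_-\, c(y_{s+1}),\, \sigma_+\, c(y_{s+1})]$ where $c(y) = \int g_\param(x,y)\lambda(\rmd x)$ — or more precisely, after normalizing by $\sigma_+ c(y_{s+1})$, it takes values in $[1-\rho, 1]$ — so that the normalized integrand has oscillation $\leq \rho$ and the $a\wedge b$ lower bound is $\geq 1-\rho$. Combining, $|\delta_{\param,s}^{\chi,s-r}(\bfy) - \delta_{\param,s}^{\chi',s-r-\ell}(\bfy)| \leq \rho^{r}\cdot \rho/(1-\rho) \leq \frac{2}{1-\rho}\rho^r$, uniformly in $\param$; actually one should be slightly careful and telescope over the extra $\ell$ steps of prediction as a geometric series, which is exactly where the factor $1/(1-\rho)$ and the constant $2$ come from. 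This proves \eqref{BOEMsupp:lem:forget-log-lik:claim1}.

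For \eqref{BOEMsupp:lem:forget-log-lik:claim2}, claim~(i) shows that $\{\delta_{\param,s}^{\chi,s-r}(\bfy)\}_{r\geq 0}$ is Cauchy uniformly in $\param$ and $\chi$ (take $\ell\to\infty$ along the bound), so it has a limit which, by the uniformity in $\chi$, does not depend on $\chi$; by stationarity of the construction the limit depends on $\bfy$ only through the shifted sequence $\shift^s\bfy$, giving a function $\delta_\param(\shift^s\bfy)$, and letting $\ell\to\infty$ in the bound of (i) yields the stated estimate $\sup_\param|\delta_{\param,s}^{\chi,s-r}(\bfy) - \delta_\param(\shift^s\bfy)|\leq \frac{2}{1-\rho}\rho^r$. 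The main obstacle I anticipate is getting the uniform-in-$\param$ control of the lower bound $a\wedge b$ right: one must factor out the $\param$-dependent but $x$-independent normalizing constant $\int g_\param(x_{s+1},y_{s+1})\lambda(\rmd x_{s+1})$ cleanly so that it cancels in the ratio, and then verify that what remains is genuinely sandwiched between $1-\rho$ and $1$ using only H\ref{BOEMsupp:assum:strong}; the rest is the standard geometric bookkeeping already packaged in Lemma~\ref{BOEMsupp:lem:back-forget}.
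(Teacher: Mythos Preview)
Your approach is the same as the paper's: write $\delta_{\param,s}^{\chi,s-r}(\bfy)=\log\alpha$ with $\alpha=\int\filter{\chi}{s-r}{s}{s-r:s}(\rmd x_s)\,h(x_s)$ and $h(u)=\int m_\param(u,x')g_\param(x',y_{s+1})\lambda(\rmd x')$, bound $|\log\alpha-\log\beta|\leq|\alpha-\beta|/(\alpha\wedge\beta)$, get $\alpha\wedge\beta\geq\sigma_-\int g_\param(\cdot,y_{s+1})\,\rmd\lambda$ from H\ref{BOEMsupp:assum:strong}, and get $|\alpha-\beta|\leq\rho^r\osc(h)$ from Lemma~\ref{BOEMsupp:lem:back-forget}\eqref{BOEMsupp:eq:forget-chi}; part~(ii) then follows from the Cauchy property and the shift identity $\delta_{\param,s}^{\chi,s-r}(\bfy)=\delta_{\param,0}^{\chi,-r}(\shift^s\bfy)$, exactly as you outline. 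One correction: no telescoping over the extra $\ell$ steps is needed or used --- your own first computation already gives $\rho^{r}\cdot\rho/(1-\rho)\leq 2\rho^r/(1-\rho)$ directly, with $1/(1-\rho)=\sigma_+/\sigma_-$ arising from the ratio of the upper and lower envelopes of $h$ and the $2$ from the crude bound $\osc(h)\leq 2\sup|h|$, not from any geometric series in $\ell$.
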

\begin{proof}
  {\em Proof of~\eqref{BOEMsupp:lem:forget-log-lik:claim1}.} Let $s\geq 0$ and $r$ and $r^{\prime}$ be such that $r^{\prime}>r$. By
\eqref{BOEMsupp:eq:delta-loglikelihood}   and \eqref{BOEMsupp:eq:def:loglikelihood}, we have
  $|\delta_{\param,s}^{\chi,s-r}(\bfy)-\delta_{\param,s}^{\chi',s-r'}(\bfy)| =
  |\log \alpha - \log \beta|$ where
  \begin{align}\label{eq:defalpha}
    \alpha  & \eqdef  \frac{\int \chi(\rmd x_{s-r}) \prod_{i=s-r+1}^{s+1} m_{\param}(x_{i-1},x_i) g_\param(x_i, y_i) \lambda(\rmd x_i) }{\int \chi(\rmd x_{s-r})\prod_{i=s-r+1}^{s} m_{\param}(x_{i-1},x_i) g_\param(x_i, y_i) \lambda(\rmd x_i)}  \eqsp, \\
    \beta & \eqdef \frac{\int \chi'(\rmd x_{s-r'})\prod_{i=s-r'+1}^{s+1 }m_{\param}(x_{i-1},x_i) g_\param(x_i, y_i) \lambda(\rmd
      x_i) }{\int \chi'(\rmd x_{s-r'}) 
      \prod_{i=s-r'+1}^{s} m_{\param}(x_{i-1},x_i) g_\param(x_i, y_i) \lambda(\rmd x_i)}
    \nonumber\eqsp.
  \end{align}
  We prove that
 \begin{eqnarray}\label{eq:ineqalphabeta}
 \alpha \wedge \beta \geq \sigma_-\int g_{\param}(x_{s+1},y_{s+1})\lambda(\rmd x_{s+1})
 \eqsp, 
\\
 |\alpha - \beta| \leq 2 \rho^{r} \sigma_+\int
 g_{\param}(x_{s+1},y_{s+1})\lambda(\rmd x_{s+1}) \eqsp,
\end{eqnarray}
and the proof is concluded since $|\log \alpha - \log \beta | \leq |\alpha
-\beta| / (\alpha \wedge \beta)$.
  
The minorization on $\alpha$ and $\beta$ is a consequence of
H\ref{BOEMsupp:assum:strong} upon noting that $\alpha$ and $\beta$ are of the form $\int
\mu( \rmd x_{s}) m_\param(x_{s},x_{s+1}) g_\param(x_{s+1}, y_{s+1})
\lambda(\rmd x_{s+1})$ for some probability measure $\mu$. The upper bound on
$|\alpha - \beta|$ is a consequence of
Lemma~\ref{BOEMsupp:lem:back-forget}\eqref{BOEMsupp:lem:forget-log-lik:claim1} applied with
\[
\tilde \chi(\rmd x_{s-r}) \leftarrow \int_{\Xset^{r'-r}} \chi'(\rmd x_{s-r'})
\left\{ \prod_{i=s-r'}^{s-r-1} g_\param(x_{i},y_{i}) m_{\param}(x_{i},x_{i+1})
   \right\}\ \lambda(\rmd x_{s-r'+1:s-r})
\]
and $h(u) \leftarrow \int g_\param(x_{s+1}, y_{s+1}) m_{\param}(u,x_{s+1}) \lambda(\rmd
x_{s+1})$.

{\em Proof of~\eqref{BOEMsupp:lem:forget-log-lik:claim2}.} By
\eqref{BOEMsupp:lem:forget-log-lik:claim1}, for any $\bfy \in \Yset^\Zset$, the sequence
$\{\delta_{\param,0}^{\chi,-r}(\bfy) \}_{r \geq 0}$ is a Cauchy sequence uniformly in $\param$: there
exists a limit denoted by $\delta_\param(\bfy )$ - which does not depend upon
$\chi$ - such that
\begin{equation}
  \label{eq:BOEMsupp:lem:forget-log-lik:claim2:tool1}
  \lim_{r \to +\infty} \sup_{\param \in \paramset} \left|
  \delta_{\param,0}^{\chi,-r}(\bfy) - \delta_\param(\bfy)\right| =0 \eqsp.
\end{equation}
We write for $r \leq r'$
\[
\left| \delta_{\param,s}^{\chi,s-r}(\bfy) - \delta_\param(\shift^s\circ\bfy)
\right| \leq \left| \delta_{\param,s}^{\chi,s-r}(\bfy) -
  \delta_{\param,s}^{\chi,s-r'}(\bfy) \right| +
\left|\delta_{\param,s}^{\chi,s-r'}(\bfy) - \delta_\param(\shift^s\circ\bfy)
\right| \eqsp.
\]
Observe that by definition, $\delta_{\param,s}^{\chi,s-r}(\bfy) =
\delta_{\param,0}^{\chi,-r}(\shift^s\circ\bfy)$. This property, combined with
Lemma~\ref{BOEMsupp:lem:forget-log-lik}\eqref{BOEMsupp:lem:forget-log-lik:claim1}, yield
\[
\sup_{\param \in \paramset} \left| \delta_{\param,s}^{\chi,s-r}(\bfy) -
  \delta_\param(\shift^s\circ\bfy) \right| \leq \frac{2}{1-\rho} \rho^{r } +
\sup_{\param \in \paramset}
\left|\delta_{\param,0}^{\chi,-r'}(\shift^s\circ\bfy) -
  \delta_\param(\shift^s\circ\bfy) \right| \eqsp.
\]
When $r' \to +\infty$, the second term in the rhs tends to zero by
(\ref{eq:BOEMsupp:lem:forget-log-lik:claim2:tool1}) - for fixed $\bfy, s$ and $\chi$ -.
This concludes the proof.
\end{proof}

\begin{lemma}\label{BOEMsupp:lem:bound-log-lik}
  Assume H\ref{BOEMsupp:assum:strong}.
  For any $\bfy \in \Yset^\Zset$ and $s \geq 0$,
\[
\sup_{r\geq 0} \sup_{\param \in \paramset}
\left|\delta_{\param,s}^{\chi,s-r}(\bfy) \right| \leq \left|\log \sigma_+
  b_+(\bfy_{s+1})\right| + \left|\log \sigma_- b_-(\bfy_{s+1})\right| \eqsp,
\]
 and, for any $r\geq 0$,
\[
\sup_{\param \in \paramset} |\delta_{\param}(\bfy)|\leq \frac{2}{(1-\rho)}\rho^r + 
\left|\log \sigma_+  b_+(\bfy_{1})\right|+\left|\log \sigma_- b_-(\bfy_1)\right| \eqsp,
\]where $b_+$ and $b_-$ are defined by (\ref{BOEMsupp:eq:b-}) and (\ref{BOEMsupp:eq:b+}) .
\end{lemma}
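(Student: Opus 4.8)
The plan is to bound $\delta_{\param,s}^{\chi,s-r}(\bfy)$ directly from its definition, and then pass to the limit $\delta_\param$ using the Cauchy estimate already established in Lemma~\ref{BOEMsupp:lem:forget-log-lik}\eqref{BOEMsupp:lem:forget-log-lik:claim1}. Recall from \eqref{BOEMsupp:eq:delta-loglikelihood} and \eqref{BOEMsupp:eq:def:loglikelihood} that $\delta_{\param,s}^{\chi,s-r}(\bfy) = \log(\alpha/\beta_0)$, where, by translation invariance ($\delta_{\param,s}^{\chi,s-r}(\bfy) = \delta_{\param,0}^{\chi,-r}(\shift^s\bfy)$), it suffices to treat $s=0$; concretely the ratio is of the form $\int \mu(\rmd x_0)\, m_\param(x_0,x_1) g_\param(x_1,y_1)\,\lambda(\rmd x_1)$, where $\mu$ is the filtering distribution at time $0$ built from $\chi$ and the observations $y_{-r+1:0}$. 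The key observation is that this ratio is \emph{sandwiched} between $\sigma_-$ and $\sigma_+$ times $\int g_\param(x_1,y_1)\lambda(\rmd x_1)$, by H\ref{BOEMsupp:assum:strong}: since $\sigma_- \le m_\param(x_0,x_1)\le \sigma_+$ and $\mu$ is a probability measure,
\[
\sigma_- \int g_\param(x_1,y_1)\lambda(\rmd x_1) \;\le\; \int \mu(\rmd x_0) m_\param(x_0,x_1) g_\param(x_1,y_1)\lambda(\rmd x_1) \;\le\; \sigma_+ \int g_\param(x_1,y_1)\lambda(\rmd x_1).
\]

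Taking logarithms and using $\bigl|\log a\bigr| \le \bigl|\log(\sigma_+ b_+(\bfy_1))\bigr| + \bigl|\log(\sigma_- b_-(\bfy_1))\bigr|$ whenever $\sigma_- b_-(\bfy_1) \le a \le \sigma_+ b_+(\bfy_1)$ — which holds here because $b_-(\bfy_1) \le \int g_\param(x_1,y_1)\lambda(\rmd x_1) \le b_+(\bfy_1)$ by definitions \eqref{BOEMsupp:eq:b-}--\eqref{BOEMsupp:eq:b+} — gives the first bound uniformly in $r$, $\chi$ and $\param$ (after shifting back by $s$ to replace $\bfy_1$ by $\bfy_{s+1}$). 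For the second bound, I would write, for arbitrary fixed $r\ge 0$,
\[
\sup_{\param}|\delta_\param(\bfy)| \le \sup_{\param}\bigl|\delta_\param(\bfy) - \delta_{\param,0}^{\chi,-r}(\bfy)\bigr| + \sup_{\param}\bigl|\delta_{\param,0}^{\chi,-r}(\bfy)\bigr|,
\]
bound the first term by $\frac{2}{1-\rho}\rho^r$ via Lemma~\ref{BOEMsupp:lem:forget-log-lik}\eqref{BOEMsupp:lem:forget-log-lik:claim2} (with $s=0$), and bound the second term by the first part of the present lemma (with $s=0$).

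There is no real obstacle here — the argument is essentially bookkeeping — but the one point requiring a little care is justifying that the ratio defining $\delta_{\param,s}^{\chi,s-r}$ is exactly of the stated sandwiched form: one must verify that the factors $m_\param(x_{i-1},x_i) g_\param(x_i,y_i)$ for $i \le s$ appearing in both numerator and denominator cancel, up to normalization, leaving precisely the single-step integral against the normalized measure $\mu$, and that the degenerate case $r=0$ (where $\mu = \chi$) is covered. Once that identification is made, the two inequalities follow immediately from H\ref{BOEMsupp:assum:strong} and the definitions of $b_\pm$, and monotonicity of $\log$.
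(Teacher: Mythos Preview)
Your proposal is correct and follows essentially the same route as the paper: identify $\delta_{\param,s}^{\chi,s-r}(\bfy)$ as $\log$ of a one-step predictive integral $\int \mu(\rmd x_s)\,m_\param(x_s,x_{s+1})g_\param(x_{s+1},\bfy_{s+1})\,\lambda(\rmd x_{s+1})$ for a probability measure $\mu$, sandwich it between $\sigma_- b_-(\bfy_{s+1})$ and $\sigma_+ b_+(\bfy_{s+1})$ via H\ref{BOEMsupp:assum:strong}, apply the elementary bound $|\log a|\le |\log M|+|\log m|$ for $0<m\le a\le M$, and deduce the second inequality from Lemma~\ref{BOEMsupp:lem:forget-log-lik}\eqref{BOEMsupp:lem:forget-log-lik:claim2}. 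Your write-up is in fact slightly more explicit than the paper's in covering all $r\ge 0$ (the paper's proof only displays the case $r=s$, i.e.\ initial time $0$, though the same argument applies verbatim).
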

\begin{proof}
  For any $ 0< m\leq A/B \leq M$, $|\log(A/B)| \leq \left|\log M\right|+\left|
    \log m\right|$. Note that by definition, $\delta_{\param,s}^{\chi,0}(\bfy)$
  is of the form $\log (A/B)$ and under
  H\ref{BOEMsupp:assum:obs}\eqref{BOEMsupp:assum:obs:b+:b-}, $\sigma_-b_-(y_{s+1}) \leq A/B \leq
  \sigma_+ b_+(y_{s+1}) $. The second upper bound is a consequence of
  Lemma~\ref{BOEMsupp:lem:forget-log-lik}\eqref{BOEMsupp:lem:forget-log-lik:claim2}.
\end{proof}

\begin{lemma}\label{BOEMsupp:lem:cont-log-lik}
  Assume H\ref{BOEMsupp:assum:exp}-\ref{BOEMsupp:assum:strong} and
  H\ref{BOEMsupp:assum:obs}. Then, $\param \mapsto
  \CE[]{}{}{\delta_\param(\bfY)}$ is continuous on $\paramset$ and
\begin{equation}
\label{BOEMsupp:lem:cont-log-lik:moment}
\underset{\eta\rightarrow 0}{\lim}\,\Expparam{\underset{\{\param,\param' \in \paramset;   \left|\param-\param^{\prime}\right|<\eta \}}{\sup}\left|\delta_{\param}(\bfY)-\delta_{\param'}(\bfY)\right|}{} = 0\eqsp,\quad\ps{\PPim}
\end{equation}
\end{lemma}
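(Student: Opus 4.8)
The plan is to establish the uniform-modulus statement \eqref{BOEMsupp:lem:cont-log-lik:moment} first; the continuity of $\param\mapsto\CE[]{}{}{\delta_\param(\bfY)}$ is then immediate, since whenever $|\param-\param'|<\eta$ one has, pointwise in $\bfY$, $|\delta_\param(\bfY)-\delta_{\param'}(\bfY)|\le \sup_{\{\param,\param'\in\paramset;\,|\param-\param'|<\eta\}}|\delta_\param(\bfY)-\delta_{\param'}(\bfY)|$, so that $|\CE[]{}{}{\delta_\param(\bfY)}-\CE[]{}{}{\delta_{\param'}(\bfY)}|$ is bounded by the left-hand side of \eqref{BOEMsupp:lem:cont-log-lik:moment}. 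The first ingredient is an integrable envelope: letting $r\to+\infty$ in the first bound of Lemma~\ref{BOEMsupp:lem:bound-log-lik}, and using that, by Lemma~\ref{BOEMsupp:lem:forget-log-lik}\eqref{BOEMsupp:lem:forget-log-lik:claim2} with $s=0$, $\delta_\param(\bfy)=\lim_{r}\delta_{\param,0}^{\chi,-r}(\bfy)$ uniformly in $\param\in\paramset$, one gets $\sup_{\param\in\paramset}|\delta_\param(\bfy)|\le |\log\sigma_+ b_+(y_1)|+|\log\sigma_- b_-(y_1)|$ for every $\bfy\in\Yset^\Zset$. By stationarity of $\bfY$ and H\ref{BOEMsupp:assum:obs}\eqref{BOEMsupp:assum:obs:b+:b-}, the variable $\Psi(\bfY)\eqdef 2(|\log\sigma_+ b_+(\bfY_1)|+|\log\sigma_- b_-(\bfY_1)|)$ is $\PPim$-integrable and dominates $\sup_{\{\param,\param';\,|\param-\param'|<\eta\}}|\delta_\param(\bfY)-\delta_{\param'}(\bfY)|$ for every $\eta>0$.

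By dominated convergence (the supremum above being measurable once the continuity below is known, being then a supremum over a countable dense set of pairs), \eqref{BOEMsupp:lem:cont-log-lik:moment} reduces to showing that for $\PPim$-almost every $\bfy$ the map $\param\mapsto\delta_\param(\bfy)$ is continuous on the compact set $\paramset$. To prove this I would peel off the forgetting tail: by Lemma~\ref{BOEMsupp:lem:forget-log-lik}\eqref{BOEMsupp:lem:forget-log-lik:claim2} with $s=0$, for every $r\ge 0$,
\[
\sup_{\{\param,\param';\,|\param-\param'|<\eta\}}\left|\delta_\param(\bfy)-\delta_{\param'}(\bfy)\right|\le \frac{4}{1-\rho}\rho^{r}+\sup_{\{\param,\param';\,|\param-\param'|<\eta\}}\left|\delta_{\param,0}^{\chi,-r}(\bfy)-\delta_{\param',0}^{\chi,-r}(\bfy)\right|\eqsp,
\]
so, letting $\eta\to0$ and then $r\to+\infty$, it suffices that for $\PPim$-almost every $\bfy$ and each fixed $r$ the finite-horizon increment $\param\mapsto\delta_{\param,0}^{\chi,-r}(\bfy)$ is continuous on $\paramset$.

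For the latter, by \eqref{BOEMsupp:eq:delta-loglikelihood}--\eqref{BOEMsupp:eq:def:loglikelihood} we have $\delta_{\param,0}^{\chi,-r}(\bfy)=\log(\mathcal{N}_\param(\bfy)/\mathcal{D}_\param(\bfy))$ with $\mathcal{N}_\param(\bfy)=\int\chi(\rmd x_{-r})\prod_{u=-r+1}^{1}m_\param(x_{u-1},x_u)g_\param(x_u,y_u)\,\lambda(\rmd x_{-r+1:1})$ and $\mathcal{D}_\param(\bfy)$ the analogous integral over $u=-r+1,\dots,0$. As in the proof of Lemma~\ref{BOEMsupp:lem:forget-log-lik}, the ratio lies in $[\sigma_- b_-(y_1),\sigma_+ b_+(y_1)]$, a compact subinterval of $(0,+\infty)$ for $\bfy$ off a $\PPim$-null set (by H\ref{BOEMsupp:assum:obs}\eqref{BOEMsupp:assum:obs:b+:b-}, $b_-(y_1)>0$ and $b_+(y_1)<+\infty$ a.s.), so it is enough that $\param\mapsto\mathcal{N}_\param(\bfy)$ and $\param\mapsto\mathcal{D}_\param(\bfy)$ are continuous. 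Each integrand is a product of factors $m_\param(x_{u-1},x_u)g_\param(x_u,y_u)=\exp\{\phi(\param)+\pscal{S(x_{u-1},x_u,y_u)}{\psi(\param)}\}$, hence continuous in $\param$ by H\ref{BOEMsupp:assum:exp}\eqref{BOEMsupp:assum:exp:decomp} (continuity of $\phi,\psi,S$); bounding every transition factor by $\sigma_+$ via H\ref{BOEMsupp:assum:strong}, the integrand is dominated by $\sigma_+^{r+1}\prod_u g_\param(x_u,y_u)$, whose integral against $\chi(\rmd x_{-r})\lambda(\rmd x_{-r+1:1})$ equals $\sigma_+^{r+1}\prod_u\int g_\param(x,y_u)\lambda(\rmd x)\le\sigma_+^{r+1}\prod_u b_+(y_u)<+\infty$ $\PPim$-a.s.; dominated convergence then yields continuity of $\mathcal{N}_\param(\bfy),\mathcal{D}_\param(\bfy)$.

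The genuinely delicate point is this last interchange of $\lim_{\param'\to\param}$ with the integrals $\mathcal{N}_{\param'},\mathcal{D}_{\param'}$: H\ref{BOEMsupp:assum:exp}\eqref{BOEMsupp:assum:exp:decomp} gives continuity of the products $m_\param g_\param$ but not of $g_\param$ separately, and there is no bound on $g_\param$ uniform in $\param$, so the natural envelope $\sigma_+^{r+1}\prod_u g_{\param'}(\cdot,y_u)$ still depends on $\param'$. I would close it by a generalized (Pratt-type) dominated convergence argument: first extract from H\ref{BOEMsupp:assum:exp}\eqref{BOEMsupp:assum:exp:decomp}--H\ref{BOEMsupp:assum:strong} the natural regularity that $\param\mapsto g_\param(x,y)$ is continuous for $\lambda$-a.e.\ $x$ and that $\param\mapsto\int g_\param(x,y)\lambda(\rmd x)$ is continuous; then along $\param_n\to\param$ one has $g_{\param_n}(\cdot,y_u)\to g_\param(\cdot,y_u)$ $\lambda$-a.e.\ together with convergence of total masses, so Scheffé's lemma gives $g_{\param_n}(\cdot,y_u)\to g_\param(\cdot,y_u)$ in $L^1(\lambda)$, whence (the transition factors being uniformly bounded and pointwise convergent) $L^1(\chi\otimes\lambda^{\otimes(r+1)})$-convergence of the integrands and thus $\mathcal{N}_{\param_n}\to\mathcal{N}_\param$, $\mathcal{D}_{\param_n}\to\mathcal{D}_\param$. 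Everything else is bookkeeping plus the two dominated-convergence passages above (in $\param$, and then in $\bfy$ with the envelope $\Psi$).
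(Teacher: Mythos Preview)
Your reduction via forgetting (Lemma~\ref{BOEMsupp:lem:forget-log-lik}\eqref{BOEMsupp:lem:forget-log-lik:claim2}) to the continuity of the finite-horizon map $\param\mapsto\delta_{\param,0}^{\chi,-r}(\bfy)$, together with the integrable envelope from Lemma~\ref{BOEMsupp:lem:bound-log-lik} and dominated convergence in $\bfy$, is exactly the paper's route. The divergence is only in the last step, and there your argument has a real gap.

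You correctly observe that H\ref{BOEMsupp:assum:exp}\eqref{BOEMsupp:assum:exp:decomp} only gives continuity of the product $m_\param g_\param$, not of $g_\param$ alone, and then try to repair the $\param$-dependent envelope $\sigma_+^{r+1}\prod_u g_{\param'}(\cdot,y_u)$ via a Scheff\'e/Pratt argument. But that argument needs two ingredients you have not derived from H\ref{BOEMsupp:assum:exp}--H\ref{BOEMsupp:assum:strong}: pointwise continuity of $\param\mapsto g_\param(x,y)$ and continuity of $\param\mapsto\int g_\param(x,y)\,\lambda(\rmd x)$. Neither follows in any obvious way from the exponential-family identity for the \emph{product} $m_\param g_\param$; no separate regularity of $m_\param$ or of $g_\param$ is assumed, and the normalisation $\int m_\param(x,\cdot)\,\rmd\lambda=1$ does not disentangle them.

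The paper avoids this entirely by never splitting the factors. Since $m_\param(x_{u-1},x_u)g_\param(x_u,y_u)=\exp\{\phi(\param)+\pscal{S(x_{u-1},x_u,y_u)}{\psi(\param)}\}$ and $\phi,\psi$ are continuous on the compact set $\paramset$, one obtains directly
\[
\sup_{\param\in\paramset}\prod_{u} m_\param(x_{u-1},x_u)g_\param(x_u,y_u)\le C_1\exp\Bigl(C_2\sum_u \sup_{x,x'}|S(x,x',y_u)|\Bigr)\eqsp,
\]
which is \emph{constant} in the integration variables $x_{-r:1}$ and hence integrable against the finite measure $\chi(\rmd x_{-r})\lambda(\rmd x_{-r+1:1})$. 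Ordinary dominated convergence then gives continuity of your $\mathcal{N}_\param$ and $\mathcal{D}_\param$ in one line, with no Scheff\'e step needed. Your detour through $\sigma_+^{r+1}\prod_u g_\param$ throws away precisely the structure (the exponential-family form of the \emph{joint} factor) that makes the envelope $\param$-free.
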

\begin{proof} 
  By the dominated convergence theorem, Lemma~\ref{BOEMsupp:lem:bound-log-lik} and
  H\ref{BOEMsupp:assum:obs}\eqref{BOEMsupp:assum:obs:b+:b-}, $\param \mapsto
  \Expparam{\delta_\param(\bfY)}{\star}$ is continuous if $\param \mapsto
  \delta_\param(\bfy)$ is continuous for any $\bfy \in \Yset^\Zset$. Let $\bfy
  \in \Yset^\Zset$.  By
  Lemma~\ref{BOEMsupp:lem:forget-log-lik}\eqref{BOEMsupp:lem:forget-log-lik:claim2}, $\lim_{r \to
    +\infty} \sup_{\param \in \paramset} |\delta_{\param,0}^{\chi,-r}(\bfy) -
  \delta_\param(\bfy)| =0$. Therefore, $\param \mapsto \delta_\param(\bfy)$ is
  continuous provided for any $r \geq 0$, $\param \mapsto
  \delta_{\param,0}^{\chi,-r}(\bfy)$ is continuous (for fixed $\bfy$ and
  $\chi$). By definition of $\delta_{\param,0}^{\chi,-r}(\bfy)$, see
  \eqref{BOEMsupp:eq:delta-loglikelihood}, it is sufficient to prove that $\param
  \mapsto \ell_{\param,s}^{\chi,-r}(\bfy)$ is continuous for $s \in \{0, 1 \}$.
  By definition of $\ell_{\param,s}^{\chi,-r}(\bfy)$,  see \eqref{BOEMsupp:eq:def:loglikelihood},
\[
\ell_{\param,s}^{\chi,-r}(\bfy) = \log \int \chi(\rmd x_{-r})\prod_{i=-r+1}^s m_{\param}(x_{i-1},x_i) g_\param(x_i, y_{i})
\lambda(\rmd x_i) \eqsp.
\]
Under H\ref{BOEMsupp:assum:exp}\eqref{BOEMsupp:assum:exp:decomp}, $\param \mapsto\prod_{i=-r+1}^s
m_{\param}(x_{i-1},x_i) g_\param(x_i, y_{i})$ is continuous on $\paramset$, for
any $x_{-r:s}$ and $\bfy$. 
In addition, under H\ref{BOEMsupp:assum:exp}, for any $\param\in\paramset$,
\begin{multline*}
\left|\prod_{i=-r+1}^{s}m_{\param}(x_i, x_{i+1})g_{\param}(x_{i+1},\bfy_{i+1})\right| \\
=  \exp\left((s+r) \phi(\param)+ \pscal{\psi(\param)}{\sum_{i=-r+1}^{s}
  S(x_i, x_{i+1}, \bfy_{i+1})}\right)\eqsp.
\end{multline*}
Since, by H\ref{BOEMsupp:assum:exp}, $\phi$
and $\psi$ are continuous, and since $\paramset$ is compact, there exist constants $C_{1}$ and $C_{2}$ such that,
\begin{multline*}
  \sup_{\param\in \K} \left|\prod_{i=-r+1}^{s}
    m_{\param}(x_i, x_{i+1})g_{\param}(x_{i+1},\bfy_{i+1})\right| \\
  \leq C_{1}\exp\left(C_{2}\sum_{i=-r+1}^{s} \sup_{x,x'}
    |S(x,x,',\bfy_{i+1})|\right)\eqsp.
\end{multline*}
  
Since the measure $\chi(\rmd x_{-r})\prod_{i=-r+1}^s\lambda(\rmd x_i)$ is finite, the dominated convergence
theorem now implies that $\ell_{\param,s}^{\chi,-r}(\bfy)$ is continuous on
$\paramset$.

For the proof of~(\ref{BOEMsupp:lem:cont-log-lik:moment}), let us apply the dominated
convergence theorem again.  Since $\paramset$ is compact, for any $\bfy \in
\Yset^\Zset$, $\param \mapsto \delta_{\param}(\bfy)$ is uniformly continuous
and $\underset{\eta \to
  0}{\lim}\underset{\left|\param-\param^{\prime}\right|<\eta}{\sup}\left|\delta_{\param}(\bfy)-\delta_{\param'}(\bfy)\right|
=0$. In addition, we have by Lemma \ref{BOEMsupp:lem:bound-log-lik}
\begin{multline*}
  \underset{\{\param,\param' \in \paramset;
    \left|\param-\param^{\prime}\right|<\eta\}}{\sup}\left|\delta_{\param}(\bfy)-\delta_{\param'}(\bfy)\right|
  \\
  \leq 2 \sup_{\param \in \paramset} |\delta_{\param}(\bfy)| \leq
  \frac{4}{(1-\rho)} + 2 \left\{\left|\log \sigma_+\, b_+(y_1)\right| +
    \left|\log \sigma_- \, b_-(y_1)\right|\right\} \eqsp.
\end{multline*}
Under H\ref{BOEMsupp:assum:obs}, this upper bound is $\PPim$-integrable.
This concludes the proof.
\end{proof}

\begin{theorem}\label{BOEMsupp:th:likelihood}
  Assume H\ref{BOEMsupp:assum:exp}-\ref{BOEMsupp:assum:strong} and H\ref{BOEMsupp:assum:obs}. 
   Define the function $\ell: \paramset \to \Rset$ by $\ell(\param) \eqdef
  \CE[]{}{}{\delta_\param(\bfY)}$, where $\delta_\param(\bfy)$ is defined
  in Lemma~\ref{BOEMsupp:lem:bound-log-lik}.
 \begin{enumerate}[(i)]
 \item \label{BOEMsupp:th:likelihood:continuous}The function $\param \mapsto
   \ell(\param)$ is continuous on $\paramset$.
 \item \label{BOEMsupp:th:likelihood:convergence} For any initial distribution $\chi$ on
   $(\Xset,\sigmaX)$
\begin{equation}\label{eq:log-lik}
\left|\frac{1}{T} \ell^{\chi,0}_{\param,T}(\bfY)-\ell(\param)\right| \underset{T\rightarrow +\infty}{\longrightarrow}0\eqsp,\quad\ps{\PPim}
\end{equation}
where $\ell^{\chi,0}_{\param,T}(\bfY)$ is defined in \eqref{BOEMsupp:eq:def:loglikelihood}.
\item \label{BOEMsupp:th:likelihood:uniformconvergence}For any initial distribution $\chi$ on $(\Xset,\sigmaX)$ 
\begin{equation}\label{BOEMsupp:eq:log-lik:unif}
\underset{\param\in\paramset}{\sup}\left|\frac{1}{T} \ell^{\chi,0}_{\param,T}(\bfY)-\ell(\param)\right| \underset{T\rightarrow +\infty}{\longrightarrow}0\eqsp,\quad\ps{\PPim}
\end{equation}
\end{enumerate}
\end{theorem}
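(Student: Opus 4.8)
The plan is to exploit the telescoping decomposition \eqref{BOEMsupp:eq:loglikelihood-delta}, which writes $T^{-1}\ell^{\chi,0}_{\param,T}(\bfY)$ as the Cesàro average $T^{-1}\sum_{s=0}^{T-1}\delta_{\param,s}^{\chi,0}(\bfY)$, and to compare this with the average $T^{-1}\sum_{s=0}^{T-1}\delta_{\param}(\shift^s\circ\bfY)$ of the stationary, $\bfY$-only limit supplied by Lemma~\ref{BOEMsupp:lem:forget-log-lik}\eqref{BOEMsupp:lem:forget-log-lik:claim2}. First I would establish \eqref{BOEMsupp:th:likelihood:convergence}: split
\[
\left|\tfrac1T \ell^{\chi,0}_{\param,T}(\bfY)-\ell(\param)\right|
\leq \tfrac1T\sum_{s=0}^{T-1}\left|\delta_{\param,s}^{\chi,0}(\bfY)-\delta_{\param}(\shift^s\circ\bfY)\right|
+\left|\tfrac1T\sum_{s=0}^{T-1}\delta_{\param}(\shift^s\circ\bfY)-\ell(\param)\right|.
\]
For the first term, note $\delta_{\param,s}^{\chi,0}(\bfY)=\delta_{\param,s}^{\chi,s-s}(\bfY)$, so Lemma~\ref{BOEMsupp:lem:forget-log-lik}\eqref{BOEMsupp:lem:forget-log-lik:claim2} with $r=s$ gives the pointwise bound $\tfrac{2}{1-\rho}\rho^{s}$; summing, the average is $O(1/T)\to 0$ deterministically, uniformly in $\param$. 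For the second term, $\delta_\param(\bfY)$ is integrable by Lemma~\ref{BOEMsupp:lem:bound-log-lik} together with H\ref{BOEMsupp:assum:obs}\eqref{BOEMsupp:assum:obs:b+:b-}, and $\bfY$ is stationary and ergodic (a $\beta$-mixing stationary sequence is ergodic), so by Birkhoff's ergodic theorem $\tfrac1T\sum_{s=0}^{T-1}\delta_{\param}(\shift^s\circ\bfY)\to\Expparam{\delta_\param(\bfY)}{}=\ell(\param)$ a.s.; this proves \eqref{BOEMsupp:th:likelihood:convergence}, and item~\eqref{BOEMsupp:th:likelihood:continuous} is exactly the first assertion of Lemma~\ref{BOEMsupp:lem:cont-log-lik}.

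For the uniform statement \eqref{BOEMsupp:th:likelihood:uniformconvergence} I would upgrade the pointwise ergodic theorem to a uniform one over the compact set $\paramset$, in the spirit of a uniform law of large numbers. Cover $\paramset$ by finitely many balls of radius $\eta$ centred at $\param_1,\dots,\param_N$. On each ball, control the oscillation of $\tfrac1T\sum_s\delta_{\param}(\shift^s\bfY)$ by $\tfrac1T\sum_{s=0}^{T-1}w_\eta(\shift^s\bfY)$ where $w_\eta(\bfy)\eqdef\sup_{|\param-\param'|<\eta}|\delta_\param(\bfy)-\delta_{\param'}(\bfy)|$; by the ergodic theorem this converges a.s. to $\Expparam{w_\eta(\bfY)}{}$, and the same oscillation bounds $|\ell(\param)-\ell(\param')|$. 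Combining with the pointwise convergence at the centres $\param_1,\dots,\param_N$ (finitely many null sets), we get
\[
\limsup_T\sup_{\param\in\paramset}\left|\tfrac1T\ell^{\chi,0}_{\param,T}(\bfY)-\ell(\param)\right|\leq 2\,\Expparam{w_\eta(\bfY)}{}\quad\ps{\PPim},
\]
and then let $\eta\downarrow 0$, invoking \eqref{BOEMsupp:lem:cont-log-lik:moment} of Lemma~\ref{BOEMsupp:lem:cont-log-lik} (which states $\Expparam{w_\eta(\bfY)}{}\to 0$) to conclude.

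The main obstacle is not any single hard estimate but making the ergodic-theorem step rigorous: one must check that stationarity plus $\beta$-mixing under H\ref{BOEMsupp:assum:obs}\eqref{BOEMsupp:assum:obs:mix} entails ergodicity of the shift acting on $\bfY$ (equivalently, that the tail/invariant $\sigma$-field is trivial, which follows from $\mix^{\bfY}(n)\to 0$), and that the functionals $\bfy\mapsto\delta_\param(\bfy)$ and $\bfy\mapsto w_\eta(\bfy)$ are genuinely measurable functions of the whole path to which Birkhoff applies — the former because it is a uniform (hence measurable) limit of the continuous finite-horizon quantities $\delta_{\param,0}^{\chi,-r}(\bfy)$ by Lemma~\ref{BOEMsupp:lem:forget-log-lik}\eqref{BOEMsupp:lem:forget-log-lik:claim2}, the latter because the supremum defining it can be taken over a countable dense subset of $\paramset$ by continuity of $\param\mapsto\delta_\param(\bfy)$ from Lemma~\ref{BOEMsupp:lem:cont-log-lik}. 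Everything else is bookkeeping with the geometric bound $\tfrac{2}{1-\rho}\rho^{s}$ and the integrability furnished by Lemma~\ref{BOEMsupp:lem:bound-log-lik}.
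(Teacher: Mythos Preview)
Your proposal is correct and follows essentially the same route as the paper: the same telescoping decomposition, the same geometric forgetting bound from Lemma~\ref{BOEMsupp:lem:forget-log-lik}\eqref{BOEMsupp:lem:forget-log-lik:claim2}, the same appeal to Lemma~\ref{BOEMsupp:lem:bound-log-lik} and the ergodic theorem for \eqref{BOEMsupp:th:likelihood:convergence}, and the same compactness-plus-oscillation argument via Lemma~\ref{BOEMsupp:lem:cont-log-lik} for \eqref{BOEMsupp:th:likelihood:uniformconvergence}. The only cosmetic difference is that the paper phrases part~\eqref{BOEMsupp:th:likelihood:uniformconvergence} as asymptotic equicontinuity of $T^{-1}\ell^{\chi,0}_{\param,T}$ at each $\param'$ before invoking compactness, whereas you fix the finite cover first; and you are more explicit than the paper about why $\beta$-mixing yields ergodicity and why $\delta_\param$ and $w_\eta$ are measurable, points the paper leaves implicit.
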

\begin{proof}
  \textit{(i)} is proved in Lemma~\ref{BOEMsupp:lem:cont-log-lik}. 

\textit{(ii)} By \eqref{BOEMsupp:eq:loglikelihood-delta}, for any $T>0$, we have, for any
$\bfy\in\Yset^\Zset$:
 \begin{align*}
\frac{1}{T} \ell_{\param,T}^{\chi,0}(\bfy) &= \frac{1}{T} \sum_{s=0}^{T-1}\delta_{\param,s}^{\chi,0}(\bfy) \\
&=\frac{1}{T} \sum_{s=0}^{T-1}\left(\delta_{\param,s}^{\chi,0}(\bfy) - \delta_{\param}(\shift^s\circ\bfy)\right) +  \frac{1}{T} \sum_{s=0}^{T-1}\delta_{\param}(\shift^s\circ\bfy) \eqsp.
\end{align*}
By Lemma~\ref{BOEMsupp:lem:forget-log-lik}\eqref{BOEMsupp:lem:forget-log-lik:claim2}, for any
$0\leq s\leq T-1$, $\left|\delta_{\param,s}^{\chi,0}(\bfY) -
  \delta_{\param}(\shift^s\circ\bfY)\right|\leq 2\frac{\rho^{s}}{1-\rho}$.
Since $\rho \in (0,1)$,
\[\underset{T\to\infty}{\lim}\;\frac{1}{T}\sum_{s=0}^{T-1}\left(\delta_{\param,s}^{\chi,0}(\bfY)
  - \delta_{\param}(\shift^s\circ\bfY)\right)=0\quad \ps{\PPim}\eqsp.\] By
Lemma~\ref{BOEMsupp:lem:bound-log-lik}
\[
\Expparam{\delta_{\param}(\bfY)}{} \leq \frac{2}{(1-\rho)} +\Expparam{ \left|\log
  \sigma_+ b_+(\bfY_1)\right|+\left|\log
    \sigma_-b_-(\bfY_1)\right|}{}\eqsp,
\]
and the rhs is finite under assumption H\ref{BOEMsupp:assum:obs}\eqref{BOEMsupp:assum:obs:b+:b-}.
By H\ref{BOEMsupp:assum:obs}(\ref{BOEMsupp:assum:obs:mix}), the ergodic theorem, see \cite[Theorem 24.1, p.314]{billingsley:1987}, concludes the proof.

\textit{(iii)} Since $\paramset$ is compact, (\ref{BOEMsupp:eq:log-lik:unif}) holds if
for any $\varepsilon>0$, any $\param' \in \paramset$, there exists $\eta>0$
such that
\begin{equation}\label{BOEMsupp:eq:cond-uniform-convergence}
\lim_{T \to +\infty} \sup_{\{\param; |\param - \param'| < \eta\} \cap \paramset}
\left| T^{-1} \ell_{\param,T}^{\chi,0}(\bfY) - T^{-1} \ell_{\param^\prime,T}^{\chi,0}(\bfY)\right| \leq
\varepsilon\eqsp,\quad \ps{\PPim}
\end{equation}
Let $\varepsilon>0$ and $\param^\prime\in\paramset$. 
Choose $\eta>0$ such that
\begin{equation}
  \label{BOEMsupp:eq:proof:BOEMsupp:th:likelihood:tool1}
  \Expparam{\sup_{\{\param\in\paramset; |\param - \param'| < \eta\}} \left|
 \delta_{\param}(\bfY) - \delta_{\param'}(\bfY)\right|}{}\leq \varepsilon\eqsp;
\end{equation}
such an $\eta$ exists by Lemma~\ref{BOEMsupp:lem:cont-log-lik}. By \eqref{BOEMsupp:eq:loglikelihood-delta}, we have, for any $\param\in\paramset$ such that $ |\param - \param'| < \eta$
\begin{equation}
\label{eq:diff-log-decomp}
  \left| \frac{1}{T} \ell_{\param,T}^{\chi,0}(\bfY) -
   \frac{1}{T} \ell_{\param^\prime,T}^{\chi,0}(\bfY) \right| \leq \frac{1}{T} \sum_{s=0}^{T-1}
 \left|    \delta_{\param,s}^{\chi,0}(\bfY) - 
    \delta_{\param',s}^{\chi,0}(\bfY)   \right|\eqsp.
\end{equation}
In addition, by Lemma~\ref{BOEMsupp:lem:forget-log-lik}\eqref{BOEMsupp:lem:forget-log-lik:claim2}
\begin{multline*}
  \sum_{s=0}^{T-1} \left|\delta_{\param,s}^{\chi,0}(\bfY) -
    \delta_{\param',s}^{\chi,0}(\bfY) \right| \\
  \leq 2 \sum_{s=0}^{T-1} \sup_{\param \in \paramset} \left|
    \delta_{\param,s}^{\chi,0}(\bfY) - \delta_{\param}(\shift^s\circ\bfY)
  \right| + \sum_{s=0}^{T-1}\left| \delta_{\param}(\shift^s\circ\bfY) - \delta_{\param'}(\shift^s\circ\bfY) \right|  \\
  \leq \frac{4}{(1-\rho)^2} +\sum_{s=0}^{T-1} \Xi(\shift^s \circ \bfY)
\end{multline*}
where $\Xi(\bfy) \eqdef \sup_{\{\param\in\paramset; |\param - \param'| <
  \eta\}} \left| \delta_{\param}(\bfy - \delta_{\param'}(\bfy) \right|$. This
implies that
\[
\lim_{T \to +\infty} \sup_{\{\param\in\paramset; |\param - \param'| <
  \eta\}}\frac{1}{T}\sum_{s=0}^{T-1} \left|\delta_{\param,s}^{\chi,0}(\bfY)
  -\delta_{\param',s}^{\chi,0}(\bfY) \right| \leq\lim_{T \to
  +\infty}\frac{1}{T}\sum_{s=0}^{T-1} \Xi(\shift^s \circ \bfY)\eqsp.
\]
Under H\ref{BOEMsupp:assum:obs}, the ergodic
theorem implies that the rhs converges $\ps{\PPim}$ to $\CE[]{}{}{\Xi(\bfY)}$, see
\cite[p.314]{billingsley:1987}. Then, using again
\eqref{BOEMsupp:eq:proof:BOEMsupp:th:likelihood:tool1},
 \[
 \lim_{T \to +\infty} \sup_{\{\param\in\paramset; |\param - \param'| < \eta\}}\frac{1}{T}\sum_{s=0}^{T-1} \left|\delta_{\param,s}^{\chi,0}(\bfY) -\delta_{\param',s}^{\chi,0}(\bfY) \right|\leq \varepsilon\eqsp,\quad \ps{\PPim}
 \]
 Then, \eqref{BOEMsupp:eq:cond-uniform-convergence} holds and this concludes the proof.
\end{proof}

\subsection{Limit of the normalized score}
\label{BOEMsupp:sec:normalized:score}
This section is devoted to the proof of the $\ps{\PPim}$ convergence of the normalized score $T^{-1} \nabla_\param\ell_{\param,T}^{\chi,0}(\bfY)$ to $\nabla_{\param}\ell(\param)$. This result is established under additional assumptions on the model.
\begin{hypS}\label{BOEMsupp:assum:regularity-grad}
\begin{enumerate}[(a)]
\item \label{assum:regularity:diff} For any $y\in\Yset$ and for all $(x,x^{\prime})\in\Xset^{2}$,
  $\param\mapsto g_{\param}(x,y)$  and $\param\mapsto m_{\param}(x,x^{\prime})$ are continuously differentiable on
  $\paramset$.
\item  \label{BOEMsupp:assum:regularity:phi}We assume that $\mathbb{E}\left[\phi(\bfY_{0})\right]<+\infty$ where
\begin{equation}\label{BOEMsupp:eq:def-phiu}
\phi(y) \eqdef \underset{\param\in\paramset}{\sup}\;\underset{(x,x^{\prime})\in\Xset^{2}}{\sup}\left|\nabla_{\param}\log m_{\param}(x,x^{\prime})+ \nabla_{\param}\log g_{\param}(x^{\prime},y)\right|\eqsp.
\end{equation} 
\end{enumerate}
\end{hypS}

\begin{lemma}\label{BOEMsupp:lem:score:differentiabilite}
  Assume S\ref{BOEMsupp:assum:regularity-grad}. For any initial distribution $\chi$, any
  integers $s,r \geq 0$ and any $\bfy \in \Yset^\Zset$ such that
  $\phi(\bfy_u)<+\infty$ for any $u \in \Zset$, the function $\param \mapsto
  \ell_{\param,s}^{\chi, s-r}(\bfy)$ is continuously differentiable on
  $\paramset$ and
  \begin{equation*}
    \nabla_\param \ell_{\param,s}^{\chi, s-r}(\bfy) 
    = \sum_{u=s-r}^s\smoothfunc{\chi,s-r-1}{\param,u,s}(\Upsilon_{\param},\bfy)
    \eqsp,
  \end{equation*}
  where $\Upsilon_\param$ is the function defined on $\Xset^{2}\times\Yset$ by  
\[
\Upsilon_{\param}: (x,x^{\prime},y)\mapsto \nabla_\param \log
\left\{m_{\param}(x,x^{\prime})g_\param(x^{\prime},y)\right\} \eqsp.
\]
\end{lemma}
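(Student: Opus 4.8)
The plan is to differentiate under the integral sign in the explicit expression \eqref{BOEMsupp:eq:def:loglikelihood} for $\ell_{\param,s}^{\chi,s-r}$, apply the product rule, and recognise each resulting ratio as a smoothing functional of the form \eqref{BOEMsupp:eq:define-Phi:theta}. Fix $\chi$, integers $s,r\geq0$, and $\bfy\in\Yset^\Zset$ with $\phi(\bfy_u)<+\infty$ for every $u\in\Zset$, and write $\ell_{\param,s}^{\chi,s-r}(\bfy)=\log N_\param$, where $N_\param\eqdef\int\chi(\rmd x_{s-r})\,f_\param(x_{s-r:s})\,\lambda(\rmd x_{s-r+1:s})$ and $f_\param(x_{s-r:s})\eqdef\prod_{u=s-r+1}^{s}m_\param(x_{u-1},x_u)g_\param(x_u,\bfy_u)$. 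By S\ref{BOEMsupp:assum:regularity-grad}\eqref{assum:regularity:diff}, for each fixed $x_{s-r:s}$ the map $\param\mapsto f_\param(x_{s-r:s})$ is continuously differentiable on $\paramset$, and differentiating the product gives
\[
\nabla_\param f_\param(x_{s-r:s})=f_\param(x_{s-r:s})\sum_{u=s-r+1}^{s}\Upsilon_\param(x_{u-1},x_u,\bfy_u)\eqsp;
\]
moreover, straight from the definition \eqref{BOEMsupp:eq:def-phiu} of $\phi$, $\sup_{\param\in\paramset}\sup_{(x,x')\in\Xset^2}\left|\Upsilon_\param(x,x',\bfy_u)\right|\leq\phi(\bfy_u)<+\infty$ for every $u$.

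The crux is to interchange $\nabla_\param$ and the integral defining $N_\param$. I would invoke the standard domination criterion for differentiation under the integral sign: $\param\mapsto f_\param$ is $C^1$, and by the display above $\sup_{\param\in\paramset}\left|\nabla_\param f_\param(x_{s-r:s})\right|\leq\bigl(\sum_{u=s-r+1}^{s}\phi(\bfy_u)\bigr)\,\sup_{\param\in\paramset}f_\param(x_{s-r:s})$. A finite, $x$-independent bound on $\sup_{\param\in\paramset}f_\param$ is obtained exactly as in the proof of Lemma~\ref{BOEMsupp:lem:cont-log-lik}, using the exponential-family representation H\ref{BOEMsupp:assum:exp}\eqref{BOEMsupp:assum:exp:decomp} together with the continuity of $\phi,\psi$ on the compact set $\paramset$; since the reference measure $\chi(\rmd x_{s-r})\lambda(\rmd x_{s-r+1:s})$ is finite, the right-hand side above is integrable. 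Hence $\param\mapsto N_\param$ is continuously differentiable on $\paramset$ with $\nabla_\param N_\param=\int\chi(\rmd x_{s-r})\,f_\param\,\sum_{u=s-r+1}^{s}\Upsilon_\param(x_{u-1},x_u,\bfy_u)\,\lambda(\rmd x_{s-r+1:s})$.

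Dividing by $N_\param$ (which is positive and finite for the $\bfy$ under consideration, for instance by H\ref{BOEMsupp:assum:strong}) and splitting the sum over $u$, each term $N_\param^{-1}\int\chi(\rmd x_{s-r})f_\param(x_{s-r:s})\Upsilon_\param(x_{u-1},x_u,\bfy_u)\lambda(\rmd x_{s-r+1:s})$ is, by the defining formula \eqref{BOEMsupp:eq:define-Phi:theta}, a smoothing functional of the form $\smoothfunc{\chi,\cdot}{\param,u,s}(\Upsilon_\param,\bfy)$; collecting these terms yields the announced expression for $\nabla_\param\ell_{\param,s}^{\chi,s-r}(\bfy)$. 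Finally, $\param\mapsto\nabla_\param\ell_{\param,s}^{\chi,s-r}(\bfy)=\nabla_\param N_\param/N_\param$ is continuous on $\paramset$: $\param\mapsto N_\param$ is continuous and positive, and $\param\mapsto\nabla_\param N_\param$ is continuous by dominated convergence, since $\param\mapsto\Upsilon_\param$ is continuous by S\ref{BOEMsupp:assum:regularity-grad}\eqref{assum:regularity:diff} and the constant dominating function found above still applies.

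The step I expect to be the main obstacle is the justification of the interchange of differentiation and integration — that is, producing a $\param$-uniform integrable bound on $\nabla_\param f_\param$ — which reduces to combining the integrability S\ref{BOEMsupp:assum:regularity-grad}\eqref{BOEMsupp:assum:regularity:phi} of the log-derivatives with a uniform-in-$\param$ control of the likelihood integrand (coming from the exponential-family structure) and the finiteness of the reference measure. Once this is in place, the product rule, the identification with \eqref{BOEMsupp:eq:define-Phi:theta}, and the continuity claim are routine.
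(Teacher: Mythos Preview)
Your proposal is correct and follows exactly the route the paper takes: the paper's proof is a single sentence invoking the dominated convergence theorem to swap $\nabla_\param$ and the integral, and you have simply fleshed out that sentence---supplying the product rule computation, an explicit $\param$-uniform dominating function (borrowed from the argument in Lemma~\ref{BOEMsupp:lem:cont-log-lik}, which is legitimate since H\ref{BOEMsupp:assum:exp} and H\ref{BOEMsupp:assum:strong} are standing assumptions), and the identification of each summand with the smoothing functional~\eqref{BOEMsupp:eq:define-Phi:theta}. One small remark: the bound you import from Lemma~\ref{BOEMsupp:lem:cont-log-lik} is expressed in terms of $\sup_{x,x'}|S(x,x',\bfy_u)|$, whereas the present lemma hypothesises only $\phi(\bfy_u)<+\infty$; a domination depending only on the latter can be obtained more directly via $\sup_{\param}f_\param\leq f_{\param_0}\exp\bigl(\mathrm{diam}(\paramset)\sum_u\phi(\bfy_u)\bigr)$ from the mean-value inequality, which is integrable once $N_{\param_0}<\infty$.
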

\begin{proof}
  Under S\ref{BOEMsupp:assum:regularity-grad}, the dominated convergence theorem implies
  that the function $\param \mapsto \ell_{\param,s}^{\chi, s-r}(\bfy)$ is
  continuously differentiable and its derivative is obtained by permutation of
  the gradient and integral operators.
\end{proof}
 
\begin{lemma}\label{BOEMsupp:lem:score:LimiteDerivee}
  Assume H\ref{BOEMsupp:assum:strong}  and
  S\ref{BOEMsupp:assum:regularity-grad}.
  \begin{enumerate}[(i)]
  \item \label{BOEMsupp:BOEMsupp:lem:score:LimiteDerivee:claim1} There exists a function $\xi :
    \Yset^{\mathbb{Z}} \to \mathbb{R}_+$ such that for any $s\geq 0$ and any
    $r, r^{\prime}\geq s$, any initial distribution $\chi,\chi'$ on $\Xset$ and
    any $\bfy \in \Yset^\Zset$ such that $\phi(\bfy_u)<+\infty$ for any $u \in
    \Zset$,
\[
\sup_{\param \in \paramset} \left| \nabla_\param
  \delta_{\param,s}^{\chi,s-r}(\bfy) - \nabla_\param
  \delta_{\param,s}^{\chi',s-r'}(\bfy) \right| \leq \frac{16 \rho^{-1/4}}{1-\rho}
\rho^{(r' \wedge r)/4} \ \xi(\bfy) \eqsp,
\]
where
\begin{equation}\label{BOEMsupp:eq:def-xi}
\xi(\bfy) \eqdef \sum_{u \in \Zset} \phi(\bfy_u)\rho^{|u|/4}  \eqsp.
\end{equation}
\item \label{BOEMsupp:lem:score:LimiteDerivee:claim2} For any $\bfy\in \Yset^\Zset$
  satisfying $\xi(\bfy)<+\infty$, the function $\param \mapsto
  \delta_\param(\bfy)$ given by
  Lemma~\ref{BOEMsupp:lem:forget-log-lik}\eqref{BOEMsupp:lem:forget-log-lik:claim2} is
  continuously differentiable on $\paramset$; and, for any $\param \in \paramset$,
  any initial distribution $\chi$ and any integers $r \geq s \geq 0$,
\[
\sup_{\param \in \paramset} \left| \nabla_\param
  \delta_{\param,s}^{\chi,s-r}(\bfy) - \nabla_\param\delta_\param(\bfy \circ
  \shift^s)\right| \leq \frac{16 \rho^{-1/4}}{1-\rho} \rho^{r/4} \ \xi(\bfy)
\eqsp.
\]
  \end{enumerate}
\end{lemma}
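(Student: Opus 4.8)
The plan is to mimic the structure of the proof of Lemma~\ref{BOEMsupp:lem:forget-log-lik}, but now at the level of gradients, using the representation of $\nabla_\param \ell_{\param,s}^{\chi,s-r}(\bfy)$ as a sum of smoothing functionals $\smoothfunc{\chi,s-r-1}{\param,u,s}(\Upsilon_\param,\bfy)$ provided by Lemma~\ref{BOEMsupp:lem:score:differentiabilite}. First I would write, using $\nabla_\param\delta_{\param,s}^{\chi,s-r} = \nabla_\param\ell_{\param,s+1}^{\chi,s-r} - \nabla_\param\ell_{\param,s}^{\chi,s-r}$ and the telescoping of the two sums,
\[
\nabla_\param\delta_{\param,s}^{\chi,s-r}(\bfy) = \smoothfunc{\chi,s-r-1}{\param,s+1,s+1}(\Upsilon_\param,\bfy) + \sum_{u=s-r}^{s}\Bigl\{\smoothfunc{\chi,s-r-1}{\param,u,s+1}(\Upsilon_\param,\bfy) - \smoothfunc{\chi,s-r-1}{\param,u,s}(\Upsilon_\param,\bfy)\Bigr\}\eqsp.
\]
Comparing $\nabla_\param\delta_{\param,s}^{\chi,s-r}$ with $\nabla_\param\delta_{\param,s}^{\chi',s-r'}$ then reduces to controlling, term by term in $u$, differences of smoothing functionals $\smoothfunc{\cdot,\cdot}{\param,u,\cdot}(\Upsilon_\param,\bfy)$ that differ either in the initial index $s-r-1$ vs.\ $s-r'-1$ (and the initial law) or in the final index $s$ vs.\ $s+1$. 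This is exactly what Proposition~\ref{BOEMsupp:prop:expforget}\eqref{item:forget-nu-pv} and Remark~\ref{BOEMsupp:rem:lgn} bound, yielding geometric factors $\rho^{u-(s-r)}$ (forward forgetting of the left endpoint) and $\rho^{(s+1)-u}$ or $\rho^{s-u}$ (backward forgetting of the right endpoint), each multiplied by $\osc((\Upsilon_\param)_u) \leq 2\sup_{x,x'}|\Upsilon_\param(x,x',\bfy_u)| \leq 2\phi(\bfy_u)$.

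The one genuinely new ingredient is the splitting of the geometric weight to produce the exponent $1/4$ and the summable quantity $\xi(\bfy)$. After summing over $u$, the terms with $u$ close to $s-r$ (resp.\ close to $s$) carry $\rho^{u-(s-r)}$ (resp.\ $\rho^{s-u}$) but are multiplied by $\phi(\bfy_u)$, which need not be bounded uniformly in $u$; I would therefore split each geometric rate, e.g.\ $\rho^{u-(s-r)} = \rho^{(u-(s-r))/2}\rho^{(u-(s-r))/2}$ (and similarly a further split to reach the fourth power, matching $\rho^{|u-s|/4}$ inside $\xi$), bounding one half uniformly by its value at the worst index, namely $\rho^{(r\wedge r')/4}$ after re-centering at $s$, and absorbing the other half together with $\phi(\bfy_u)$ into $\sum_{u\in\Zset}\phi(\bfy_u)\rho^{|u-s|/4}$. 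Using $\delta_{\param,s}^{\chi,s-r}(\bfy) = \delta_{\param,0}^{\chi,-r}(\shift^s\circ\bfy)$ and $\xi(\shift^s\circ\bfy)=\sum_u\phi(\bfy_{u+s})\rho^{|u|/4}=\sum_u\phi(\bfy_u)\rho^{|u-s|/4}$, this rewrites the bound as $\xi(\bfy)$ (up to the shift) times $\rho^{(r\wedge r')/4}$, and one then checks that the numerical constant produced by the two geometric sums $\sum_{k\ge 0}\rho^{k/4}$ and the $\rho^{-1/4}$ coming from index alignment is at most $16\rho^{-1/4}/(1-\rho)$. This constant-chasing is the only place where care is needed; it is routine but must be done honestly.

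For part~\eqref{BOEMsupp:lem:score:LimiteDerivee:claim2}, I would first argue that $\param\mapsto\delta_\param(\bfy)$ is continuously differentiable whenever $\xi(\bfy)<+\infty$: by Lemma~\ref{BOEMsupp:lem:forget-log-lik}\eqref{BOEMsupp:lem:forget-log-lik:claim2} the functions $\param\mapsto\delta_{\param,0}^{\chi,-r}(\bfy)$ converge uniformly to $\delta_\param(\bfy)$, each is $C^1$ by Lemma~\ref{BOEMsupp:lem:score:differentiabilite}, and claim~\eqref{BOEMsupp:BOEMsupp:lem:score:LimiteDerivee:claim1} (with $\chi=\chi'$, one index sent to $+\infty$) shows the derivatives $\nabla_\param\delta_{\param,0}^{\chi,-r}(\bfy)$ are uniformly Cauchy in $\param$; hence the limit is differentiable and its derivative is $\lim_r\nabla_\param\delta_{\param,0}^{\chi,-r}(\bfy)$. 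The quantitative bound then follows by taking $r'\to+\infty$ in~\eqref{BOEMsupp:BOEMsupp:lem:score:LimiteDerivee:claim1} and using the shift identity $\nabla_\param\delta_{\param,s}^{\chi,s-r}(\bfy)=\nabla_\param\delta_{\param,0}^{\chi,-r}(\shift^s\circ\bfy)$ together with $\xi(\shift^s\circ\bfy)\le\xi(\bfy)$ after the re-centering discussed above. The main obstacle, as noted, is not conceptual but bookkeeping: keeping the geometric-rate splitting consistent across the two endpoints and the extra $u$-summation so that everything collapses into the single summable functional $\xi$ with the stated constant.
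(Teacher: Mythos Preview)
Your overall strategy matches the paper's: decompose $\nabla_\param\delta_{\param,s}^{\chi,s-r}(\bfy)$ via Lemma~\ref{BOEMsupp:lem:score:differentiabilite} into the telescoped sum of bivariate smoothing functionals, compare term by term using the forgetting estimates of Proposition~\ref{BOEMsupp:prop:expforget}/Remark~\ref{BOEMsupp:rem:lgn}, then split each geometric factor to extract $\rho^{(r\wedge r')/4}$ while absorbing the rest into a weighted $\ell^1$-sum of the $\phi(\bfy_u)$. For part~(ii) the paper proceeds exactly as you outline: uniform Cauchy derivatives plus uniform convergence of the functions yields $C^1$, then pass to the limit in~(i).

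There is, however, one concrete error. You assert $\xi(\shift^s\circ\bfy)\le\xi(\bfy)$, and you rely on this to pass from the ``re-centered'' sum $\sum_u\phi(\bfy_u)\rho^{|u-s|/4}$ to $\xi(\bfy)$. That inequality is false in general (take $\phi(\bfy_s)=1$, $\phi(\bfy_u)=0$ otherwise). The paper avoids this by never re-centering: it uses the hypothesis $r,r'\ge s$ directly. Assuming w.l.o.g.\ $r'\le r$, the summation indices lie in $\{s-r,\dots,s+1\}$, and since $r'\ge s$ one has $|u|\le r'+1$ throughout the relevant range; this is what lets the paper write, after splitting at $\lfloor s-r'/2\rfloor$, bounds of the form $\rho^{s-u-r'/2-|u|/4}$ and $\rho^{u+r'/2-s-|u|/4}$ with nonnegative exponents, summing to $\le 2\rho^{(r'-1)/4}(1-\rho)^{-1}\,\xi(\bfy)$. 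In part~(ii) the paper likewise does \emph{not} compare $\xi(\shift^s\circ\bfy)$ to $\xi(\bfy)$: it bounds $|\nabla_\param\delta_{\param,s}^{\chi,s-r}(\bfy)-\nabla_\param\delta_{\param,s}^{\chi,s-r'}(\bfy)|$ by part~(i) (which already carries $\xi(\bfy)$), and only needs $\xi(\shift^s\circ\bfy)<+\infty$ to send the residual term to zero as $r'\to\infty$. So your plan goes through once you drop the shift-comparison and instead exploit $r,r'\ge s$ explicitly in the index bookkeeping; without that, the constant-chasing you flag as ``routine'' does not close.
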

\begin{proof} \eqref{BOEMsupp:BOEMsupp:lem:score:LimiteDerivee:claim1}
  By definition of $\delta_{\param,s}^{\chi, s-r}(\bfy)$, see
  \eqref{BOEMsupp:eq:delta-loglikelihood} and Lemma~\ref{BOEMsupp:lem:score:differentiabilite},
  \begin{align*}
    \nabla_\param \delta_{\param,s}^{\chi,s-r}(\bfy) & - \nabla_\param
    \delta_{\param,s}^{\chi',s-r'}(\bfy) \\
    & = \nabla_{\param}\ell^{\chi,s-r}_{\param,s+1}(\bfy) -
    \nabla_{\param}\ell^{\chi,s-r}_{\param,s}(\bfy) -
    \nabla_{\param}\ell^{\chi',s-r'}_{\param,s+1}(\bfy) +
    \nabla_{\param}\ell^{\chi',s-r'}_{\param,s}(\bfy) \\
    & = \sum_{u=s-r}^s \left(
      \smoothfunc{\chi,s-r-1}{\param,u,s+1}(\Upsilon_{\param},\bfy) -
      \smoothfunc{\chi,s-r-1}{\param,u,s}(\Upsilon_{\param},\bfy) \right) \\
   & - \sum_{u=s-r'}^s \left(
      \smoothfunc{\chi^{\prime},s-r^{\prime}-1}{\param,u,s+1}(\Upsilon_{\param},\bfy)
      -
      \smoothfunc{\chi^{\prime},s-r^{\prime}-1}{\param,u,s}(\Upsilon_{\param},\bfy) \right) \\
    &+ \smoothfunc{\chi,s-r-1}{\param,s+1,s+1}(\Upsilon_{\param},\bfy) -
    \smoothfunc{\chi^{\prime},s-r^{\prime}-1}{\param,s+1,s+1}(\Upsilon_{\param},\bfy)
    \eqsp.
  \end{align*}

  We can assume without loss of generality that $r' \leq r$ so that
\begin{multline*}
  \nabla_\param \delta_{\param,s}^{\chi,s-r}(\bfy) - \nabla_\param
  \delta_{\param,s}^{\chi',s-r'}(\bfy)  \\
  = \sum_{u=s-r}^{s-r^{\prime}-1} \left\{ \smoothfunc{\chi,s-r-1}{\param,u,s+1}(\Upsilon_{\param},\bfy) -
      \smoothfunc{\chi,s-r-1}{\param,u,s}(\Upsilon_{\param},\bfy) \right\} +
  \smoothfunc{\chi,s-r-1}{\param,s+1,s+1}(\Upsilon_{\param},\bfy) -
    \smoothfunc{\chi^{\prime},s-r^{\prime}-1}{\param,s+1,s+1}(\Upsilon_{\param},\bfy)\\+\sum_{u=s-r^{\prime}}^{s}\left\{\smoothfunc{\chi,s-r-1}{\param,u,s+1}(\Upsilon_{\param},\bfy) -
      \smoothfunc{\chi,s-r-1}{\param,u,s}(\Upsilon_{\param},\bfy)-
   \smoothfunc{\chi^{\prime},s-r^{\prime}-1}{\param,u,s+1}(\Upsilon_{\param},\bfy) +
     \smoothfunc{\chi^{\prime},s-r^{\prime}-1}{\param,u,s}(\Upsilon_{\param},\bfy)\right\}\eqsp.
\end{multline*}
Under H\ref{BOEMsupp:assum:strong} and S\ref{BOEMsupp:assum:regularity-grad}, 
Remark~\ref{BOEMsupp:rem:lgn} can be applied and for any $s-r\leq u \leq
s-r^{\prime}-1$,
    \begin{equation*}
    \left|\smoothfunc{\chi,s-r-1}{\param,u,s+1}(\Upsilon_{\param},\bfy) -
      \smoothfunc{\chi,s-r-1}{\param,u,s}(\Upsilon_{\param},\bfy)\right|\leq 2\rho^{s-u}\phi(\bfy_u),
    \end{equation*}
    where $\phi_u(\bfy)$ is defined in \eqref{BOEMsupp:eq:def-phiu}. Similarly, by
   Remark~\ref{BOEMsupp:rem:lgn}
   \begin{equation*}
\left| \smoothfunc{\chi,s-r-1}{\param,s+1,s+1}(\Upsilon_{\param},\bfy) -
    \smoothfunc{\chi^{\prime},s-r^{\prime}-1}{\param,s+1,s+1}(\Upsilon_{\param},\bfy)\right|\leq 2\rho^{r^{\prime}+1}\phi(\bfy_{s+1})\eqsp.
\end{equation*}
For any $s-r^{\prime}\leq u \leq s$, by Remark~\ref{BOEMsupp:rem:lgn},
\begin{multline*}
  \left|\smoothfunc{\chi,s-r-1}{\param,u,s+1}(\Upsilon_{\param},\bfy) -
      \smoothfunc{\chi^{\prime},s-r^{\prime}-1}{\param,u,s+1}(\Upsilon_{\param},\bfy)+ \smoothfunc{\chi^{\prime},s-r^{\prime}-1}{\param,u,s}(\Upsilon_{\param},\bfy)-\smoothfunc{\chi,s-r-1}{\param,u,s}(\Upsilon_{\param},\bfy)\right|\\
      \leq\left|\smoothfunc{\chi,s-r-1}{\param,u,s+1}(\Upsilon_{\param},\bfy) -
      \smoothfunc{\chi^{\prime},s-r^{\prime}-1}{\param,u,s+1}(\Upsilon_{\param},\bfy)\right|+\left|\smoothfunc{\chi^{\prime},s-r^{\prime}-1}{\param,u,s}(\Upsilon_{\param},\bfy)-\smoothfunc{\chi,s-r-1}{\param,u,s}(\Upsilon_{\param},\bfy)\right|\\
  \leq 4\rho^{u+r^{\prime}-s}\phi(\bfy_u)
\end{multline*}
and by Remark~\ref{BOEMsupp:rem:lgn},
\begin{multline*}
\left|\smoothfunc{\chi,s-r-1}{\param,u,s+1}(\Upsilon_{\param},\bfy) -
      \smoothfunc{\chi^{\prime},s-r^{\prime}-1}{\param,u,s+1}(\Upsilon_{\param},\bfy)+ \smoothfunc{\chi^{\prime},s-r^{\prime}-1}{\param,u,s}(\Upsilon_{\param},\bfy)-\smoothfunc{\chi,s-r-1}{\param,u,s}(\Upsilon_{\param},\bfy)\right|\\
      \leq\left|\smoothfunc{\chi,s-r-1}{\param,u,s+1}(\Upsilon_{\param},\bfy) -\smoothfunc{\chi,s-r-1}{\param,u,s}(\Upsilon_{\param},\bfy)\right|+\left| \smoothfunc{\chi^{\prime},s-r^{\prime}-1}{\param,u,s+1}(\Upsilon_{\param},\bfy)-\smoothfunc{\chi^{\prime},s-r^{\prime}-1}{\param,u,s}(\Upsilon_{\param},\bfy)\right|\\
\leq 4\rho^{s-u}\phi(\bfy_u)\eqsp.
\end{multline*}
Hence,
\begin{equation*}
\left| \nabla_\param \delta_{\param,s}^{\chi,s-r}(\bfy) - \nabla_\param
  \delta_{\param,s}^{\chi',s-r'}(\bfy)   \right| \leq 2\sum_{u=s-r}^{s-r^{\prime}-1}\rho^{s-u}\phi(\bfy_u) + 4\sum_{u=s-r^{\prime}}^{s+1}\left(\rho^{u+r^{\prime}-s}\wedge\rho^{s-u}\right)\phi(\bfy_u)\eqsp.
\end{equation*}

Furthermore,
\begin{align*}
   \sum_{u=s-r^{\prime}}^{s+1}\phi(\bfy_u)& \left(\rho^{u+r^{\prime}-s}\wedge
    \rho^{s-u}\right) \\
& \leq \sum_{ s-r' \leq u\leq \lfloor s-r^{\prime}/2
    \rfloor}\rho^{s-u} \phi(\bfy_u)
  +\sum_{u\geq \lfloor s-r^{\prime}/2\rfloor}\rho^{u+r^{\prime}-s}\phi(\bfy_u) \\
  &  \leq \rho^{r'/2} \sum_{u \in \Zset} \phi(\bfy_u) \rho^{|u|/4} \  \cdots \\
& \times \left(
    \sum_{ u\leq \lfloor s-r^{\prime}/2 \rfloor} \rho^{s-u -r'/2 -|u|/4} +
    \sum_{
      \lfloor s-r^{\prime}/2 \rfloor + 1 \leq u \leq s+1} \rho^{u+r'/2-s -|u|/4}  \right)      \\
  &  \leq 2 \frac{\rho^{(r'-1)/4}}{1-\rho} \ \sum_{u \in \Zset} \phi(\bfy_u)
  \rho^{|u|/4}\eqsp,
 \end{align*}
 where we used that $\sup_{s-r' \leq u \leq \lfloor s-r'/2 \rfloor } |u| \leq
 r'$ and $\sup_{ \lfloor s-r^{\prime}/2 \rfloor + 1 \leq u \leq s+1} |u| \leq
 r'+1$.  Moreover, upon noting that $-u/2+(s+1)/2 \leq s-u-r^{\prime}/2$ when
 $u \leq s-r^{\prime}-1$,
\begin{align*}
  \sum_{u=s-r}^{s-r^{\prime}-1}\phi(\bfy_u)\rho^{s-u}&\leq
  \rho^{r^{\prime}/2}\sum_{u=s-r}^{s-r^{\prime}-1}\phi(\bfy_u)\rho^{s-u-r^{\prime}/2}
   \\
& \leq  \rho^{r^{\prime}/2}\sum_{u=s-r}^{s-r^{\prime}-1}\phi(\bfy_u)\rho^{-u/2+(s+1)/2}\\
  &\leq  \rho^{r^{\prime}/2}\rho^{(s+1)/2}\sum_{u=s-r}^{s-r^{\prime}-1}\phi(\bfy_u)\rho^{|u|/2}\eqsp,
\end{align*}
where we used that $s-r'-1 \leq 0$ in the last inequality.

Hence,
\begin{equation}\label{eq:uniformdiffelta}
\sup_{\param \in \paramset} \left|  \nabla_\param \delta_{\param,s}^{\chi,s-r}(\bfy) - \nabla_\param
  \delta_{\param,s}^{\chi',s-r'}(\bfy)    \right| \leq \frac{16}{1-\rho} \rho^{(r'-1)/4} \ \sum_{u \in \Zset} \phi(\bfy_u)
  \rho^{|u|/4} \eqsp.
\end{equation}
\eqref{BOEMsupp:lem:score:LimiteDerivee:claim2} Let $\bfy \in \Yset^\Zset$ such that
$\xi(\bfy) < +\infty$. Then for any $u \in \Zset$, $\phi(\bfy_u) < +\infty$. By
Lemma~\ref{BOEMsupp:lem:score:differentiabilite} and Eq.~(\ref{BOEMsupp:eq:delta-loglikelihood}),
the functions $\{\param \mapsto \delta_{\param,0}^{\chi,-r}(\bfy)\}_{r \geq 0}$
are $C^1$ functions on $\paramset$. By \eqref{BOEMsupp:BOEMsupp:lem:score:LimiteDerivee:claim1},
there exists a function $\param \mapsto \tilde \delta_\param(\bfy)$ such that 
\[
\lim_{r \to +\infty} \sup_{\param \in \paramset}
\left|\nabla_\param\delta_{\param,0}^{\chi,-r}(\bfy) - \tilde
  \delta_\param(\bfy)\right| =0 \eqsp.
\]
Furthermore, by Lemma~\ref{BOEMsupp:lem:forget-log-lik}, 
\[
\lim_{r \to +\infty} \sup_{\param \in \paramset}
\left|\delta_{\param,0}^{\chi,-r}(\bfy) -
  \delta_\param(\bfy)\right| =0 \eqsp.
\]
Then, $\param \mapsto \delta_\param(\bfy)$ is $C^1$ on $\paramset$ and for any
$\param \in \paramset$, $\tilde \delta_\param(\bfy) = \nabla_\param
\delta_\param(\bfy)$.

We thus proved that for any $\bfy \in \Yset^\Zset$ such that $\xi(\bfy)<
+\infty$ and for any initial distribution $\chi$,
\begin{equation}
  \label{BOEMsupp:eq:lem:score:Limit:claim2:tool1}
  \lim_{r \to +\infty} \sup_{\param \in \paramset}
\left|\nabla_\param\delta_{\param,0}^{\chi,-r}(\bfy) - \nabla_\param
  \delta_\param(\bfy)\right| =0 \eqsp.
\end{equation}
Observe that by definition,
$\nabla_\param\delta_{\param,s}^{\chi,s-r}(\bfy) =
\nabla_\param\delta_{\param,0}^{\chi,-r}(\shift^s\circ\bfy)$. This property,
combined with
Lemma~\ref{BOEMsupp:lem:score:LimiteDerivee}\eqref{BOEMsupp:BOEMsupp:lem:score:LimiteDerivee:claim1},
yields
\begin{multline*}
  \sup_{\param \in \paramset} \left|
    \nabla_\param\delta_{\param,s}^{\chi,s-r}(\bfy)
    - \nabla_\param\delta_\param(\shift^s\circ\bfy) \right|  \\
  \leq \frac{16 \rho^{-1/4}}{1-\rho} \rho^{r/4} \ \xi( \bfy) + \sup_{\param \in
    \paramset}
  \left|\nabla_\param\delta_{\param,0}^{\chi,-r'}(\shift^s\circ\bfy) -
    \nabla_\param\delta_\param(\shift^s\circ\bfy) \right| \eqsp.
\end{multline*}
Since $\xi(\shift^s\circ\bfy)< +\infty$, when $r' \to +\infty$, the second term tends to zero by
(\ref{BOEMsupp:eq:lem:score:Limit:claim2:tool1}) - for fixed $\bfy, s$ and $\chi$ -.
This concludes the proof.
\end{proof}

\begin{lemma}\label{BOEMsupp:lem:bound-score}
  \begin{enumerate}[(i)]
  \item \label{BOEMsupp:lem:bound-score:claim1} Assume S\ref{BOEMsupp:assum:regularity-grad}. For
    any $\bfy \in \Yset^\Zset$ such that $\phi(\bfy_u) < +\infty$ for any $u \in
    \Zset$, for any integers $r,s\geq 0$,
\[
\sup_{\param \in \paramset}
\left|\nabla_\param\delta_{\param,s}^{\chi,s-r}(\bfy) \right| \leq 2\sum_{u=s-r}^{s+1}\phi(\bfy_{u})
\eqsp.
\]
\item \label{BOEMsupp:lem:bound-score:claim2} Assume H\ref{BOEMsupp:assum:strong} and
  S\ref{BOEMsupp:assum:regularity-grad}. Then, for any $y\in\Yset^\Zset$ such that
  $\xi(\bfy)<+\infty$ and for any $r\geq 0$,
\[
\sup_{\param \in \paramset} |\nabla_\param\delta_{\param}(\bfy)|\leq
2\sum_{u=-r}^{1}\phi(\bfy_{u}) + \frac{16 \rho^{-1/4}}{1-\rho} \xi(\bfy) \rho^{r/4} \eqsp,\]
  \end{enumerate}
  where $\xi(\bfy)$ is defined in Lemma~\ref{BOEMsupp:lem:score:LimiteDerivee}.
\end{lemma}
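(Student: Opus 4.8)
The plan is to bound the finite-horizon normalized score increment directly via the gradient formula of Lemma~\ref{BOEMsupp:lem:score:differentiabilite}, and then pass to the limit using the forgetting estimate of Lemma~\ref{BOEMsupp:lem:score:LimiteDerivee}. For claim~\eqref{BOEMsupp:lem:bound-score:claim1}, recall from \eqref{BOEMsupp:eq:delta-loglikelihood} that $\delta_{\param,s}^{\chi,s-r}(\bfy) = \ell_{\param,s+1}^{\chi,s-r}(\bfy) - \ell_{\param,s}^{\chi,s-r}(\bfy)$, so by Lemma~\ref{BOEMsupp:lem:score:differentiabilite},
\[
\nabla_\param\delta_{\param,s}^{\chi,s-r}(\bfy) = \sum_{u=s-r}^{s+1}\smoothfunc{\chi,s-r-1}{\param,u,s+1}(\Upsilon_\param,\bfy) - \sum_{u=s-r}^{s}\smoothfunc{\chi,s-r-1}{\param,u,s}(\Upsilon_\param,\bfy)\eqsp.
\]
Each term $\smoothfunc{\chi,s-r-1}{\param,u,\cdot}(\Upsilon_\param,\bfy)$ is a conditional expectation of $\Upsilon_\param(x_{u-1},x_u,y_u)$ under a probability measure, hence is bounded in absolute value by $\sup_{x,x'}|\Upsilon_\param(x,x',y_u)| \le \phi(\bfy_u)$ by the definition \eqref{BOEMsupp:eq:def-phiu}. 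Summing the $r+2$ terms of the first sum and the $r+1$ terms of the second and taking the supremum over $\param$ gives the bound $2\sum_{u=s-r}^{s+1}\phi(\bfy_u)$ (one absorbs the $u=s+1$ term of the first sum into the common majorant; since $\phi\ge 0$ this only enlarges the right-hand side).

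For claim~\eqref{BOEMsupp:lem:bound-score:claim2}, the idea is simply to compare $\nabla_\param\delta_\param(\bfy)$ with a finite-horizon approximant. Fix $r\ge 0$ and write, using the triangle inequality,
\[
\sup_\param|\nabla_\param\delta_\param(\bfy)| \le \sup_\param\bigl|\nabla_\param\delta_\param(\bfy) - \nabla_\param\delta_{\param,0}^{\chi,-r}(\bfy)\bigr| + \sup_\param\bigl|\nabla_\param\delta_{\param,0}^{\chi,-r}(\bfy)\bigr|\eqsp.
\]
By Lemma~\ref{BOEMsupp:lem:score:LimiteDerivee}\eqref{BOEMsupp:lem:score:LimiteDerivee:claim2} (applied with $s=0$, which is legitimate since $\xi(\bfy)<+\infty$ forces $\phi(\bfy_u)<+\infty$ for all $u$ and hence the differentiability of $\param\mapsto\delta_\param(\bfy)$), the first term is at most $\frac{16\rho^{-1/4}}{1-\rho}\rho^{r/4}\xi(\bfy)$. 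By claim~\eqref{BOEMsupp:lem:bound-score:claim1} applied with $s=0$, the second term is at most $2\sum_{u=-r}^{1}\phi(\bfy_u)$. Adding the two estimates yields the claimed bound.

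I do not anticipate any genuine obstacle here: both parts are essentially bookkeeping built on the two preceding lemmas. The one point requiring a little care is the index arithmetic in claim~\eqref{BOEMsupp:lem:bound-score:claim1} — making sure the range of $u$ in the gradient formula for $\nabla_\param\delta_{\param,s}^{\chi,s-r}$ is $\{s-r,\dots,s+1\}$ and that the "extra" summand at $u=s+1$ is handled so that the stated majorant $2\sum_{u=s-r}^{s+1}\phi(\bfy_u)$ is indeed an upper bound (which works precisely because $\phi\ge 0$, so one is free to count every index twice). A second minor point is to invoke Lemma~\ref{BOEMsupp:lem:score:LimiteDerivee}\eqref{BOEMsupp:lem:score:LimiteDerivee:claim2} only after noting $\xi(\bfy)<+\infty$ so that $\nabla_\param\delta_\param(\bfy)$ is well defined in the first place.
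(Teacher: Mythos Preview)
Your proposal is correct and follows essentially the same route as the paper: for \eqref{BOEMsupp:lem:bound-score:claim1} you expand $\nabla_\param\delta_{\param,s}^{\chi,s-r}$ via Lemma~\ref{BOEMsupp:lem:score:differentiabilite} as a difference of two sums of smoothed expectations of $\Upsilon_\param$ and bound each summand by $\phi(\bfy_u)$, exactly as the paper does; for \eqref{BOEMsupp:lem:bound-score:claim2} you combine the triangle inequality with Lemma~\ref{BOEMsupp:lem:score:LimiteDerivee}\eqref{BOEMsupp:lem:score:LimiteDerivee:claim2} (at $s=0$) and part \eqref{BOEMsupp:lem:bound-score:claim1}, which is precisely what the paper's one-line proof invokes. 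Your remarks on the index range and on $\xi(\bfy)<+\infty$ ensuring well-definedness are the right points of care.
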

\begin{proof}
  \eqref{BOEMsupp:lem:bound-score:claim1} By (\ref{BOEMsupp:eq:delta-loglikelihood}) and
  Lemma~\ref{BOEMsupp:lem:score:differentiabilite},
\begin{multline*}
  \left| \nabla_\param\delta^{\chi,s-r}_{\param,s}(\bfy) \right| =   \left| \nabla_{\param}\ell^{\chi,s-r}_{\param,s+1}(\bfy) -  \nabla_{\param}\ell^{\chi,s-r}_{\param,s}(\bfy) \right| \\
  \leq 2 \sum_{u=s-r}^{s+1} \left| \frac{\int \chi(\rmd
      x_{s-r})L_{\param,s-r:u-1}(x_{s-r},\rmd
      x_{u})\nabla_\param \log \left[m_{\param}(x_{u-1},x_{u})g_\param(x_{u},\bfy_{u})\right]
      L_{\param,u:s-1}(x_{u},\Xset)}{\int \chi(\rmd
      x_{s-r})L_{\param,s-r:s-1}(x_{s-r},\Xset)} \right| \eqsp.
\end{multline*}
The proof is concluded upon noting that for any $s-r\leq u \leq s+1$,
\[\left|
  \frac{\int \chi(\rmd
    x_{s-r})g_{\param}(x_{s-r},y_{s-r})L_{\param,s-r:u-1}(x_{s-r},\rmd
    x_{u})\nabla_\param \log g_\param(x_{u},\bfy_{u})
    L_{\param,u:s-1}(x_{u},\Xset)}{\int \chi(\rmd
    x_{s-r})g_{\param}(x_{s-r},\bfy_{s-r})L_{\param,s-r:s-1}(x_{s-r},\Xset)}
\right|\] is upper bounded by $\phi(\bfy_{u})$.

\eqref{BOEMsupp:lem:bound-score:claim2} is a consequence of
Lemma~\ref{BOEMsupp:lem:score:LimiteDerivee}\eqref{BOEMsupp:lem:score:LimiteDerivee:claim2} and
Lemma~\ref{BOEMsupp:lem:bound-score}\eqref{BOEMsupp:lem:bound-score:claim1}.
\end{proof}

\begin{theorem}\label{BOEMsupp:th:score}
  Assume H\ref{BOEMsupp:assum:strong}, H\ref{BOEMsupp:assum:obs}(\ref{BOEMsupp:assum:obs:mix}) and S\ref{BOEMsupp:assum:regularity-grad}.
 \begin{enumerate}[(i)]
 \item \label{BOEMsupp:th:score:C1} For any $ T \geq 0$ and any distribution $\chi$ on $\Xset$, the
   functions $\param \mapsto \ell^{\chi,0}_{\param,T}(\bfY)$ and $\param
   \mapsto \ell(\param)$ are continuously differentiable
   $\ps{\PPim}$
     \item \label{BOEMsupp:th:score:convergence}For any initial
distribution $\chi$ on $(\Xset,\sigmaX)$, 
\begin{equation}\label{eq:score}
\frac{1}{T}\nabla_{\param}  \ell^{\chi,0}_{\param,T}(\bfY) \underset{T\rightarrow +\infty}{\longrightarrow}\nabla_{\param}\ell(\param)\quad\ps{\PPim}
\end{equation}
\end{enumerate}
\end{theorem}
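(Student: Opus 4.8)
The plan is to follow the template of the proof of Theorem~\ref{BOEMsupp:th:likelihood}, with the increments $\delta_{\param,s}^{\chi,0}$ replaced by their gradients, and with Lemmas~\ref{BOEMsupp:lem:score:differentiabilite}, \ref{BOEMsupp:lem:score:LimiteDerivee} and \ref{BOEMsupp:lem:bound-score} playing the roles that Lemmas~\ref{BOEMsupp:lem:forget-log-lik} and \ref{BOEMsupp:lem:bound-log-lik} played there. I would first fix a full-probability event on which two facts hold: since $\bfY$ is stationary and $\CE[]{}{}{\phi(\bfY_0)}<+\infty$ by S\ref{BOEMsupp:assum:regularity-grad}\eqref{BOEMsupp:assum:regularity:phi}, one has $\phi(\bfY_u)<+\infty$ for every $u\in\Zset$ (a countable union of $\PPim$-null sets), and, by Tonelli's theorem, $\CE[]{}{}{\xi(\bfY)}=\CE[]{}{}{\phi(\bfY_0)}\sum_{u\in\Zset}\rho^{|u|/4}<+\infty$, so that $\xi(\bfY)<+\infty$, where $\xi$ is the function of Lemma~\ref{BOEMsupp:lem:score:LimiteDerivee}. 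All subsequent steps are carried out on this event.

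For part~(i), I would use the decomposition $\ell_{\param,T}^{\chi,0}(\bfY)=\sum_{s=0}^{T-1}\delta_{\param,s}^{\chi,0}(\bfY)$ of \eqref{BOEMsupp:eq:loglikelihood-delta}: each summand is $C^1$ in $\param$ by Lemma~\ref{BOEMsupp:lem:score:differentiabilite} (applied with $r=s$, which is legitimate since $\phi(\bfY_u)<+\infty$ for all $u$), hence so is the finite sum. For $\ell(\param)=\CE[]{}{}{\delta_\param(\bfY)}$, Lemma~\ref{BOEMsupp:lem:score:LimiteDerivee}\eqref{BOEMsupp:lem:score:LimiteDerivee:claim2} gives that $\param\mapsto\delta_\param(\bfY)$ is $C^1$, while Lemma~\ref{BOEMsupp:lem:bound-score}\eqref{BOEMsupp:lem:bound-score:claim2} with $r=0$ gives $\sup_{\param\in\paramset}\left|\nabla_\param\delta_\param(\bfY)\right|\leq 2\{\phi(\bfY_0)+\phi(\bfY_1)\}+\frac{16\rho^{-1/4}}{1-\rho}\xi(\bfY)$, a $\PPim$-integrable bound that does not depend on $\param$. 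The standard theorem on differentiation under the integral sign then yields that $\param\mapsto\ell(\param)$ is $C^1$ with $\nabla_\param\ell(\param)=\CE[]{}{}{\nabla_\param\delta_\param(\bfY)}$; continuity of this expectation follows from the same domination together with the pointwise continuity of $\param\mapsto\nabla_\param\delta_\param(\bfY)$.

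For part~(ii), I would fix $\param\in\paramset$ and, using \eqref{BOEMsupp:eq:loglikelihood-delta}, split
\[
\frac{1}{T}\nabla_\param\ell_{\param,T}^{\chi,0}(\bfY)=\frac{1}{T}\sum_{s=0}^{T-1}\left(\nabla_\param\delta_{\param,s}^{\chi,0}(\bfY)-\nabla_\param\delta_\param(\shift^s\circ\bfY)\right)+\frac{1}{T}\sum_{s=0}^{T-1}\nabla_\param\delta_\param(\shift^s\circ\bfY)\eqsp.
\]
For the first sum, Lemma~\ref{BOEMsupp:lem:score:LimiteDerivee}\eqref{BOEMsupp:lem:score:LimiteDerivee:claim2} applied with $r=s$ gives $\left|\nabla_\param\delta_{\param,s}^{\chi,0}(\bfY)-\nabla_\param\delta_\param(\shift^s\circ\bfY)\right|\leq\frac{16\rho^{-1/4}}{1-\rho}\rho^{s/4}\xi(\bfY)$; since $\rho<1$ and $\xi(\bfY)<+\infty$ on our event, $\sum_{s\geq0}\rho^{s/4}\xi(\bfY)<+\infty$, so the corresponding Ces\`aro average tends to $0$, $\ps{\PPim}$ For the second sum, $\{\shift^s\circ\bfY\}_{s\geq0}$ is stationary and, by the $\beta$-mixing assumption H\ref{BOEMsupp:assum:obs}\eqref{BOEMsupp:assum:obs:mix}, ergodic, and $\CE[]{}{}{\left|\nabla_\param\delta_\param(\bfY)\right|}<+\infty$ by the bound just used; hence the ergodic theorem (see \cite[Theorem~24.1, p.~314]{billingsley:1987}) gives $T^{-1}\sum_{s=0}^{T-1}\nabla_\param\delta_\param(\shift^s\circ\bfY)\to\CE[]{}{}{\nabla_\param\delta_\param(\bfY)}=\nabla_\param\ell(\param)$, $\ps{\PPim}$ Combining the two limits yields \eqref{eq:score}.

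The only genuinely delicate point is the interchange of gradient and expectation in part~(i) --- the identity $\nabla_\param\ell(\param)=\CE[]{}{}{\nabla_\param\delta_\param(\bfY)}$ and its continuity --- which hinges entirely on the uniform-in-$\param$ integrable bound of Lemma~\ref{BOEMsupp:lem:bound-score}\eqref{BOEMsupp:lem:bound-score:claim2}; this is precisely why the geometric weight $\rho^{|u|/4}$ is built into the definition of $\xi$ and why the moment condition $\CE[]{}{}{\phi(\bfY_0)}<+\infty$ is imposed in S\ref{BOEMsupp:assum:regularity-grad}. One should also note that, just as in the second assertion of Theorem~\ref{BOEMsupp:th:likelihood}, the convergence \eqref{eq:score} is claimed only for each fixed $\param$; promoting it to a locally uniform statement would require the additional equicontinuity argument used there to pass from \eqref{eq:log-lik} to \eqref{BOEMsupp:eq:log-lik:unif}.
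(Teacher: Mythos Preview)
Your proposal is correct and follows essentially the same route as the paper's proof: the decomposition \eqref{BOEMsupp:eq:loglikelihood-delta} is differentiated termwise via Lemma~\ref{BOEMsupp:lem:score:differentiabilite}, the resulting average is split into a forgetting term controlled by Lemma~\ref{BOEMsupp:lem:score:LimiteDerivee}\eqref{BOEMsupp:lem:score:LimiteDerivee:claim2} and an ergodic average handled by the ergodic theorem together with the integrability bound of Lemma~\ref{BOEMsupp:lem:bound-score}, and the interchange $\nabla_\param\ell(\param)=\CE[]{}{}{\nabla_\param\delta_\param(\bfY)}$ is justified by dominated convergence using Lemma~\ref{BOEMsupp:lem:bound-score}\eqref{BOEMsupp:lem:bound-score:claim2} with $r=0$. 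The only cosmetic difference is that you fix the full-probability event $\{\xi(\bfY)<+\infty\}$ at the outset, whereas the paper invokes it at the point where it is needed.
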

\begin{proof}
  By (\ref{BOEMsupp:eq:loglikelihood-delta}) and
  Lemma~\ref{BOEMsupp:lem:score:differentiabilite}, for any $\bfy$ such that $\phi(\bfy_u)
  < +\infty$ for any $u \in \Zset$, $\ell_{\param,T}^{\chi,0}(\bfy)$ and
  $\delta_{\param,s}^{\chi,0}(\bfy)$ are continuously differentiable and
  \eqref{BOEMsupp:eq:loglikelihood-delta} implies
\begin{equation*}
  \nabla_\param \ell_{\param,T}^{\chi,0}(\bfy) = \sum_{s=0}^{T-1} \nabla_\param\delta_{\param,s}^{\chi,0}(\bfy)\eqsp.
\end{equation*}
This decomposition leads to
\begin{equation}\label{BOEMsupp:eq:decomp-grad-log}
\frac{1}{T} \nabla_\param\ell_{\param,T}^{\chi,0}(\bfY) = \frac{1}{T} \sum_{s=0}^{T-1}
\left(\nabla_\param\delta_{\param,s}^{\chi,0}(\bfY) -\nabla_\param\delta_{\param}(\shift^s\circ\bfY)\right) + \frac{1}{T}\sum_{s=0}^{T-1}\nabla_\param\delta_{\param}(\shift^s\circ\bfY)\eqsp.
\end{equation}

Consider the first term of the rhs of \eqref{BOEMsupp:eq:decomp-grad-log}. Since $\bfY$
is a stationary process, assumption S\ref{BOEMsupp:assum:regularity-grad}\eqref{BOEMsupp:assum:regularity:phi} implies that
$\Expparam{\xi(\bfY)}{}<+\infty$, where $\xi$ is defined by
\eqref{BOEMsupp:eq:def-xi}. Then, $\xi(\bfY)<+\infty\quad \ps{\PPim}$ and by
Lemma~\ref{BOEMsupp:lem:score:LimiteDerivee}\eqref{BOEMsupp:lem:score:LimiteDerivee:claim2}, for
any $0\leq s\leq T-1$,
 \[\left|\nabla_\param\delta_{\param,s}^{\chi,0}(\bfY) -\nabla_\param\delta_{\param}(\shift^s\circ\bfY)\right|\leq \xi(\bfY) \frac{16 \rho^{-1/4}}{1-\rho} \rho^{s/4}\eqsp.\] 
 Therefore
 \[\frac{1}{T}\sum_{s=0}^{T-1}\left|\nabla_\param\delta_{\param,s}^{\chi,0}(\bfY) -\nabla_\param\delta_{\param}(\shift^s\circ\bfY)\right|\leq \frac{1}{T}\xi(\bfY)  \frac{16 \rho^{-1/4}}{1-\rho} \frac{1}{1-\rho^{1/4}}   \eqsp,\]
 and
 \[
 \underset{T\to\infty}{\lim}\;\frac{1}{T}\sum_{s=0}^{T-1}\left(\nabla_\param\delta_{\param,s}^{\chi,0}(\bfY) -\nabla_\param\delta_{\param}(\shift^s\circ\bfY)\right)=0\eqsp,\quad \ps{\PPim}
 \]
Finally, consider the second term of the rhs of \eqref{BOEMsupp:eq:decomp-grad-log}. By Lemma~\ref{BOEMsupp:lem:bound-score} (applied with $r=1$), $\Expparam{\left|\nabla_\param\delta_{\param}(\bfY)\right|}{}<+\infty$. Under  H\ref{BOEMsupp:assum:obs}, the ergodic theorem (see \cite[Theorem 24.1, p.314]{billingsley:1987}) states that 
\[
\underset{T\to\infty}{\lim}\;\frac{1}{T}\sum_{s=0}^{T-1}\nabla_\param\delta_{\param}(\shift^s\circ\bfY)= \Expparam{\nabla_\param\delta_{\param}(\bfY)}{}\eqsp,\quad \ps{\PPim}
\]
Then, by \eqref{BOEMsupp:eq:decomp-grad-log} and the above discussion,
\[
\underset{T\to\infty}{\lim}\;\frac{1}{T}\nabla_\param\ell_{\param,T}^{\chi,0}(\bfY) = \Expparam{\nabla_\param\delta_{\param}(\bfY)}{}\eqsp,\quad\ps{\PPim}
\]
By Lemma~\ref{BOEMsupp:lem:bound-score}, applied with $r=0$,
\[\underset{\param\in\paramset}{\sup}\left|\nabla_\param\delta_{\param}(\bfY)\right|\leq  2\left[\phi(Y_{0})+\phi(Y_{1})\right] + \xi(\bfY)\rho^{1/2}\eqsp,\]
and the rhs is integrable under the stated assumptions. Therefore, by the
dominated convergence theorem,
$\Expparam{\nabla_\param\delta_{\param}(\bfY)}{} =
\nabla_\param\Expparam{\delta_{\param}(\bfY)}{} = \nabla_\param
\ell(\param)\eqsp.$ This concludes the proof.
\end{proof}

\newpage

\section{Additional experiments}\label{BOEMsupp:sec:appli}
In this section, we provide additional plots for the applications studied in \cite[Section~$3$]{lecorff:fort:2011}.
\subsection{Linear Gaussian model}
\label{BOEMsupp:sec:finiteLGM}
Figure~\ref{BOEMsupp:fig:init} illustrates the fact that the convergence properties of the BOEM do not depend on the initial distribution $\chi$ used in each block. Data are sampled using $\phi = 0.97$, $\sigma_{u}^{2} = 0.6$ and
$\sigma_{v}^{2} = 1$. All runs are started with $\phi = 0.1$, $\sigma_{u}^{2} =
1$ and $\sigma_{v}^{2} = 2$. Figure~\ref{BOEMsupp:fig:init} displays the estimation of $\phi$ by the
averaged BOEM algorithm with $\tau_{n} \sim n$ and $\tau_{n} \sim n^{1.5}$, over $100$
independent Monte Carlo runs as a function of the number of blocks.  We
consider first the case when $\chi$ is the stationary distribution of the
hidden process i.e. $\chi \equiv \mathcal{N}(0, (1-\phi^2)^{-1} \sigma_u^2)$, and the case when $\chi$ is the filtering distribution obtained at the
end of the previous block, computed with the Kalman filter. The estimation error is similar for both initialization schemes, even when $\phi$ is close to $1$ and for any choice of $\{\tau_{n}\}_{n\geq 1}$.
\begin{figure}[!h]
   \centering
   \subfloat[$\tau_{n}\sim n$]{\includegraphics[width=0.8\textwidth]{./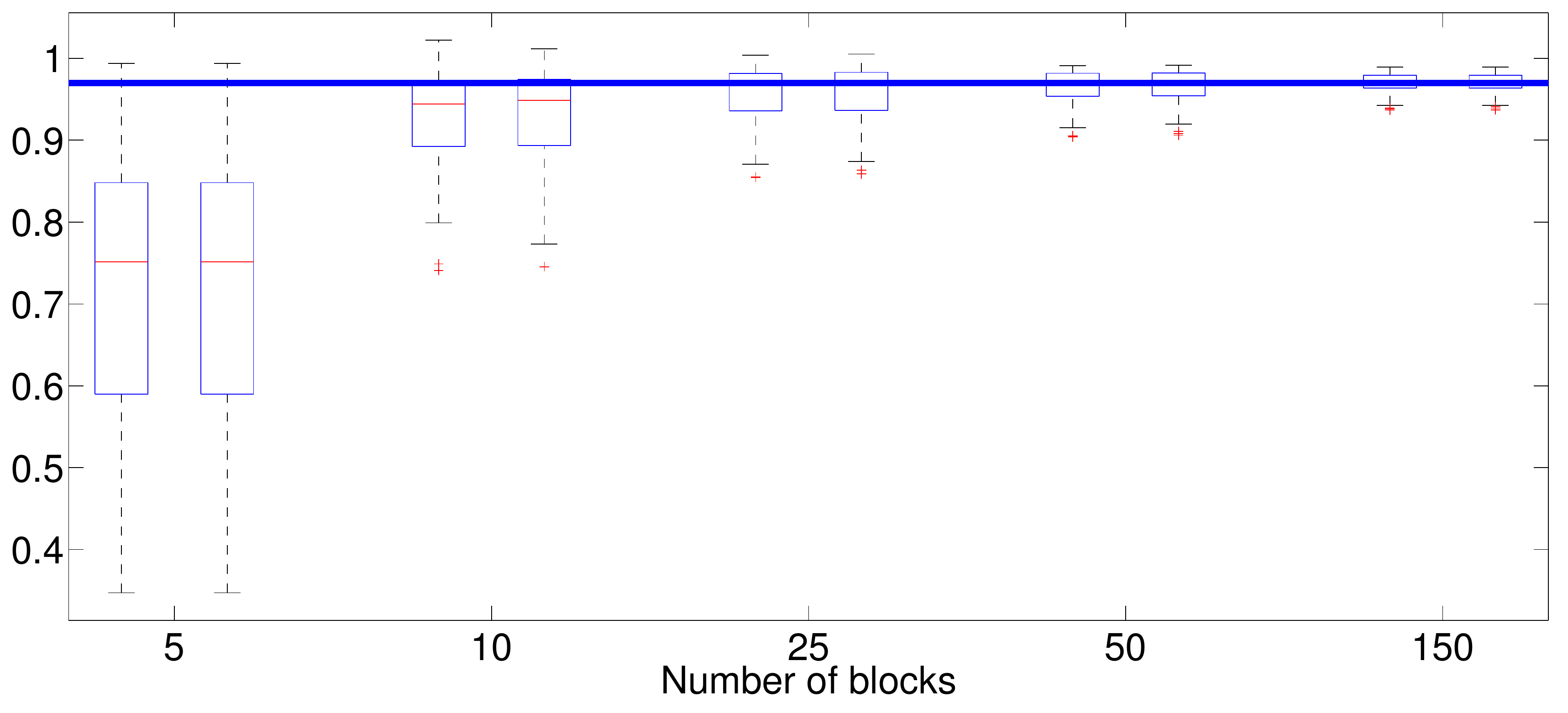}}\\
   \subfloat[$\tau_{n}\sim n^{1.5}$]{\includegraphics[width=0.8\textwidth]{./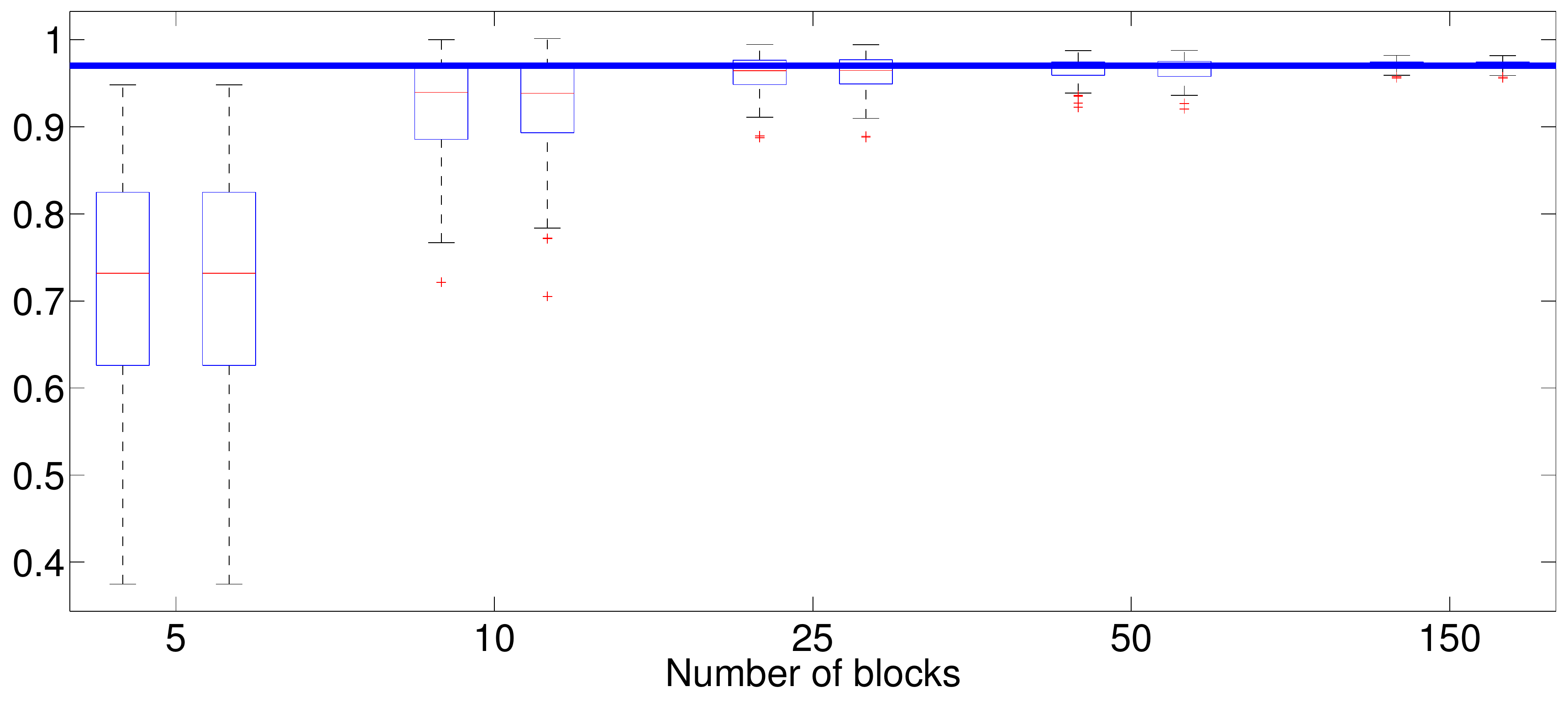}}
   \caption{Estimation of  $\phi$  after $5, 10, 25, 50$ and $150$ blocks, with two different initialization schemes: the stationary distribution (left) and the filtering distribution at the end of the previous block (right). The boxplots are computed with $100$ Monte Carlo runs.}
   \label{BOEMsupp:fig:init}
 \end{figure}

The theoretical analysis of BOEM says that a sufficient condition for
  convergence is the increasing size of the blocks.  On Figure~\ref{BOEMsupp:fig:blocksizes}, we compare different strategies for the definition of
  $\tau_n \eqdef T_{n} - T_{n-1}$. A slowly increasing sequence $\{\tau_{n}\}_{n\ge 0}$ is compared to different strategies using the same number of observations within each block. We consider the Linear Gaussian model:
\begin{equation*}
  X_{t+1} = \phi X_t + \sigma_uU_t\eqsp, \qquad \qquad Y_t = X_t + \sigma_vV_t\eqsp,
\end{equation*}
where $X_0\sim\mathcal{N}\left(0,\sigma_u^2 (1-\phi^2)^{-1}\right)$,
$\{U_t\}_{t\geq 0},\{V_t\}_{t\geq 0}$ are i.i.d. standard Gaussian r.v., independent from
$X_0$. Data are sampled using $\phi = 0.9$, $\sigma_{u}^{2} = 0.6$ and
$\sigma_{v}^{2} = 1$. All runs are started with $\phi = 0.1$, $\sigma_{u}^{2} =
1$ and $\sigma_{v}^{2} = 2$. Figure~\ref{BOEMsupp:fig:blocksizes} shows the estimation of $\phi$ over $100$ independent Monte Carlo runs (same conclusions could be drawn for $\sigma_{u}^{2}$ and $\sigma_{v}^{2}$). For each choice of $\{\tau_{n}\}_{n\ge 0}$, the median and first and last quartiles of the estimation are represented as a function of the number of observations.
    
  We observe that BOEM does not converge when the block size sequence is
  constant and small: as shown in Figure~\ref{BOEMsupp:fig:blocksizes}, if the number of
  observations is too small ($\tau_{n}=25$), the algorithm is a poor
  approximation of the {\em limiting EM} recursion and does not converge. With
  greater block sizes ($\tau_{n}=100$ or $\tau_{n}=350$), the algorithm
  converges but the convergence is slower because it is initialized far from
  the true value and many observations are needed to get several estimations.
  BOEM with slowly increasing block sizes has a better behavior since many
  estimations are produced at the beginning and, once the estimates are closer
  to the true value, the bigger block sizes reduce the variance of the
  estimation.

  Moreover, our convergence rates are given up to a multiplicative constant~:
  the theory says that $\sum_n \tau_n^{-\gamma/2} < \infty$ where $\gamma$ is
  related to the ergodic behavior of the HMM (see
  assumptions H\ref{BOEMsupp:assum:size-block}).
  
  Even if the sequence is chosen to increase at a polynomial rate, we can have
  $\tau_{n}\sim c \ n^{\alpha}$ ($\alpha>1$) with a constant $c$ such that the
  first blocks are quite small to allow a sufficiently large number of updates
  of the parameters $\{\theta_n, n\geq 1 \}$. During a (deterministic)
  "burn-in" period, the first blocks can even be of a fixed length before
  beginning the ``increasing'' procedure.
  
\begin{figure}[!h]
   \centering
   \subfloat[$\tau_{n}= n^{1.1}$ (red) and $\tau_{n}=25$ (blue).]{\includegraphics[width=0.5\textwidth]{./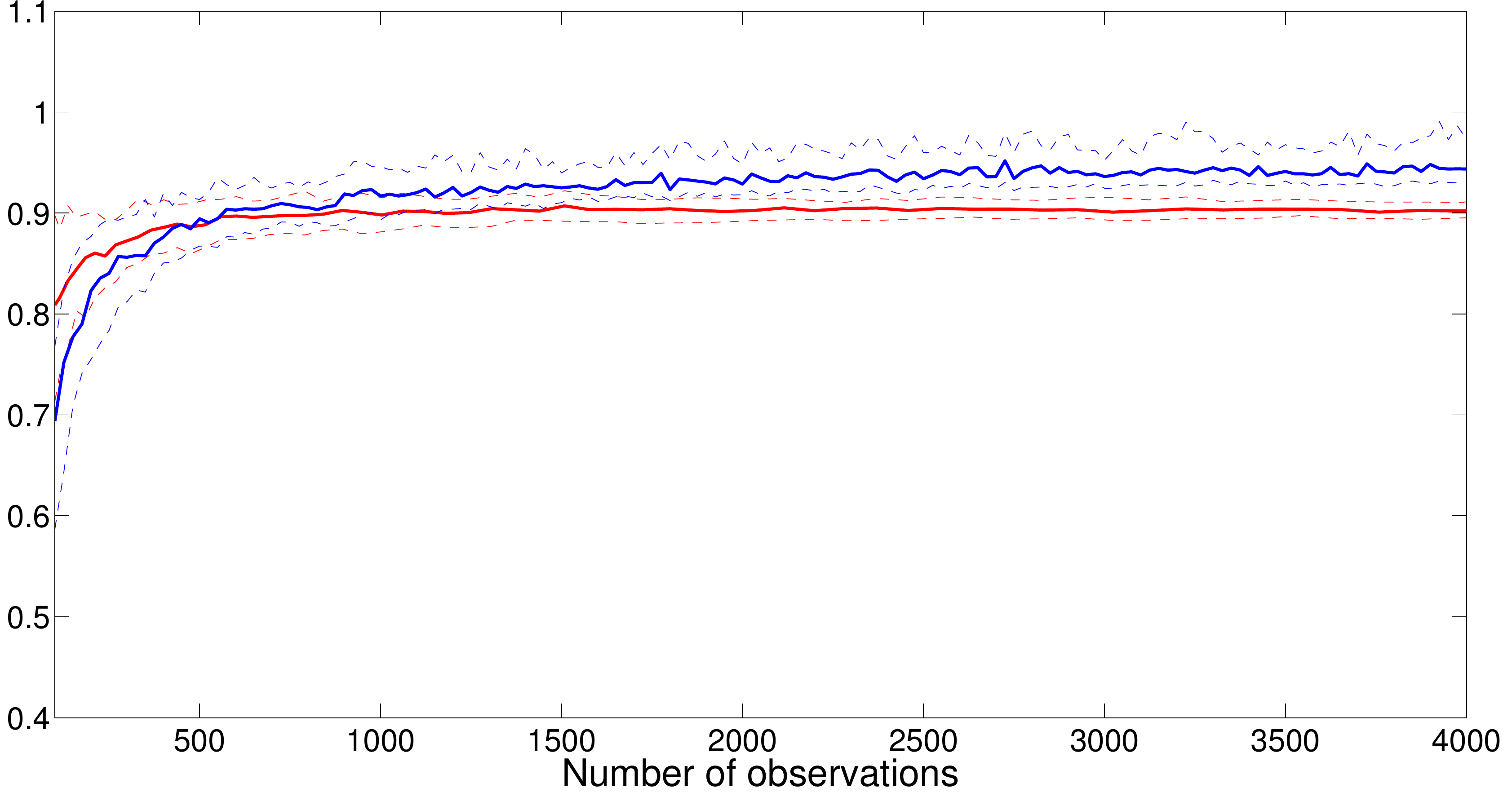}}
   \subfloat[$\tau_{n}= n^{1.1}$ (red) and $\tau_{n}=100$ (blue).]{\includegraphics[width=0.5\textwidth]{./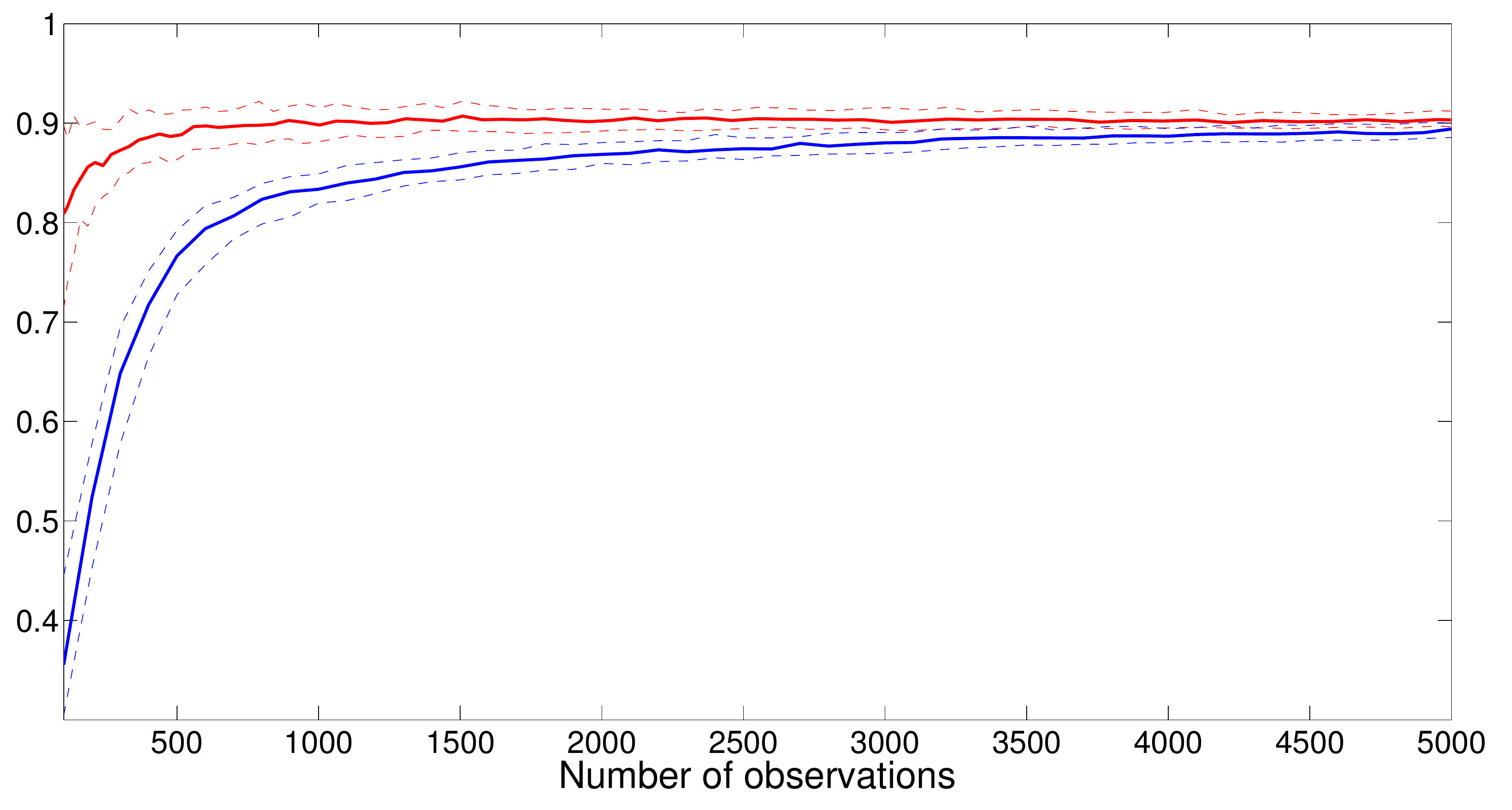}}\\
   \subfloat[$\tau_{n}= n^{1.1}$ (red) and $\tau_{n}=350$ (blue).]{\includegraphics[width=0.6\textwidth]{./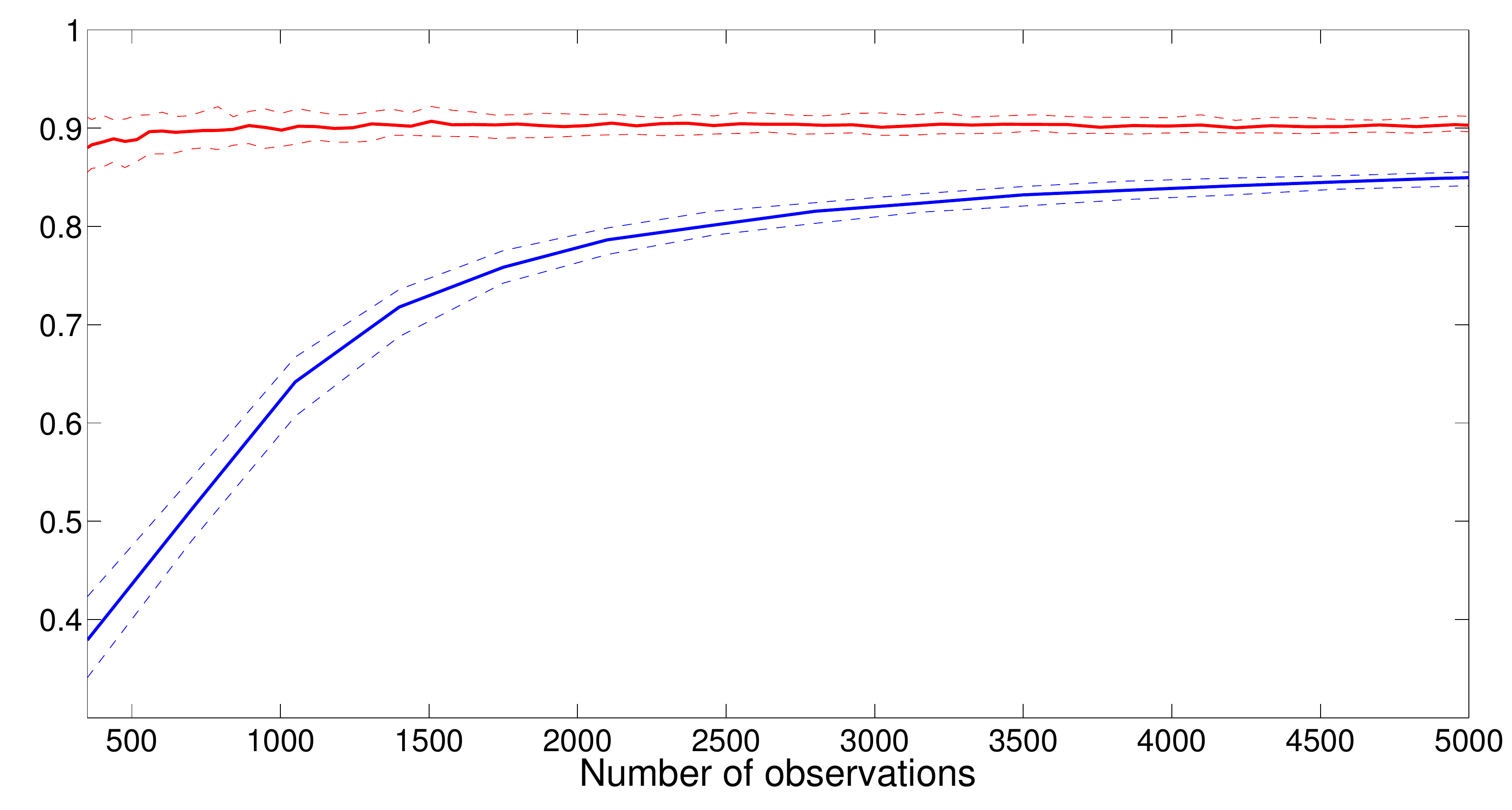}}
   \caption{Estimation of  $\phi$  with different block size  schemes: the median (bold line) and the first and last quartiles (dotted line) are shown for $\tau_{n}= n^{1.1}$ (red), $\tau_{n}=100$ (black) and $\tau_{n} = 350$ (purple). The quantities are computed with $100$ Monte Carlo runs.}
   \label{BOEMsupp:fig:blocksizes}
 \end{figure}
 
\subsection{Finite state-space HMM}
\label{BOEMsupp:sec:finiteHMM}
Observations are
sampled using $d=6$, $v = 0.5$, $x_{i} = i\eqsp, \forall i\in\{1,\dots,d\}$ and the true transition matrix is given by
\[
m =  \begin{pmatrix}0.5 & 0.05 & 0.1 & 0.15 & 0.15 & 0.05\\ 
    0.2& 0.35  &0.1 &0.15 &0.05& 0.15\\
          0.1& 0.1  &0.6 & 0.05 &0.05 &0.1\\
          0.02 &0.03& 0.1 &0.7 &0.1& 0.05\\
          0.1 &0.05& 0.13 &0.02 &0.6 &0.1\\
          0.1& 0.1& 0.13 &0.12& 0.1& 0.45\end{pmatrix}\eqsp.
\]
 \subsubsection{Comparison to an online EM based procedure}
 In this case, we want to estimate the states $\{x_{1}, \dots, x_{d}\}$. All the runs are started from $v = 2$ and from the initial states $\{-1;0;.5;2;3;4\}$.  The experiment is the same as the one in \cite[Section~$3.2$]{lecorff:fort:2011}. The averaged BOEM is compared to an online EM procedure (see \cite{cappe:2011}) combined with Polyak-Ruppert averaging (see \cite{polyak:1990}). This online EM based algorithm follows a stochastic approximation update and depends on a step-size sequence $\{\gamma_{n}\}_{n\geq 0}$ which is chosen in the same way as in \cite[Section~$3.2$]{lecorff:fort:2011}.  Figure~\ref{BOEMsupp:fig:quantilex2} displays the empirical median and first and last quartiles for the estimation of $x_{2}$ with both averaged algorithms as a function of the number of observations. These estimates are obtained over $100$ independent Monte Carlo runs with $\tau_{n} = n^{1.1}$ and $\gamma_{n} = n^{-0.53}$. Both algorithms converge to the true value $x_{2} = 2$ and these plots confirm the similar behavior of BOEM and the online EM of \cite{cappe:2011}.
  \begin{figure}[!h]
   \centering
   \subfloat[Estimation of $x_{2}$ with averaged BOEM.]{\includegraphics[width=0.5\textwidth]{./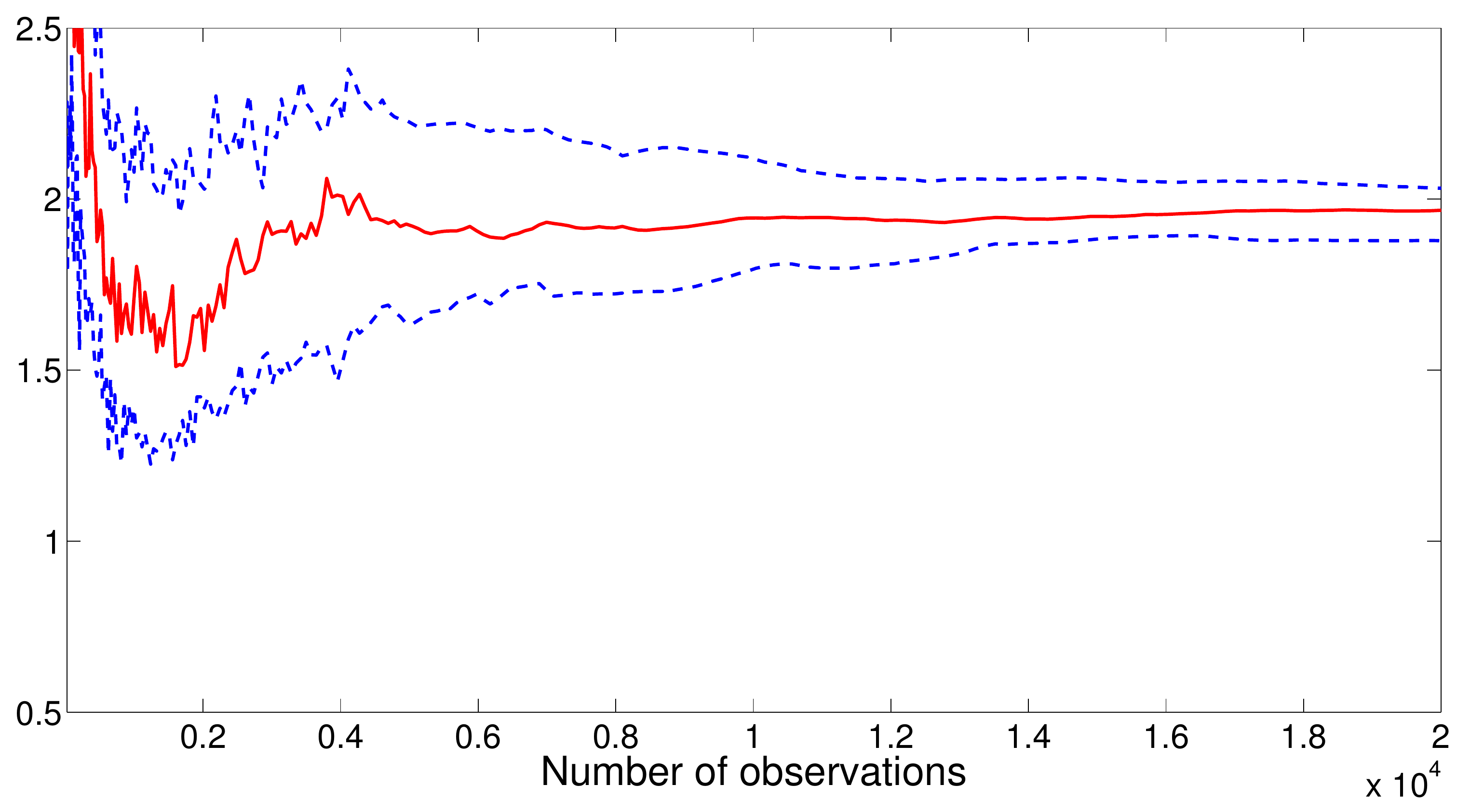}}
   \subfloat[Estimation of $x_{2}$ with averaged OEM.]{\includegraphics[width=0.5\textwidth]{./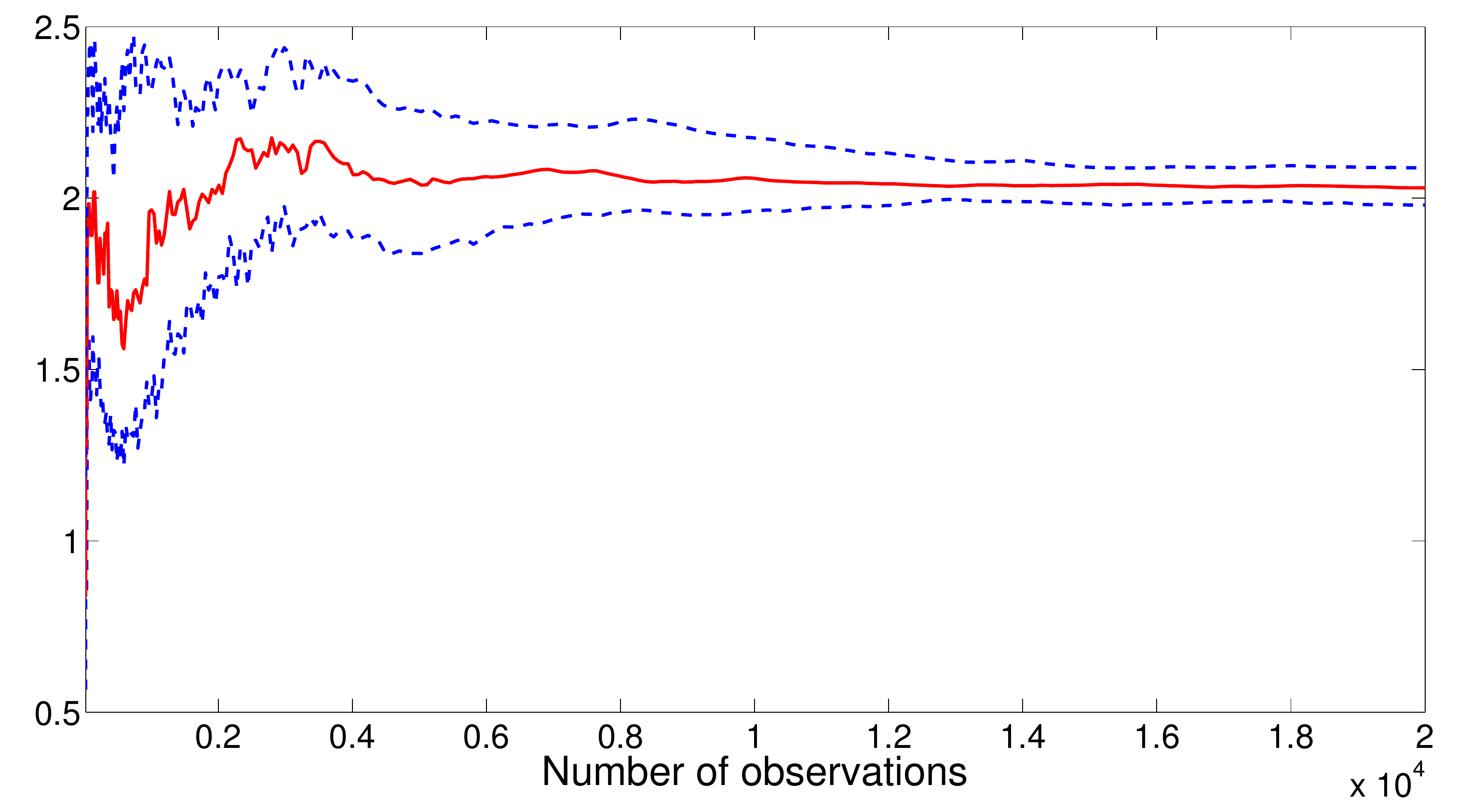}}
   \caption{Estimation of $x_{2}$ using the averaged online EM and averaged BOEM. Each plot displays the empirical median (bold line) and the first and last quartiles (dotted lines) over $100$ independent Monte Carlo runs with $\tau_{n} = n^{1.1}$ and $\gamma_{n} = n^{-0.53}$. The first ten observations are omitted for a better visibility.}
   \label{BOEMsupp:fig:quantilex2}
 \end{figure}
 
\subsubsection{Comparison to a recursive maximum likelihood procedure}
In the numerical applications below, we give supplementary graphs to compare the convergence of the averaged BOEM with the convergence of the Polyak-Ruppert averaged RML procedure. The experiment is the same as the one in \cite[Section~$3.2$]{lecorff:fort:2011}. Figure~\ref{BOEMsupp:fig:v} and \ref{BOEMsupp:fig:q12} displays the empirical median and first and last quartiles of the estimation of  $v$ and $m(1,2)$ over $100$ independent Monte Carlo runs. Both algorithms have a similar behavior for the estimation of these parameters.
  
\begin{figure}[!h]
   \centering
   \subfloat[Averaged BOEM.]{\includegraphics[width=0.5\textwidth]{./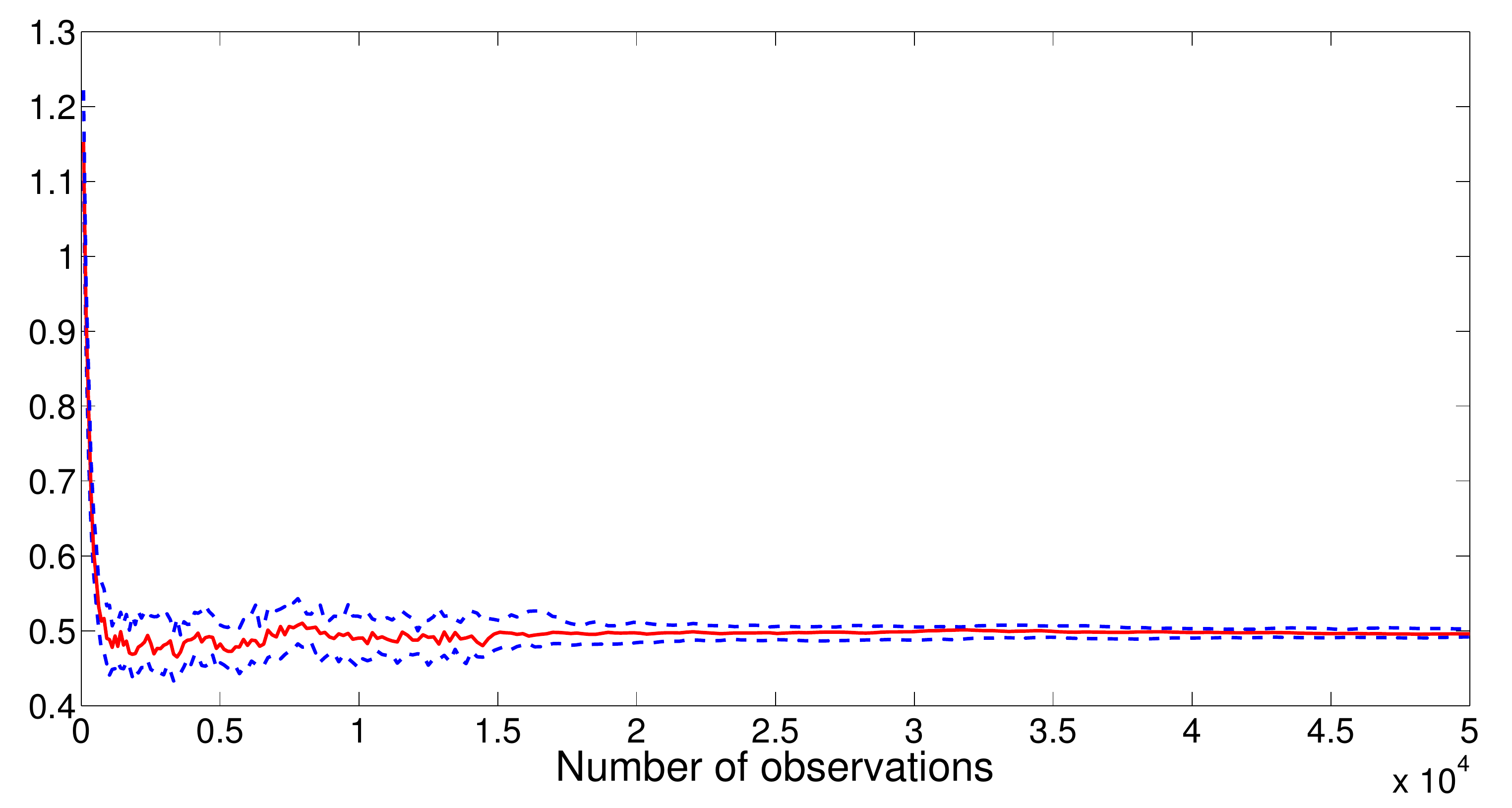}}
   \subfloat[Averaged RML.]{\includegraphics[width=0.5\textwidth]{./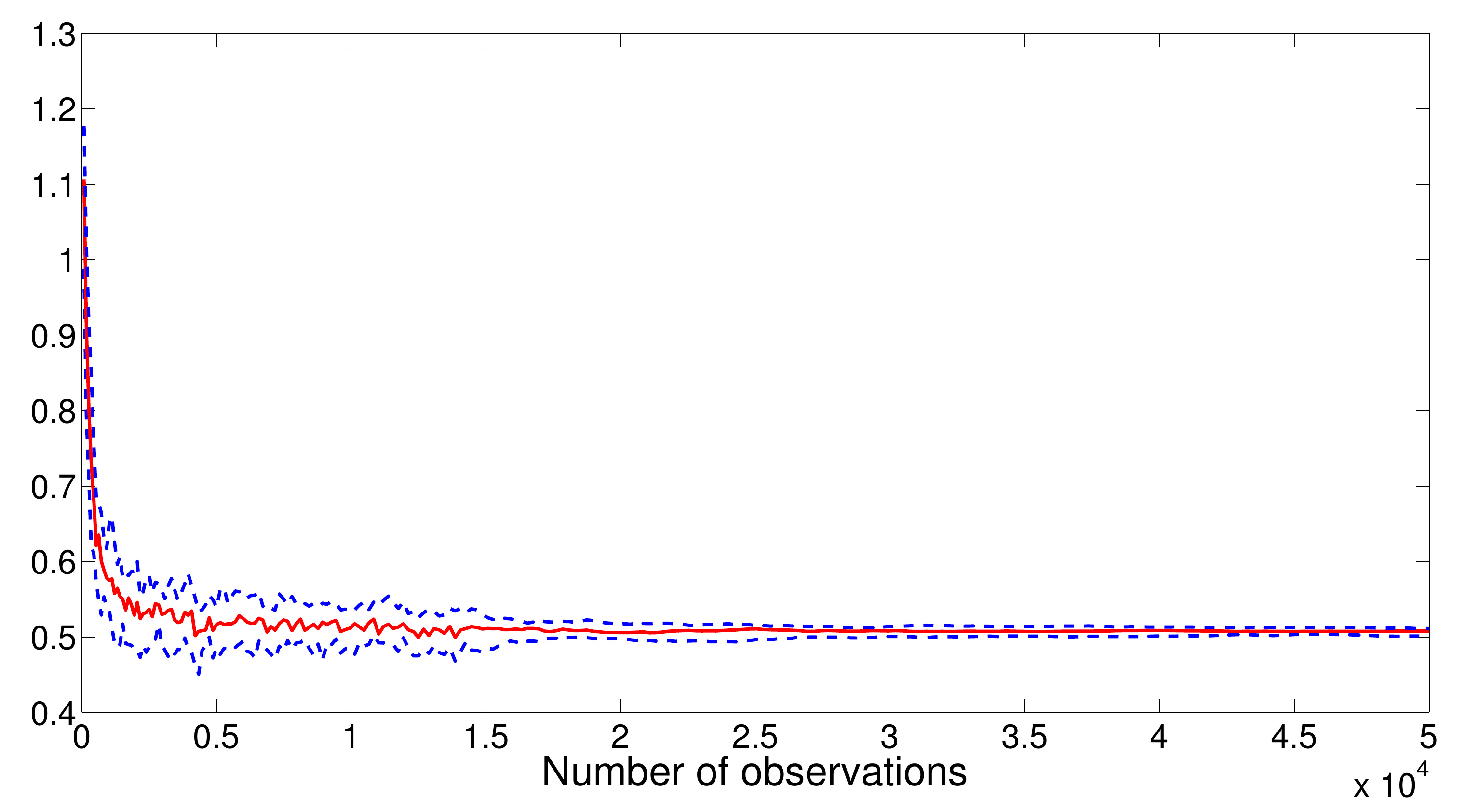}}
   \caption{Empirical median (bold line) and first and last quartiles (dotted line) for the estimation of $v$ using the averaged RML algorithm (right)  and the averaged BOEM algorithm (left). The true values is $v=0.5$ and the averaging procedure is starter after $10000$ observations. The first $10000$ observations are not displayed for a better clarity.}
   \label{BOEMsupp:fig:v}
 \end{figure}

\begin{figure}[!h]
   \centering
   \subfloat[Averaged BOEM.]{\includegraphics[width=0.5\textwidth]{./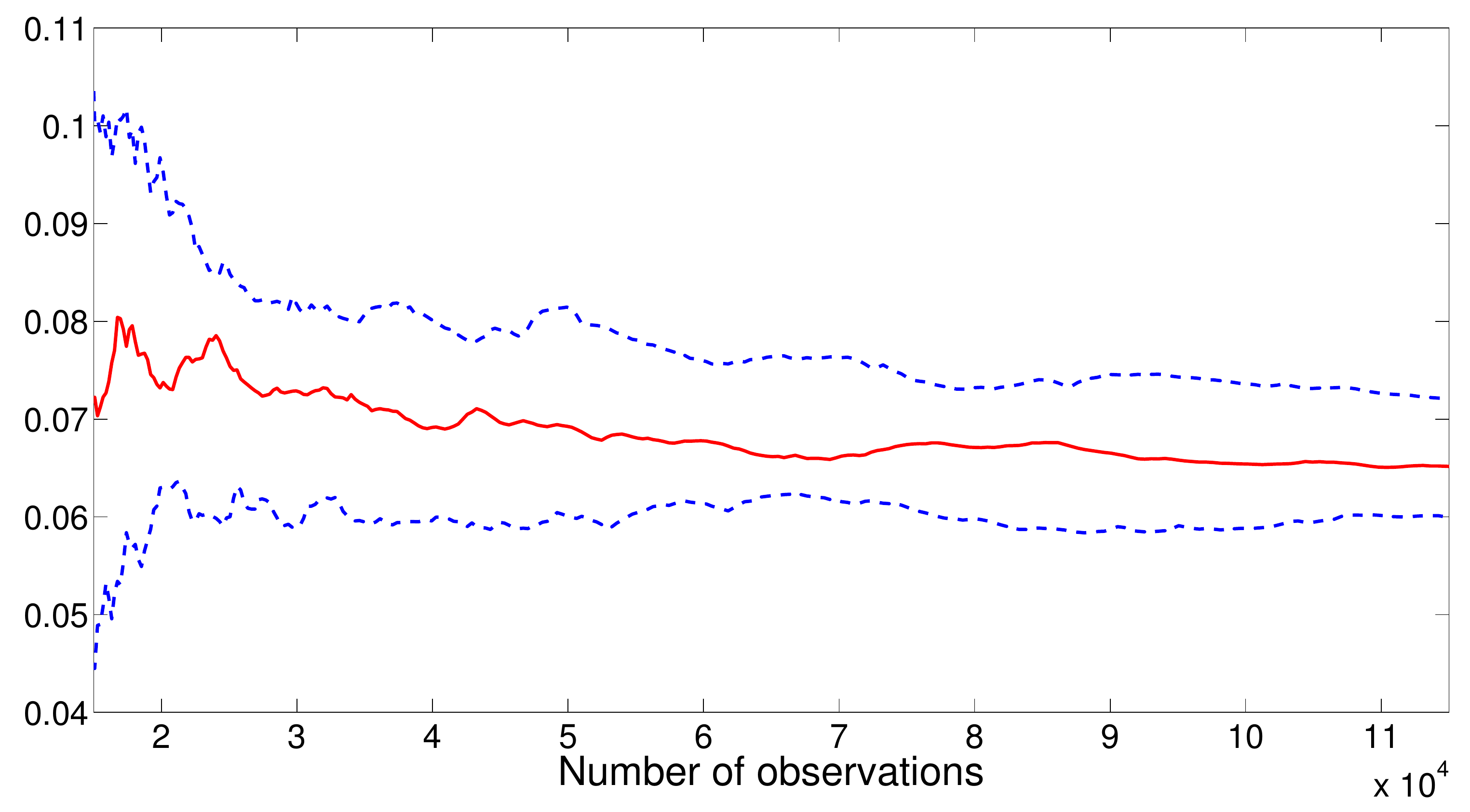}}
   \subfloat[Averaged RML.]{\includegraphics[width=0.5\textwidth]{./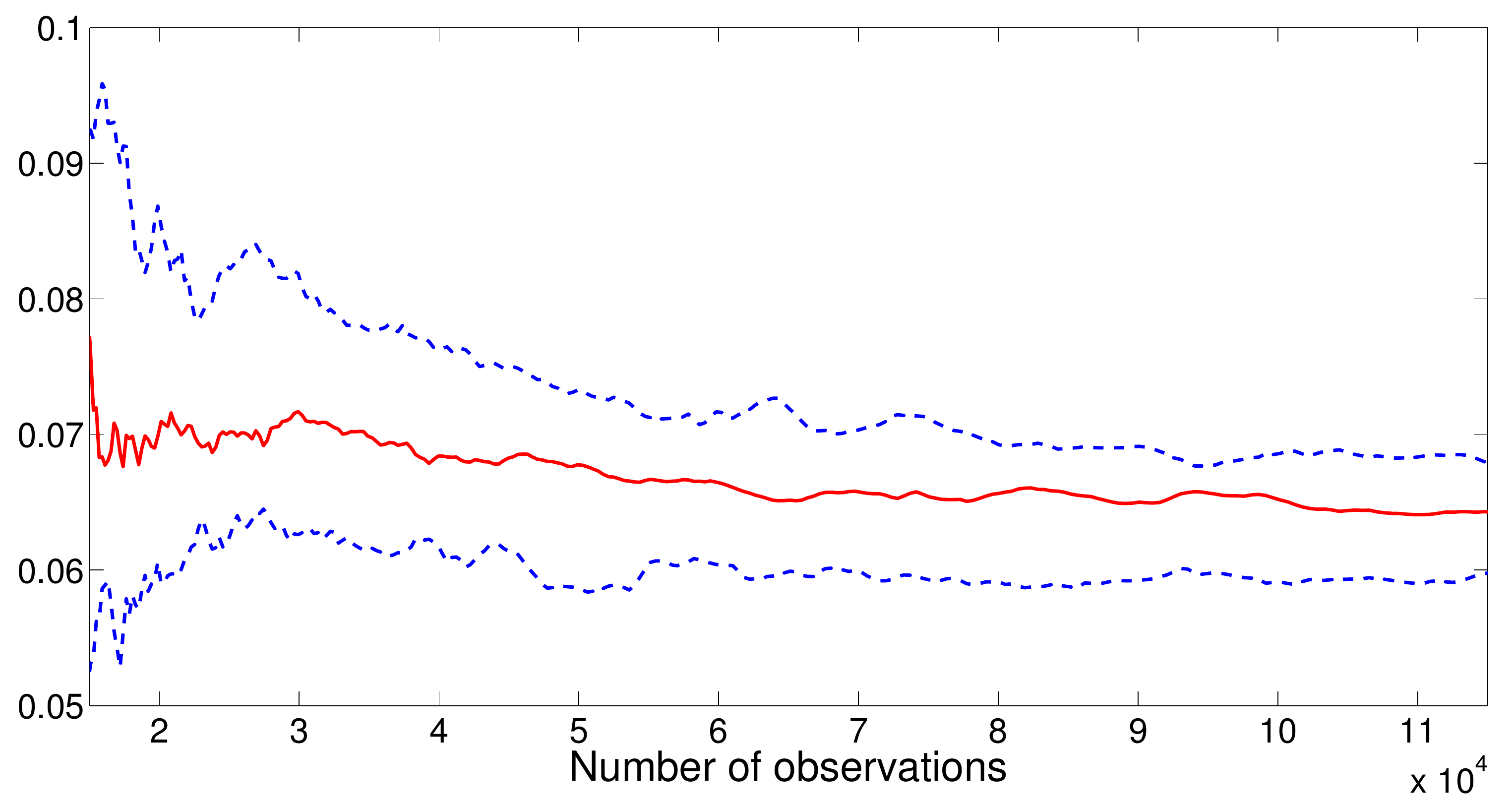}}
   \caption{Empirical median (bold line) and first and last quartiles (dotted line) for the estimation of $m(1,2)$ using the averaged RML algorithm (right)  and the averaged BOEM algorithm (left). The true values is $m(1,2)=0.05$ and the averaging procedure is starter after $10000$ observations. The first $10000$ observations are not displayed for a better clarity.}
   \label{BOEMsupp:fig:q12}
 \end{figure}
  
\subsection{Stochastic volatility model}
\label{BOEMsupp:sec:finiteSVM}
Consider the following stochastic volatility model:
\begin{equation*}
X_{t+1} = \phi X_t + \sigma U_{t}\eqsp, \qquad \qquad
Y_t = \beta \rme^{\frac{X_t}{2}} V_t\eqsp,
\end{equation*}
where $X_0\sim\mathcal{N}\left(0, (1-\phi^2)^{-1} \sigma^2\right)$ and
$(U_t)_{t\geq 0}$ and $(V_t)_{t\geq 0}$ are two sequences of i.i.d.  standard
Gaussian r.v., independent from $X_0$. Data are sampled using $\phi = 0.8$,
$\sigma^{2} = 0.2$ and $\beta^{2} = 1$. All runs are started with $\phi = 0.1$,
$\sigma^{2} = 0.6$ and $\beta^{2} = 2$.

In this model, the smoothed sufficient statistics $\{\bar
S_{\tau_n}^{\chi,T_{n-1}}(\param_{n-1}, \bfY)\}_{n\geq 1}$ can not be computed
explicitly. We thus propose to replace the exact computation by a Monte Carlo
approximation based on particle filtering.  The performance of the Stochastic
BOEM is compared to the online EM algorithm given in~\cite{cappe:2011} (see
also \cite{delmoral:doucet:singh:2010a}).  To our best knowledge, there do not
exist results on the asymptotic behavior of the algorithms
by~\cite{cappe:2011,delmoral:doucet:singh:2010a}; these algorithms rely on many
approximations that make the proof quite difficult (some insights on the
asymptotic behavior are given in~\cite{cappe:2011}). Despite there are no
results in the literature on the rate of convergence of the Online EM algorithm
by \cite{cappe:2011} we choose the step size $\gamma_n$ in \cite{cappe:2011}
and the block size
$\tau_n$ s.t.  $\gamma_n =n^{-0.6}$ and $\tau_n \propto n^{3/2}$ (see
\cite[Section~$3.2$]{lecorff:fort:2011} for a discussion on this choice).  $50$ particles are used for the
approximation of the filtering distribution by Particle filtering.  We report
in Figure~\ref{BOEMsupp:fig:boxSVM}, the boxplots for the estimation of the three
parameters $(\beta, \phi, \sigma^{2})$ for the Polyak-Ruppert
\cite{polyak:1990} averaged Online EM and the averaged BOEM.  Both average
versions are started after $20000$ observations. 
Figure~\ref{BOEMsupp:fig:boxSVM} displays the estimation of $\phi$, $\sigma^{2}$ and $\beta^{2}$.  This figure shows that both algorithms
have the same behavior. Similar conclusions are obtained by considering other
true values for $\phi$ (such as $\phi = 0.95$). Therefore, the intuition
  is that online EM and Stochastic BOEM have the same asymptotic  behavior.
  The main advantage of the second approach is that it relies on approximations
  which can be controlled in such a way that we are able to show that the
  limiting points of the particle version of the Stochastic BOEM algorithms are
  the stationary points of the limiting normalized log-likelihood of the
  observations.
    
\begin{figure}[!h]
  \centering
  \subfloat[Estimation of $\phi$.]{\label{BOEMsupp:fig:boxSVMphi}\includegraphics[width=0.7\textwidth]{./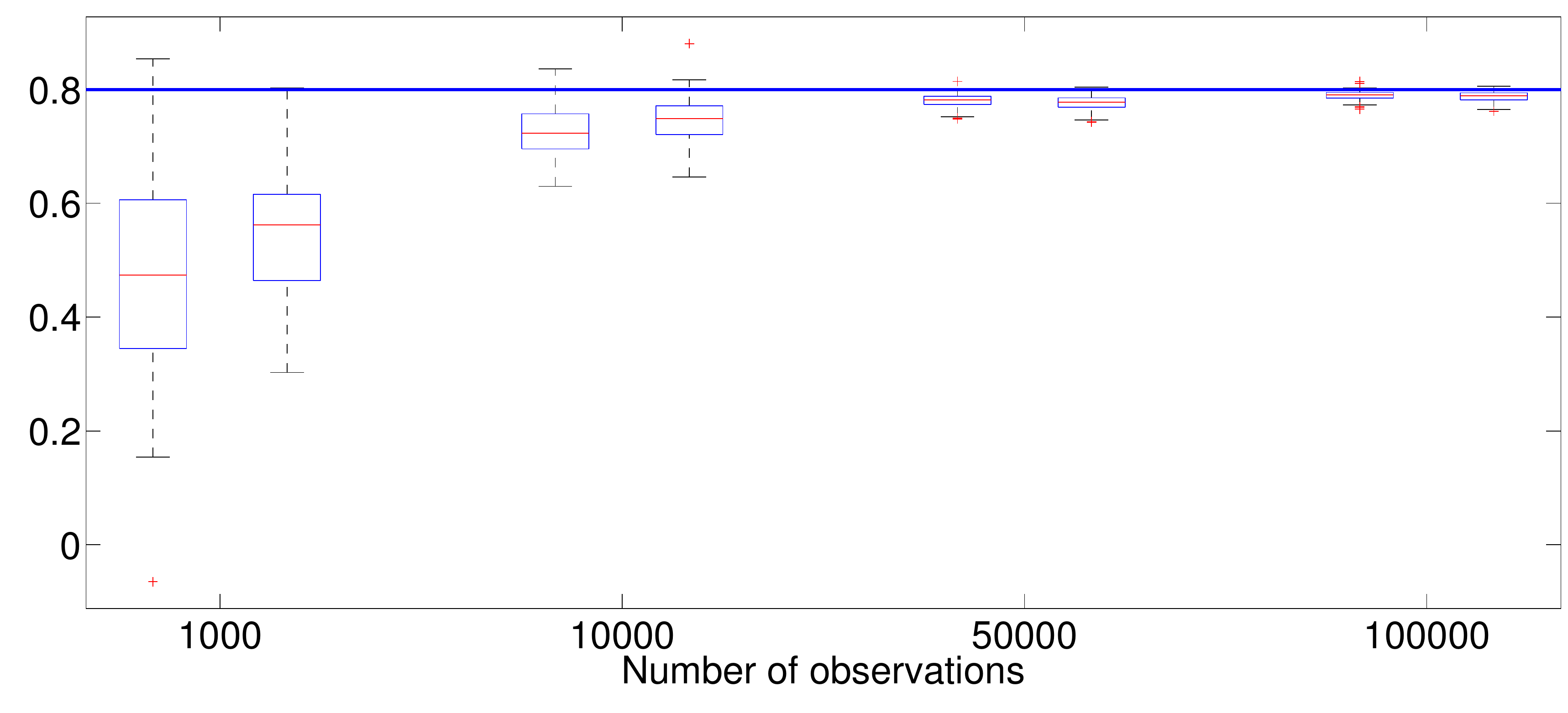}}\\
  \subfloat[Estimation of $\sigma^{2}$.]{\label{BOEMsupp:fig:boxSVMsigma2}\includegraphics[width=0.7\textwidth]{./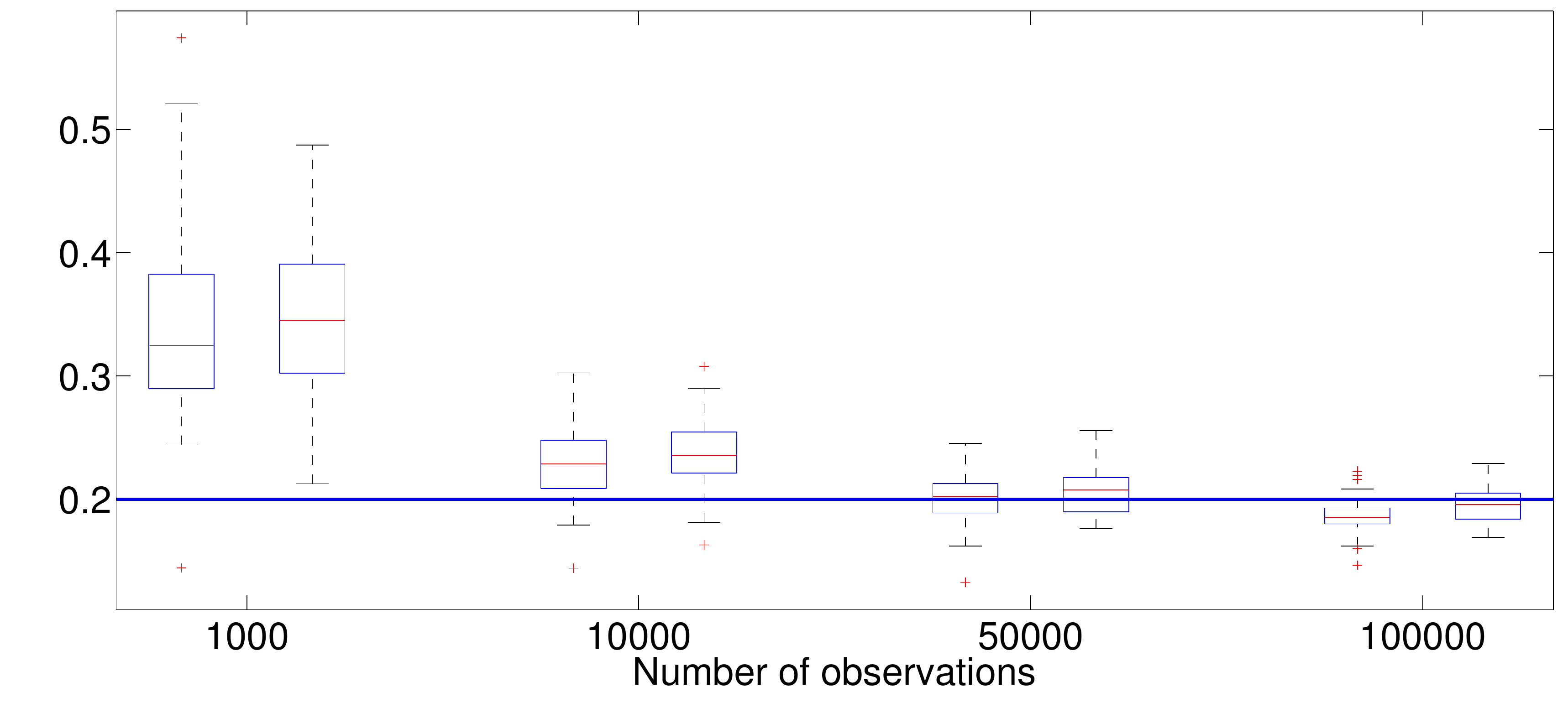}}\\
  \subfloat[Estimation of $\beta^{2}$.]{\label{BOEMsupp:fig:boxSVMbeta2}\includegraphics[width=0.7\textwidth]{./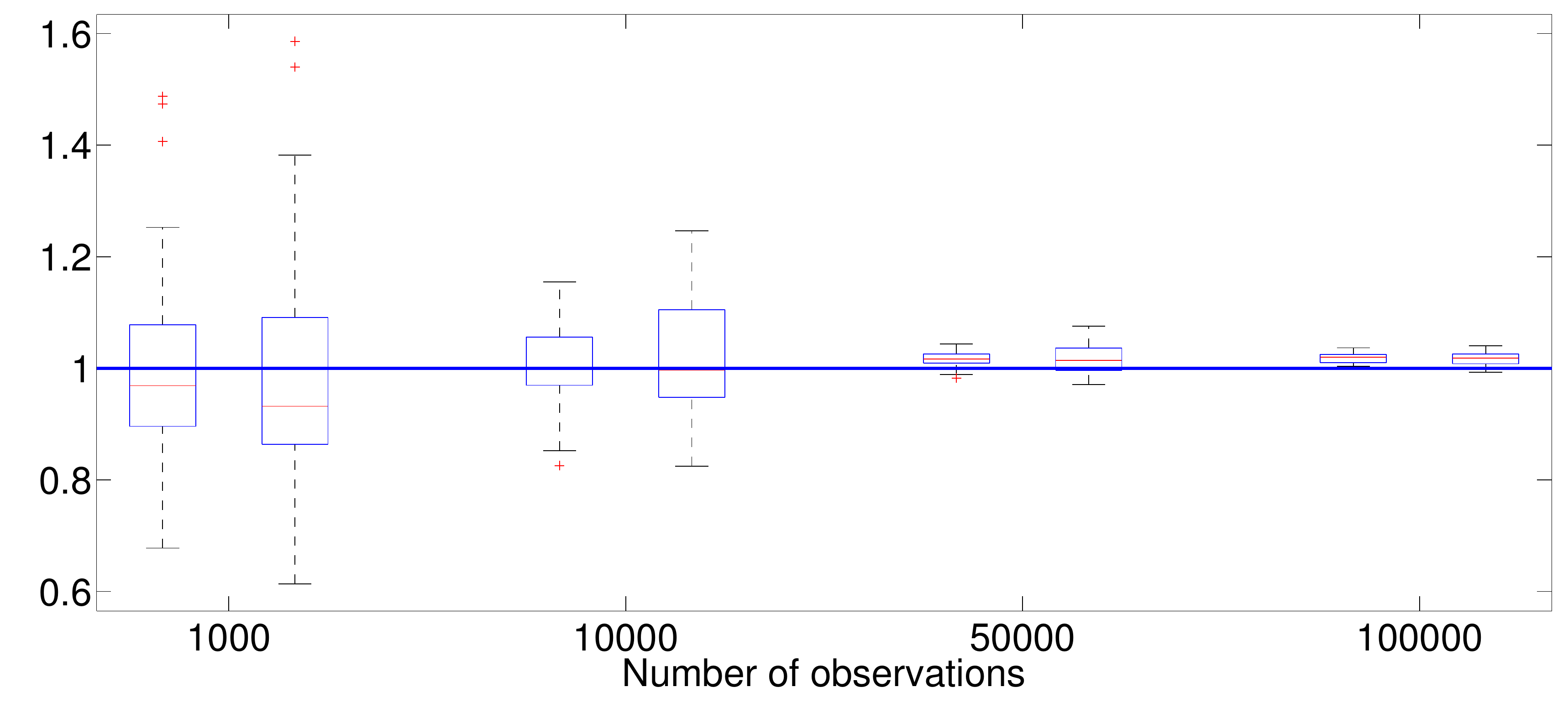}}
   \caption{Estimation of $\phi$, $\sigma^{2}$ and $\beta^{2}$ using the averaged online EM algorithm (left) and the averaged BOEM (right), after  $n=\{1000, 10k, 50k, 100k \}$ observations. The true value of $\phi$ is $0.8$.}
   \label{BOEMsupp:fig:boxSVM}
  \end{figure}
  
  We now compare the two algorithms when the true value of $\phi$ is (in
  absolute value) closer to $1$: we choose $\phi = 0.95$, $\beta^2$ and
  $\sigma^2$ being the same as in the previous experiment.
  
  As illustrated on Figure~\ref{BOEMsupp:fig:boxSVM2}, the same conclusions are
  drawn for greater values of $\phi$.
\begin{figure}[]
  \centering
 \label{fig:boxSVMphi}\includegraphics[width=0.8\textwidth]{./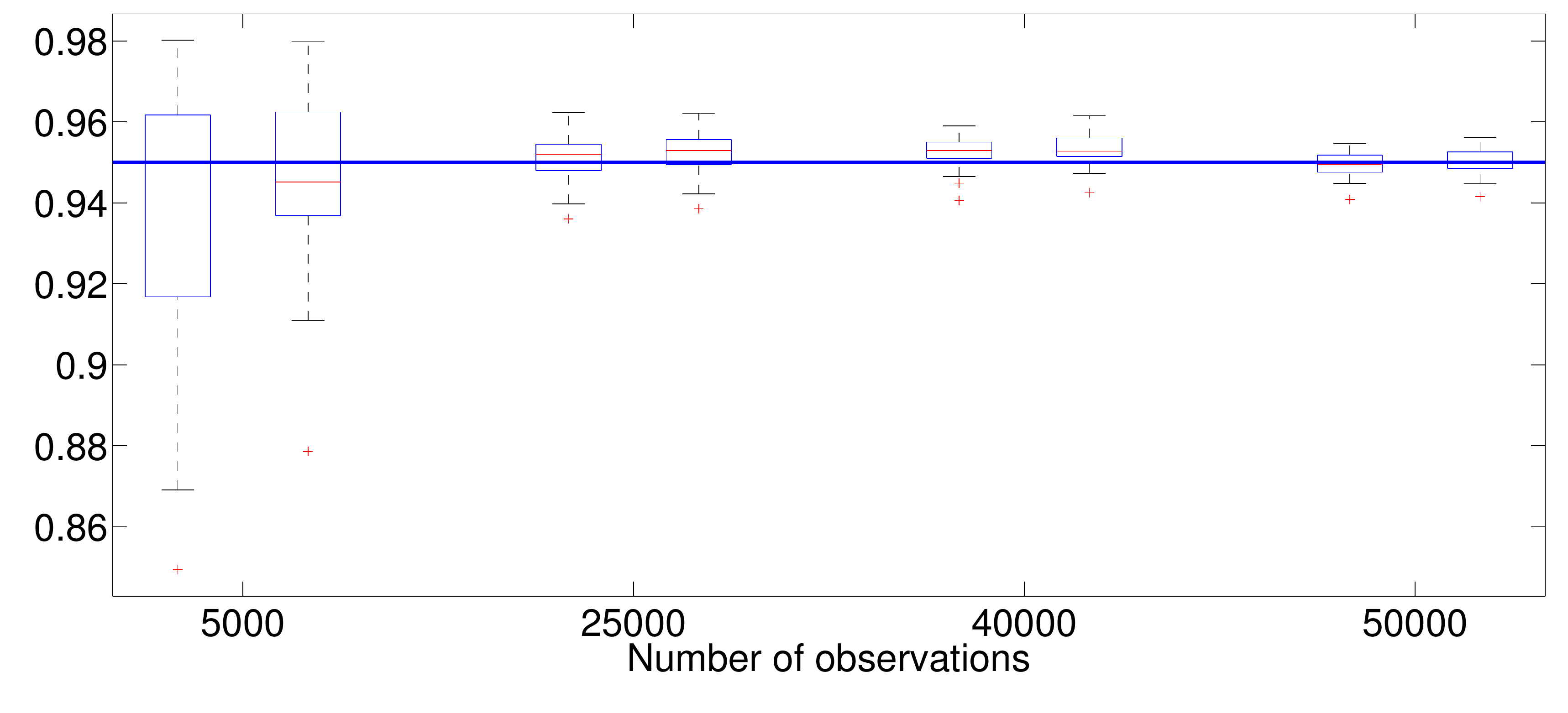}
   \caption{Estimation of $\phi$ using the averaged online EM algorithm (left) and the averaged BOEM algorithm (right), after $n=\{5k, 25k, 40k, 50k \}$ observations. The true value of $\phi$ is $0.95$.}
   \label{BOEMsupp:fig:boxSVM2}
  \end{figure}

\bibliographystyle{plain} 
\bibliography{./onlineblock}
\end{document}